\newlist{compactenum}{enumerate}{4}
\setlist[compactenum,1]{nolistsep}
\title{Uniqueness of ad-invariant metrics}
\author{Diego Conti, Viviana del Barco and Federico A. Rossi}
\def\namedlabel#1#2{\begingroup
   \def\@currentlabel{#2}%
   \label{#1}\endgroup
}
\newtheorem{theorem}{Theorem}[section]
\newtheorem{lemma}[theorem]{Lemma}
\newtheorem{corollary}[theorem]{Corollary}
\newtheorem{proposition}[theorem]{Proposition}
\theoremstyle{definition}
\newtheorem{definition}[theorem]{Definition}
\newtheorem{example}[theorem]{Example}
\theoremstyle{remark}
\newtheorem{remark}[theorem]{Remark}
\newcommand{\R}{\mathbb{R}}
\newcommand{\im}{\mathrm{Im}\,}         
\newcommand{\lie}[1]{\mathfrak{#1}}     
\newcommand{\g}{\lie{g}}
\newcommand{\Z}{\mathbb{Z}}
\newcommand{\N}{\mathbb{N}}
\newcommand{\C}{\mathbb{C}}
\newcommand{\K}{\mathbb{K}}            
\newcommand{\hook}{\lrcorner\,}
\newcommand{\CO}{\mathrm{CO}}
\newcommand{\so}{\mathfrak{so}}
\newcommand{\co}{\mathfrak{co}}
\newcommand{\su}{\mathfrak{su}}
\newcommand{\id}{\mathrm{Id}}   
\DeclareMathOperator{\Ann}{Ann}
\newcommand{\gl}{\lie{gl}}
\newcommand{\Span}[1]{\operatorname{Span}\left\{#1\right\}}
\newcommand{\tran}[1]{\hspace{.2mm}\prescript{t\hspace{-.5mm}}{}{#1}}
\DeclareMathOperator{\ric}{ric}  
\DeclareMathOperator{\End}{End}
\DeclareMathOperator{\Aut}{Aut}
\DeclareMathOperator{\diag}{diag}
\DeclareMathOperator{\Der}{Der}
\DeclareMathOperator{\ad}{ad}
\DeclareMathOperator{\Tr}{tr}
\newcolumntype{C}{>{$}c<{$}}
\newcolumntype{L}{>{$}l<{$}}
\newcolumntype{R}{>{$}r<{$}}
\newcommand{\st}{\;\mid\;}          
\newcommand{\mm}{\mathfrak{m}}
\newcommand{\mg}{\mathfrak{g}}
\newcommand{\mb}{\mathfrak{b}}
\newcommand{\mq}{\mathfrak{q}}
\newcommand{\md}{\mathfrak{d}}
\newcommand{\mn}{\mathfrak{n}}
\newcommand{\mz}{\mathfrak{z}}
\newcommand{\mk}{\mathfrak{k}}
\newcommand{\ms}{\mathfrak{s}}
\newcommand{\mh}{\mathfrak{h}}
\newcommand{\tmg}{\tilde{\mathfrak{g}}}
\newcommand{\rigid}{solitary\xspace}
\newcommand{\adsolitary}{$T^*$-solitary\xspace}
\newcommand{\weaklyrigid}{weakly solitary\xspace}
\newcounter{rowno}
\begin{document}
\maketitle
\begin{abstract}
We consider Lie algebras admitting an ad-invariant metric, and we study the problem of uniqueness of the ad-invariant metric up to automorphisms. This is a common feature in low dimensions, as one can observe in the known classification of nilpotent Lie algebras of dimension $\leq 7$ admitting an ad-invariant metric. We prove that uniqueness of the metric on a complex Lie algebra $\g$ is equivalent to uniqueness of ad-invariant metrics on the cotangent Lie algebra $T^*\g$; a slightly more complicated equivalence holds over the reals. This motivates us to study the broader class of Lie algebras such that the ad-invariant metric on $T^*\g$  is unique.

We prove that uniqueness of the metric forces the Lie algebra to be solvable, but the converse does not hold, as we show by constructing solvable Lie algebras with a one-parameter family of inequivalent ad-invariant metrics. We prove sufficient conditions for uniqueness expressed in terms of both the Nikolayevsky derivation and a metric counterpart  introduced in this paper.

Moreover, we prove that uniqueness always holds for irreducible Lie algebras which are either solvable  of dimension $\leq 6$ or real nilpotent of dimension $\leq 10$.
\end{abstract}

\renewcommand{\thefootnote}{\fnsymbol{footnote}}
\footnotetext{\emph{MSC class 2020}: \emph{Primary} 53C30; \emph{Secondary} 17B30, 17B40, 53C50}
\footnotetext{\emph{Keywords}: ad-invariant metric, Lie algebras, orbits of the group of automorphisms}
\renewcommand{\thefootnote}{\arabic{footnote}}

\section*{Introduction}
Bi-invariant metrics on Lie groups have gained relevance in differential geometry, in particular because of their relation to the study of naturally reductive spaces \cite{Kos56,Ov11} and the classification of symmetric spaces \cite{KO08}. Recently, it was proved in \cite{Bagl} that all compact pseudo-Riemannian solvmanifolds arise from quotients of solvable Lie groups carrying bi-invariant metrics; an explicit construction of such quotients is carried out in \cite{Ka20,MR85}.

The algebraic counterpart of bi-invariant metrics on Lie groups are \emph{ad-invariant metrics} on Lie algebras. Recall that an ad-invariant metric $g$ on a Lie algebra $\mg$ is a symmetric and nondegenerate bilinear form satisfying
\begin{equation}\label{eq:adinvdef}
g([X,Y],Z)=-g(Y,[X,Z]),\quad \text{ for all } X,Y,Z\in\g.
\end{equation}
Every Lie group carrying a Riemannian bi-invariant metric is a product of a compact Lie group with an abelian factor. However, indefinite metrics appear more generally, as in the case of bi-invariant metrics on solvable Lie groups, or those induced by the Killing form on any semisimple Lie algebra.

Remarkably, the study of ad-invariant metrics on Lie algebras has been developed independently of bi-invariant metrics, motivated by applications in mathematics and physics.
In the more general setting of associative algebras over a field $\K$, the notion of ad-invariant metrics is instrumental in the theory of modular representation of finite groups \cite[Chapter 2]{Karpi90}. In physics, they appear in the well known Adler-Kostant-Symes construction in Hamiltonian systems and also in conformal field theory \cite{FiSt96}. Further applications can be found in the survey articles \cite{Bo97,Ovandosurvey}. Along this paper, we will be interested in the cases $\K=\R$, $\K=\C$.

Determining whether an ad-invariant metric exists on a fixed Lie algebra is a straightforward linear computation. However, the general classification of Lie algebras admitting an ad-invariant metric is a wide open problem, as is clear from \cite{Ovandosurvey}, though classifications exist for several restricted families of Lie algebras \cite{BaumKath,dBO12, FavreSantharoubane:adInvariant,Kath:NilpotentSmallDim,KathOlbrich:Metric}.
A central ingredient of \cite{BaumKath, FavreSantharoubane:adInvariant}  is the well-known double extension procedure introduced by Medina and Revoy \cite{MedinaRevoy}, which constructs a Lie algebra structure and an ad-invariant metric on the vector space obtained from the direct sum of a Lie algebra $(\md,g)$ carrying an ad-invariant metric $g$ and the cotangent of a Lie algebra acting on $\md$. Notably, they show that every irreducible Lie algebra with an ad-invariant metric is obtained in this way, except simple Lie algebras and $\R$. However, the double extension generally depends on a set of parameters, and different choices of $(\md,g)$ can yield isometric double extensions, making the classification quite hard.

A transversal line of research is the identification of Lie algebras admitting a \emph{unique} ad-invariant metric. This problem appears more scarcely in the literature, but in two different flavours. In \cite{BaBe07}, the uniqueness of ad-invariant metrics on a fixed Lie algebra is measured by the dimension of the linear space spanned by all invariant nondegenerate symmetric bilinear forms. By contrast, uniqueness up to Lie algebra automorphisms and scaling is considered in \cite{MeRe93}, where it is shown to hold under the hypothesis that every ad-invariant map is a linear combination of the identity and a derivation.

In this paper we pursue this latter point of view, and study Lie algebras such that the group of automorphisms acts transitively on the space of ad-invariant metrics. We introduce an infinitesimal characterization of this condition; precisely, we define a metric to be \emph{solitary} if every self-adjoint ad-invariant map can be written as the self-adjoint part of a derivation of the Lie algebra. We prove that if an ad-invariant metric on a Lie algebra $\g$ is solitary, then so are all ad-invariant metrics on $\g$ (Proposition~\ref{prop:rigid_independent}); we then say that the Lie algebra is solitary. In addition we show that if the Lie algebra is irreducible, being solitary is equivalent to admitting a unique ad-invariant metric up to automorphisms (and sign, if $\K=\R$). In particular, the signature of the ad-invariant metric is uniquely determined up to sign (Theorem~\ref{thm:rigidimpliesuniqueness}).

We also consider the situation in which the group of automorphisms acts on the space of ad-invariant metrics with cohomogeneity one; we show that these Lie algebras admit exactly one or two ad-invariant metrics up to automorphisms and a scalar factor, according to whether all derivations are traceless or not. In particular, we recover the uniqueness result of \cite{MeRe93}. We note that  simple Lie algebras, whilst not solitary, satisfy this weaker form of uniqueness.

The solitary property behaves well with respect to constructions. We show that a reducible Lie algebra is solitary if and only if its factors are solitary, the complexification $\g^\C$ of a Lie algebra $\g$ is solitary if and only if so is $\g$, and a complex Lie algebra $\g$ is solitary if and only if  the underlying real Lie algebra $\g^\R$ is solitary. A similar property holds for cotangents. Recall that the cotangent of \emph{any} Lie algebra admits an ad-invariant metric; a key observation for our work is that a Lie algebra admitting an ad-invariant metric is solitary if and only if its cotangent is solitary too (Theorem~\ref{thm:rigiffTrig}).

This motivates the study of Lie algebras whose cotangent is solitary, which we call $T^*$-solitary. We show that every $2$-step nilpotent Lie algebra and every Riemannian nilsoliton is $T^*$-solitary, although not all of them admit ad-invariant metrics. This concept, in combination with the double extension characterization, allows us to prove one of the main results of the paper: solitary and $T^*$-solitary Lie algebras are necessarily solvable (see Theorem~\ref{thm:solitaryimpliessolvable}). As a consequence, uniqueness (up to automorphisms and sign) of ad-invariant metrics only occurs within the class of solvable Lie algebras. However, we show with examples that there exist solvable Lie algebras admitting a one parameter family of ad-invariant metrics up to automorphisms and sign.

In order to give sufficient conditions for a Lie algebras admitting ad-invariant metrics to be solitary, we use gradings of the Lie algebra behaving well with respect to the metric. Such a grading can be defined, in particular, by a canonical derivation associated to any Lie algebra endowed with a nondegenerate scalar product (not necessarily ad-invariant). The existence of this derivation is proved in Theorem~\ref{thm:metricnik} by adapting to the metric context the techniques in the celebrated work of Nikolayevsky \cite{Nikolayevsky}; accordingly, we call it the \emph{metric Nikolayevsky derivation}. In some cases this derivation turns out to coincide with the (usual) Nikolayevsky derivation. We prove in Theorem~\ref{thm:uniquenesstheorem} that an ad-invariant metric whose metric Nikolayevsky derivation has positive eigenvalues is solitary. Whilst this only occurs in the presence of a positive grading, which forces the Lie algebra to be nilpotent, a more general result holds in the case of nonnegative eigenvalues, under the assumption that the zero eigenspace is $T^*$-solitary. Beyond the ad-invariant context, we expect this general construction to find applications in other areas of homogeneous pseudo-Riemannian geometry.

As an application of the above results, we obtain that many low dimensional solvable Lie algebras are $T^*$-solitary. Precisely, we prove that every solvable Lie algebra up to dimension $6$ and every nilpotent Lie algebra up to dimension $10$ is $T^*$-solitary.  In both cases we use an inductive argument on non $T^*$-solitary Lie algebras in combination with: on the one hand, the fact that nilpotent Lie algebras of dimension $\leq 10$ admitting ad-invariant metrics (classified in \cite{Kath:NilpotentSmallDim}) are \emph{nice} in the sense of \cite{LauretWill:EinsteinSolvmanifolds,Nikolayevsky} and thus their metric Nikolayevsky derivations can be easily computed; on the other hand, the description of solvable Lie algebras of dimension $\leq 6$ as double extensions of abelian Lie algebras.

\smallskip

A brief account of the organization of the paper is as follows: Section~\ref{sec:Decomposable} sets notation, and reviews properties of the decomposition of Lie algebras admitting ad-invariant metrics. Section~\ref{sec:uniqueness} introduces the problem  of uniqueness of ad-invariant metrics and relates it to self-adjoint ad-invariant maps, whose properties are also studied. In addition, it introduces the definition of (weakly) solitary metrics, and settles their link with the orbits of the action of the automorphisms group on the set of ad-invariant metrics.
The relation between the solitary condition for a real Lie algebra and its complexification is established in Section~\ref{sec:complexification}. Section~\ref{sec:Canonical} introduces the gradings of our interest for Lie algebras with nondegenerate scalar products, and contains the construction of the metric Nikolayevsky derivation. Section~\ref{sec:Cotangents} deals with cotangent Lie algebras endowed with their canonical ad-invariant metric and considers the solitary condition on them, introducing the concept of $T^*$-solitary Lie algebras. In Section~\ref{sec:DE} we use the double extension procedure, in combination with the results in Section~\ref{sec:Cotangents}, to show that $T^*$-solitary Lie algebras are always solvable. Moreover, we prove that in particular cases the eigenspaces of the metric Nikolayevsky derivation provide a description of the Lie algebra as a special double extension. Section~\ref{sec:lowdim} contains the study of the $T^*$-solitary condition in low dimension.

\medskip

\noindent \textbf{Acknowledgments:} The authors express their gratitude to Said Benayadi for pointing out relevant references on the subject. D.~Conti and F.A.~Rossi acknowledge GNSAGA of INdAM. F.A.~Rossi also acknowledges the Young Talents Award of Universit\`{a} degli Studi di Milano-Bicocca joint with Accademia Nazionale dei Lincei.

\section{Lie algebras with ad-invariant metrics}\label{sec:Decomposable}
This section aims to fix notation and establish some elementary facts about the decomposition of Lie algebras endowed with ad-invariant metrics; notice that similar results have been proved in \cite{As79,Bo97,ZhZh01}, but not exactly in the form that will be  needed in the rest of this paper; for this reason, we will give an almost self-contained account rather than refer to existing results.

We define a \emph{metric Lie algebra} as a pair $(\mg,g)$, where $\mg$ is a finite-dimensional Lie algebra and $g$ is a metric on $\mg$, i.e. a nondegenerate scalar product. The metric $g$ is said to be \emph{ad-invariant} if~\eqref{eq:adinvdef} holds; notice that for some authors this condition is part of the definition of metric Lie algebra.

Fix a metric Lie algebra $(\mg,g)$, with $g$ ad-invariant metric. Given a subspace $\mm\subset \mg$, we denote by $\mm^\bot$ the orthogonal of $\mm$ with respect to $g$, namely,
\[
\mm^\bot=\{X\in \mg \st g(X,Y)=0, \text{ for all } Y\in \mg\}.
\]
A subspace $\mm\subset \mg$ is called nondegenerate if $\mm\cap \mm^\bot=\{0\}$. Using the ad-invariance of the metric~\eqref{eq:adinvdef}, one can easily check that if $\mh$ is an ideal of $\mg$, then $\mh^\bot$ is also an ideal, and $\mh$ is nondegenerate if and only if so is $\mh^\bot$. Moreover, if $\mz(\mg)$ denotes the center of $\mg$ and $\mg'$ denotes the commutator of $\mg$, then it is straightforward to check that $(\mg')^\bot=\mz(\mg)$.

Recall that a Lie algebra $\mg$ is called \emph{reducible} if it can be written as a direct sum of two nontrivial ideals, $\mg=\mh\oplus \mk$, and \emph{irreducible} otherwise. If $\mg$ is endowed with an ad-invariant metric $g$, then we say that $(\mg,g)$ is \emph{reducible as a metric Lie algebra} if it can be written as the sum of two nondegenerate orthogonal ideals, namely, $\mg=\mh\oplus \mh^\bot$, and \emph{irreducible as a metric Lie algebra} otherwise.

Notice that $(\mg,g)$ is irreducible as a metric Lie algebra if and only if every ideal strictly contained in $\mg$ is degenerate.

It is clear that if $\mg$ is irreducible, then $(\mg,g)$ is irreducible as a metric Lie algebra for every ad-invariant metric $g$. We shall study the converse.

\begin{remark}\label{rem:centreduc}
Notice that every nonabelian Lie algebra $\mg$ whose center $\mz(\mg)$ is not contained in the commutator $\mg'$ is reducible. Indeed, if $\mm$ is a complement of $\mg'\cap \mz(\mg)$ inside $\mz(\mg)$, then $\mm\cap \mg'=\{0\}$ and thus there is a complementary subspace $\tmg$ of $\mg$ such that $\mg=\mm\oplus\tmg$ and $\mg'\subset \tmg$. As a consequence, $\mm$ and $\tmg$ are nontrivial ideals of $\mg$.
\end{remark}

We shall prove that a similar reduction can be obtained for Lie algebras with ad-invariant metrics whose center is not contained in the commutator (see also \cite[Remark 1]{dBOV}).

\begin{lemma}\label{lem:mplustildeg}
Let $\mg$ be a nonabelian Lie algebra with an ad-invariant metric $g$. If the center of $\mg$ is not contained in its commutator, then $(\mg,g)$ is reducible as a metric Lie algebra as $\mg=\mm\oplus\tmg$, where $\mm$ is an abelian ideal and the center of $\tilde\mg$ is contained in its commutator.
\end{lemma}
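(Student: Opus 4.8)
The plan is to refine the purely Lie-theoretic reduction of Remark~\ref{rem:centreduc} so that the abelian ideal $\mm$ is in addition nondegenerate for $g$. The guiding observation is that the ``degenerate part'' of the center is exactly $\mz(\mg)\cap\mg'$. First I would record the orthogonality dictionary: ad-invariance gives $(\mg')^\bot=\mz(\mg)$, and since $g$ is nondegenerate, taking orthogonals yields $\mz(\mg)^\bot=\mg'$. Restricting $g$ to the center, its radical is $\mz(\mg)\cap\mz(\mg)^\bot=\mz(\mg)\cap\mg'$, which I call $W$; by hypothesis $\mz(\mg)\not\subset\mg'$, so $W$ is a proper subspace of $\mz(\mg)$.

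Next I would pick any complement $\mm$ of $W$ in $\mz(\mg)$, so that $\mz(\mg)=W\oplus\mm$. The key point, and the place where the metric hypothesis really enters, is that a complement of the radical of a symmetric bilinear form is automatically nondegenerate: if $m\in\mm$ satisfies $g(m,\mm)=0$, then since $g(m,W)=0$ as well (because $W$ is the radical of $g|_{\mz(\mg)}$), we obtain $g(m,\mz(\mg))=0$, whence $m\in W\cap\mm=\{0\}$. Thus $g$ restricts to a nondegenerate form on $\mm$, i.e. $\mm$ is a nondegenerate subspace of $\mg$. As $\mm\subset\mz(\mg)$ it is an abelian ideal, and being nondegenerate it produces the metric splitting $\mg=\mm\oplus\tmg$ with $\tmg:=\mm^\bot$ a nondegenerate ideal, by the elementary facts recalled before the lemma; note $\tmg\supset\mg'\neq\{0\}$, so the decomposition is genuine.

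It then remains to verify that $\mz(\tmg)\subset\tmg'$. Since $\mm$ is central, $[\mm,\tmg]=0$, so $\mg'=[\tmg,\tmg]=\tmg'$, and moreover $\mz(\tmg)=\mz(\mg)\cap\tmg$. Taking $z\in\mz(\tmg)$ and writing $z=w+m$ with $w\in W$ and $m\in\mm$ according to $\mz(\mg)=W\oplus\mm$, I would use $W\subset\mg'=\tmg'\subset\tmg$ to deduce $w\in\tmg$, hence $m=z-w\in\mm\cap\tmg=\{0\}$; therefore $z=w\in\mg'=\tmg'$, as desired.

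I expect the only real obstacle to be the nondegeneracy of $\mm$: the arbitrary complement used in Remark~\ref{rem:centreduc} need not be nondegenerate, and it is precisely the identification of $\mz(\mg)\cap\mg'$ as the radical of $g|_{\mz(\mg)}$ that guarantees a suitable choice exists. Everything else reduces to centrality of $\mm$ together with the orthogonality correspondence for ideals.
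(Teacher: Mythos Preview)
Your proof is correct and follows essentially the same approach as the paper: both choose $\mm$ as a complement of $W=\mz(\mg)\cap\mg'$ inside $\mz(\mg)$, prove $\mm$ is nondegenerate via $\mm\perp W$ (you phrase this as $W$ being the radical of $g|_{\mz(\mg)}$, the paper via $\mz(\mg)\perp\mg'$), and set $\tmg=\mm^\bot$. Your verification that $\mz(\tmg)\subset\tmg'$ is a bit more explicit than the paper's one-line ``it is clear that $\mz(\tilde\mg)=\mg'\cap\mz(\mg)\subset\mg'=\tmg'$'', but the content is identical.
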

\begin{proof}

Let $\mm$ be an arbitrary complement of $\mg'\cap \mz(\mg)$ inside $\mz(\mg)$, so that
\begin{equation}\label{eq:mgz}
\mz(\mg)=\mm\oplus (\mg'\cap \mz(\mg));
\end{equation}
since $\mz(\mg)$ is not contained in $\mg'$, $\mm$ is not trivial. As $\mm$ is contained in $\mz(\mg)$, which is orthogonal to $\mg'$, we have that $\mm\bot (\mg'\cap \mz(\mg))$. It follows that $\mm$ is nondegenerate; indeed, if $X\in \mm$ is orthogonal to $\mm$, then it is orthogonal to all of $\mz(\g)$, and therefore an element of $\mg'$.

Being a subset of the center, $\mm$ is an ideal, and thus $\tilde\mg=\mm^\bot$ is also a nondegenerate ideal. It is clear that $\mz(\tilde \mg)=\mg'\cap \mz(\mg)\subset \mg'= \tmg'$.
\end{proof}

\begin{corollary}\label{cor:reductive}
Let $\mg$ be a reductive Lie algebra. Then for every ad-invariant metric $g$, $\mg$ can be written as an orthogonal direct sum $\mg=\mg'\oplus\mz(\mg)$.
\end{corollary}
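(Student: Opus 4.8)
The plan is to combine the general identity $(\mg')^\bot=\mz(\mg)$, valid for any ad-invariant metric and recorded above, with the structure theory of reductive Lie algebras.

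First I would recall that a reductive Lie algebra decomposes as a direct sum of ideals $\mg=\ms\oplus\mz(\mg)$ with $\ms$ semisimple; since $[\ms,\ms]=\ms$ one has $\mg'=\ms$, so that $\mg=\mg'\oplus\mz(\mg)$ is a direct sum of ideals already at the level of vector spaces. In particular $\mg'\cap\mz(\mg)=\{0\}$ and $\dim\mg=\dim\mg'+\dim\mz(\mg)$.

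Next, fix an ad-invariant metric $g$. By the identity $(\mg')^\bot=\mz(\mg)$ we get $\mg'\perp\mz(\mg)$ at once, so the above decomposition is orthogonal. To see that it splits $\mg$ into nondegenerate ideals, note that $\mg'\cap(\mg')^\bot=\mg'\cap\mz(\mg)=\{0\}$, whence $\mg'$ is nondegenerate; by the elementary fact recalled in the text that an ideal is nondegenerate if and only if its orthogonal is, the center $\mz(\mg)=(\mg')^\bot$ is nondegenerate as well. This yields the claimed orthogonal direct sum $\mg=\mg'\oplus\mz(\mg)$.

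The only place where the hypothesis is genuinely used — and hence the point to get right — is the algebraic splitting $\mg=\mg'\oplus\mz(\mg)$ as a \emph{direct} sum. The identity $(\mg')^\bot=\mz(\mg)$ by itself only delivers the dimension count $\dim\mg'+\dim\mz(\mg)=\dim\mg$, which for a general Lie algebra (e.g. a nilpotent one) is perfectly compatible with $\mz(\mg)\subseteq\mg'$ and a degenerate commutator. Reductivity is exactly what forces $\mg'\cap\mz(\mg)=\{0\}$, thereby upgrading the dimension count into a genuine orthogonal decomposition into nondegenerate ideals; everything else is formal.
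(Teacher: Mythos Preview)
Your argument is correct and is essentially the same as the paper's: the corollary is stated immediately after Lemma~\ref{lem:mplustildeg} and follows by specializing that lemma to the case $\mg'\cap\mz(\mg)=\{0\}$, which is precisely the computation you carry out directly (taking $\mm=\mz(\mg)$ and $\tilde\mg=\mm^\bot=\mg'$). The only cosmetic difference is that you bypass the lemma and appeal straight to $(\mg')^\bot=\mz(\mg)$ together with the reductive splitting, which is exactly what the proof of Lemma~\ref{lem:mplustildeg} does in this special case.
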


Let $\g$ be a Lie algebra with  an ad-invariant metric $g$ and let $\mh$ be an ideal of  $\mg$. As mentioned before, if the restriction of the metric $g$ to $\mh$ is nondegenerate, then there exists a complementary nondegenerate ideal $\mk$ such that $\mg$ decomposes as orthogonal direct sum of ideals $\mg=\mh\oplus\mk$. In fact, $\mk=\mh^\bot$ is the complementary  ideal.

The next result from \cite{ZhZh01} shows that if $\mz(\mg)\subset \mg'$, then the converse holds.

\begin{lemma}[{\cite[Lemma 3.2]{ZhZh01}}]\label{pro:iplusjgivesnondeg}
Let $\mg$ be a nonabelian Lie algebra with an ad-invariant metric $g$ such that $\mz(\mg)\subset \mg'$, and let $\mh$ be an ideal of $\mg$. If there exists a complementary ideal $\mk$ such that $\mg=\mh\oplus\mk$, then the restriction of $g$ to $\mh$ and to $\mk$ is nondegenerate. In particular, $\mh$ and $\mk$ admit ad-invariant metrics.
\end{lemma}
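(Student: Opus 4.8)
The plan is to show that the radical of the restriction $g|_\mh$, namely $\mathcal{R} := \mh \cap \mh^\bot$, is trivial; by symmetry the same argument will apply to $\mk$, and then the restricted metrics will be nondegenerate as claimed. First I would record two structural consequences of $\mg = \mh \oplus \mk$ being a direct sum of \emph{ideals}: since $[\mh,\mk] \subseteq \mh \cap \mk = \{0\}$, the two ideals commute, and hence $\mg' = \mh' \oplus \mk'$ with $\mh' \subseteq \mh$ and $\mk' \subseteq \mk$. I would also exploit the fact that ad-invariance~\eqref{eq:adinvdef} together with nondegeneracy makes the trilinear form $(X,Y,Z) \mapsto g([X,Y],Z)$ totally antisymmetric, so that in particular it is invariant under cyclic permutations of its arguments.

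The first key step is to prove $\mathcal{R} \subseteq \mz(\mg)$. Take $X \in \mathcal{R}$. Then $[X, \mk] \subseteq [\mh, \mk] = \{0\}$ automatically. To see $[X, \mh] = \{0\}$, I would test $[X, Y]$ (for $Y \in \mh$) against an arbitrary $W = U + Z$ with $U \in \mh$ and $Z \in \mk$: by cyclic invariance $g([X,Y], Z) = g([Z, X], Y) = 0$ because $[Z, X] \in [\mk, \mh] = \{0\}$, while $g([X,Y], U) = g([Y, U], X) = 0$ because $[Y,U] \in \mh$ and $X \in \mh^\bot$. Nondegeneracy of $g$ on $\mg$ then forces $[X, Y] = 0$, and therefore $X \in \mz(\mg)$.

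The second step is where the hypothesis $\mz(\mg) \subseteq \mg'$ enters. Since $X \in \mathcal{R} \subseteq \mz(\mg) \subseteq \mg' = \mh' \oplus \mk'$ and also $X \in \mh$, the $\mk'$-component of $X$ must lie in $\mh \cap \mk = \{0\}$, whence $X \in \mh'$. Writing $X$ as a sum of brackets $[A,B]$ with $A,B \in \mh$, cyclic invariance again gives $g([A,B], Z) = g([B,Z], A) = 0$ for every $Z \in \mk$, since $[B,Z] \in [\mh, \mk] = \{0\}$; thus $X \bot \mk$. Combined with $X \in \mathcal{R} \subseteq \mh^\bot$, i.e.\ $X \bot \mh$, we conclude $X \bot \mg$, so $X = 0$ by nondegeneracy of $g$. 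Hence $\mathcal{R} = \{0\}$ and $g|_\mh$ is nondegenerate; interchanging $\mh$ and $\mk$ yields the same for $g|_\mk$. Finally, the restriction of $g$ to each ideal inherits~\eqref{eq:adinvdef} and is now nondegenerate, so it is an ad-invariant metric.

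The main obstacle is that $\mh$ and $\mk$ need not be mutually orthogonal a priori, so one cannot directly argue that the radical of $g|_\mh$ is orthogonal to $\mk$. The crux is therefore to first \emph{upgrade} this radical to a central ideal, and then use $\mz(\mg) \subseteq \mg'$ to land it inside $\mh'$, where orthogonality to $\mk$ becomes automatic from $[\mh,\mk] = \{0\}$; this is the only point at which the standing hypothesis on the center is genuinely used.
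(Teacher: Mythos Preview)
The paper does not give its own proof of this lemma; it is quoted verbatim from \cite{ZhZh01} without argument, so there is nothing to compare against. Your proof is correct and is essentially the standard one: the key observations are that the trilinear form $g([\,\cdot\,,\,\cdot\,],\,\cdot\,)$ is totally antisymmetric, that $[\mh,\mk]=0$ forces $\mg'=\mh'\oplus\mk'$, and that these together place $\mh\cap\mh^\bot$ first in $\mz(\mg)$, then (via $\mz(\mg)\subset\mg'$) in $\mh'$, which is automatically orthogonal to $\mk$.

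One minor streamlining: once you know $X\in\mh'$, you do not need to write $X$ as an explicit sum of brackets to get $X\perp\mk$. Since $\mh$ is an ideal and $\mk$ centralizes $\mh$, ad-invariance gives directly $g(\mh',\mk)=g([\mh,\mh],\mk)=g(\mh,[\mh,\mk])=0$, so $\mh'\subset\mk^\bot$ in one line.
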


\begin{proposition}\label{pro:decomposable}
Let $\mg=\mg_1\oplus\mg_2$ be a reducible Lie algebra. Then $\mg$ admits an ad-invariant metric if and only if each $\mg_i$ admits one.
\end{proposition}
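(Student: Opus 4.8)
The plan is to prove the two implications separately, the forward one being routine and the converse relying on the structural lemmas above.

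For the ``if'' direction, suppose each $\mg_i$ carries an ad-invariant metric $g_i$. Since $\mg_1,\mg_2$ are ideals with $\mg=\mg_1\oplus\mg_2$, they commute, i.e. $[\mg_1,\mg_2]=0$. I would set $g=g_1\oplus g_2$, declaring $\mg_1\bot\mg_2$ and $g|_{\mg_i}=g_i$. Nondegeneracy is immediate, and the ad-invariance identity~\eqref{eq:adinvdef} decouples along the two factors: every bracket stays inside a single factor and the factors are orthogonal, so the identity reduces to the ad-invariance of $g_1$ and of $g_2$.

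For the ``only if'' direction it suffices, by symmetry, to produce an ad-invariant metric on $\mg_1$ from an ad-invariant metric $g$ on $\mg$. The naive attempt of restricting $g$ to $\mg_1$ fails in general, because $g|_{\mg_1}$ may be degenerate (already for abelian $\mg$ with an off-diagonal metric); this degeneracy is the main obstacle, and the strategy is to trade it for an abelian summand on which any metric will do. I would first record that $\mg'=\mg_1'\oplus\mg_2'$, $\mz(\mg)=\mz(\mg_1)\oplus\mz(\mg_2)$, and hence $\mg'\cap\mz(\mg)=(\mg_1'\cap\mz(\mg_1))\oplus(\mg_2'\cap\mz(\mg_2))$. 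If $\mg$ is abelian the claim is trivial, so assume $\mg$ nonabelian and split into cases. If $\mz(\mg)\subseteq\mg'$, then Lemma~\ref{pro:iplusjgivesnondeg} applied to the ideal decomposition $\mg=\mg_1\oplus\mg_2$ shows that $g$ restricts nondegenerately to each $\mg_i$, so each $\mg_i$ admits an ad-invariant metric and we are done.

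The remaining case is $\mz(\mg)\not\subseteq\mg'$, where I would invoke Lemma~\ref{lem:mplustildeg} with a decomposition chosen compatibly with the factors. For each $i$, pick a complement $\mm_i$ of $\mg_i'\cap\mz(\mg_i)$ inside $\mz(\mg_i)$ and write $\mg_i=\mm_i\oplus\tilde\mg_i$ as in Remark~\ref{rem:centreduc} (with $\mm_i=0$ when $\mz(\mg_i)\subseteq\mg_i'$), so that $\mm_i$ is a central abelian ideal, $\tilde\mg_i\supseteq\mg_i'$ is an ideal, and $\mz(\tilde\mg_i)\subseteq\tilde\mg_i'$. Then $\mm:=\mm_1\oplus\mm_2$ is a nonzero complement of $\mg'\cap\mz(\mg)$ in $\mz(\mg)$, so Lemma~\ref{lem:mplustildeg} applies verbatim: $\mm$ is a nondegenerate abelian ideal, $\mm^\bot$ is a nondegenerate ideal carrying the restricted ad-invariant metric, and $\mz(\mm^\bot)\subseteq(\mm^\bot)'$. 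The key identification is that $\mm^\bot\cong\tilde\mg_1\oplus\tilde\mg_2$ as Lie algebras: since $\mm$ is central, both $\mm^\bot$ and $\tilde\mg_1\oplus\tilde\mg_2$ are ideal-complements of $\mm$ in $\mg$, hence both are isomorphic to the quotient $\mg/\mm$. Transporting the restricted metric through this isomorphism makes $\tilde\mg_1\oplus\tilde\mg_2$ a nonabelian metric Lie algebra whose center lies in its commutator, so a second application of Lemma~\ref{pro:iplusjgivesnondeg} yields nondegenerate restrictions to $\tilde\mg_1$ and to $\tilde\mg_2$; in particular each $\tilde\mg_i$ admits an ad-invariant metric. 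Finally I would reassemble $\mg_1=\mm_1\oplus\tilde\mg_1$ via the already-proved ``if'' direction, using any nondegenerate form on the abelian $\mm_1$, to obtain an ad-invariant metric on $\mg_1$; the same argument produces one on $\mg_2$. The step requiring the most care is precisely the compatible choice of $\mm=\mm_1\oplus\mm_2$ together with the identification $\mm^\bot\cong\tilde\mg_1\oplus\tilde\mg_2$, since this is what reconnects the metric-orthogonal splitting furnished by Lemma~\ref{lem:mplustildeg} to the original algebraic direct sum.
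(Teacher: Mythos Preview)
Your proof is correct and follows essentially the same route as the paper: split off a compatible abelian piece $\mm=\mm_1\oplus\mm_2$ via Lemma~\ref{lem:mplustildeg}, identify $\mm^\bot$ with $\tilde\mg_1\oplus\tilde\mg_2$, and then apply Lemma~\ref{pro:iplusjgivesnondeg} to the latter. Your justification of the identification $\mm^\bot\cong\tilde\mg_1\oplus\tilde\mg_2$ through the common quotient $\mg/\mm$ is in fact a bit more explicit than the paper's, which simply reads it off from the direct-sum decomposition~\eqref{eq:m1mg1}.
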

\begin{proof}
It is clear that if each $\mg_i$ admits an ad-invariant metric, then the orthogonal sum of such metrics defines an ad-invariant metric on $\mg$.

To prove the converse, assume that $\mg$ admits an ad-invariant metric. If $\mz(\mg)\subset \mg'$ the result follows from Lemma~\ref{pro:iplusjgivesnondeg}, since the restriction of $g$ to each ideal is nondegenerate and ad-invariant.

If $\mz(\mg)\nsubseteq \mg'$, then $\mz(\mg_i)\nsubseteq \mg_i'$ for at least one of $i=1,2$. For each $i=1,2$, consider ideals $\mm_i$, $\tmg_i$ of $\mg_i$ such that $\mg_i=\mm_i\oplus \tmg_i$
and  $\mm_i\oplus (\mz(\mg_i)\cap  \mg_i')=\mz(\mg_i)$ (see Remark~\ref{rem:centreduc}).

The ideal $\mm=\mm_1\oplus\mm_2$ is nondegenerate and verifies~\eqref{eq:mgz}, so  proceeding as in Lemma~\ref{lem:mplustildeg} we get that
its orthogonal $\tmg=\mm^\bot$ is also nondegenerate and satisfies $\mz(\tmg)\subset \tmg'$. Moreover, since
\begin{equation}\label{eq:m1mg1}
\mg=\mm_1\oplus\tmg_1\oplus\mm_2\oplus\tmg_2=\mm\oplus \tmg,
\end{equation}
we obtain that $\tmg$ is reducible and isomorphic to $\tmg_1\oplus\tmg_2$. Hence, Lemma~\ref{pro:iplusjgivesnondeg} implies that $\tmg_i$ admits an ad-invariant metric, for $i=1,2$. Since any metric on $\mm_i$ is ad-invariant (because it is abelian) we finally get that $\mg_i=\mm_i\oplus \tmg_i$ admits an ad-invariant metric, for each $i=1,2$.
\end{proof}

\begin{proposition}
\label{prop:irr_is_irr}
A Lie algebra $\mg$ admitting ad-invariant metrics is irreducible if and only if $(\mg,g)$ is irreducible as a metric Lie algebra for every ad-invariant metric $g$.
\end{proposition}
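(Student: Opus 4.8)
The forward implication is the one already observed just before the statement, and I would simply make it explicit. If $(\mg,g)$ were reducible as a metric Lie algebra for some ad-invariant $g$, say $\mg=\mh\oplus\mh^\bot$ with both summands nondegenerate and nontrivial, then since $\mh$ and $\mh^\bot$ are ideals, this is in particular a decomposition of $\mg$ into two nontrivial ideals, contradicting the irreducibility of $\mg$. Hence irreducibility of $\mg$ forces $(\mg,g)$ to be irreducible as a metric Lie algebra for every ad-invariant $g$, with no appeal to the hypothesis that $\mg$ admits such metrics.

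For the converse I would argue by contraposition, producing from a Lie-algebra decomposition of $\mg$ a single ad-invariant metric that splits metrically. Assume $\mg$ is reducible, so that $\mg=\mg_1\oplus\mg_2$ with $\mg_1,\mg_2$ nontrivial ideals. Since $\mg$ admits an ad-invariant metric by hypothesis, Proposition~\ref{pro:decomposable} applies and each $\mg_i$ carries an ad-invariant metric $g_i$.

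I would then take $g=g_1\oplus g_2$, the metric on $\mg$ that restricts to $g_i$ on $\mg_i$ and makes $\mg_1\perp\mg_2$. Because $[\mg_1,\mg_2]=0$, the ad-invariance identity~\eqref{eq:adinvdef} for $g$ decouples into the corresponding identities for $g_1$ and $g_2$, so $g$ is ad-invariant; nondegeneracy of $g$ is immediate from nondegeneracy of $g_1$ and $g_2$. Since $\mg_2\subseteq\mg_1^\bot$ and $\dim\mg_1^\bot=\dim\mg-\dim\mg_1=\dim\mg_2$, we get $\mg_2=\mg_1^\bot$, so $\mg=\mg_1\oplus\mg_1^\bot$ exhibits $(\mg,g)$ as reducible as a metric Lie algebra. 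Thus reducibility of $\mg$ yields an ad-invariant metric for which $\mg$ is metrically reducible, which is exactly the contrapositive of the desired converse.

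The genuine content—that each factor of a reducible Lie algebra admitting an ad-invariant metric again admits one—has already been settled in Proposition~\ref{pro:decomposable}, whose proof must treat separately the cases $\mz(\mg)\subset\mg'$ and $\mz(\mg)\nsubseteq\mg'$. Granting that result, the only remaining point, and the mildest possible obstacle, is the bookkeeping that the orthogonal sum of the $g_i$ realizes the ideal decomposition $\mg=\mg_1\oplus\mg_2$ as an orthogonal decomposition into nondegenerate ideals; this follows at once from the orthogonality built into $g$ and the dimension count recorded above.
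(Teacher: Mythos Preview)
Your proof is correct for the stated biconditional, but it follows a genuinely different route from the paper's argument. For the converse you argue by contraposition and \emph{construct} an ad-invariant metric that splits: given $\mg=\mg_1\oplus\mg_2$, you invoke Proposition~\ref{pro:decomposable} to obtain metrics $g_i$ on the factors and take $g=g_1\oplus g_2$. The paper instead fixes an arbitrary ad-invariant metric $g$ with $(\mg,g)$ metrically irreducible and shows directly, via Lemma~\ref{lem:mplustildeg} and Lemma~\ref{pro:iplusjgivesnondeg}, that $\mg$ must be irreducible: one first reduces to $\mz(\mg)\subset\mg'$, and then Lemma~\ref{pro:iplusjgivesnondeg} forces any ideal decomposition $\mg=\mg_1\oplus\mg_2$ to be $g$-nondegenerate on each factor, contradicting metric irreducibility of that same $g$.

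The practical difference is that the paper's argument actually proves the stronger statement ``$(\mg,g)$ metrically irreducible for \emph{some} ad-invariant $g$ implies $\mg$ irreducible'', whereas your contrapositive only yields ``$\mg$ reducible implies $(\mg,g)$ metrically reducible for \emph{some} $g$''. The stronger version is what is used immediately afterwards, when a given $(\mg,g)$ is decomposed into metrically irreducible orthogonal ideals and one wants to conclude that each such ideal is irreducible as a Lie algebra. Your argument, as written, would not deliver that corollary without further work; the paper's does.
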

\begin{proof}
It is straightforward that if a Lie algebra is irreducible, then it is irreducible as a metric Lie algebra, for every ad-invariant metric on $\mg$.

Suppose that $(\mg,g)$ is irreducible as a metric Lie algebra with $g$ ad-invariant. By Lemma~\ref{lem:mplustildeg}, either $\mg$ is one-dimensional, or $\mz(\mg)\subset \mg'$. In the former case, $\mg$ is clearly irreducible.

In the case that $\mz(\mg)\subset \mg'$, assume for a contradiction that $\mg$ is reducible and $\mg=\mg_1\oplus\mg_2$ with $\mg_i$ nontrivial ideals. Lemma~\ref{pro:iplusjgivesnondeg} implies that $g$ restricted to each $\mg_i$ is nondegenerate. In particular $\mg=\mg_1\oplus\mg_1^\bot$, so $\mg$ is reducible as a metric Lie algebra, which is absurd.
\end{proof}

Let $\mg$ be a Lie algebra with an ad-invariant metric. From the definition of reducible metric Lie algebra, it is clear that $\mg$ can be written as an orthogonal direct sum of ideals, each of which is nondegenerate and  irreducible as a metric Lie algebra. Proposition~\ref{prop:irr_is_irr} allows us to show that these ideals are also irreducible as Lie algebras. As an immediate consequence, we obtain:

\begin{corollary}
Let $\mg$ be a Lie algebra with an ad-invariant metric $g$. Then, $\mg$ can be written as a direct sum of orthogonal ideals
\begin{equation*}
\mg=\mg_0\oplus\mg_1\oplus\cdots\oplus\mg_s,
\end{equation*}
where $\mg_0$ is a central ideal and  $\mg_1,\dots,\mg_s$ are irreducible ideals.
\end{corollary}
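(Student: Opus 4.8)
The plan is to first refine the obvious orthogonal decomposition and then identify the nature of its factors. Starting from $(\mg,g)$, I would argue by induction on $\dim\mg$: if $(\mg,g)$ is irreducible as a metric Lie algebra there is nothing to do, while if it is reducible I write $\mg=\mh\oplus\mh^\bot$ with $\mh$ a nondegenerate ideal and $0\neq\mh\neq\mg$, and apply the inductive hypothesis to each summand, whose dimension is strictly smaller. Since orthogonality is preserved at each splitting (everything inside $\mh$ is orthogonal to everything inside $\mh^\bot$), this produces an orthogonal direct sum of ideals
\begin{equation*}
\mg=\ma_1\oplus\cdots\oplus\ma_r,
\end{equation*}
in which each $\ma_i$ is nondegenerate and irreducible as a metric Lie algebra. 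Because $\ma_i$ is a nondegenerate ideal, the restriction $g|_{\ma_i}$ is again nondegenerate and ad-invariant, so each $(\ma_i,g|_{\ma_i})$ is a metric Lie algebra with ad-invariant metric.

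The second step is to show that each $\ma_i$ is either one-dimensional or irreducible as a Lie algebra. This is exactly the content of the argument proving Proposition~\ref{prop:irr_is_irr}, applied to the single metric $g|_{\ma_i}$: if $\ma_i$ is abelian, irreducibility as a metric Lie algebra forces it to be one-dimensional, since an abelian metric Lie algebra of dimension at least two always splits off a nondegenerate line; if $\ma_i$ is nonabelian, Lemma~\ref{lem:mplustildeg} rules out $\mz(\ma_i)\nsubseteq\ma_i'$ (which would make it reducible as a metric Lie algebra), so $\mz(\ma_i)\subset\ma_i'$, and then a hypothetical splitting $\ma_i=\mb\oplus\mc$ into nontrivial ideals would, by Lemma~\ref{pro:iplusjgivesnondeg}, have nondegenerate factors, yielding $\ma_i=\mb\oplus\mb^\bot$ and contradicting irreducibility as a metric Lie algebra.

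Finally I would reassemble the factors. Every one-dimensional $\ma_i$ is central in $\mg$: distinct ideals of a direct sum commute, and a one-dimensional ideal is abelian, so each such $\ma_i$ lies in $\mz(\mg)$. Collecting all of them into $\mg_0$ gives a nondegenerate central abelian ideal, while the remaining factors, being irreducible as Lie algebras, are relabeled $\mg_1,\dots,\mg_s$; all summands stay mutually orthogonal. The one delicate point I want to flag is that Proposition~\ref{prop:irr_is_irr} is stated as a biconditional quantified over \emph{every} ad-invariant metric, whereas here I only control the single restricted metric $g|_{\ma_i}$ on each factor. For this reason I would not invoke the proposition as a black box but rather reuse its proof, which in fact derives irreducibility as a Lie algebra from irreducibility as a metric Lie algebra for one fixed metric, via Lemmas~\ref{lem:mplustildeg} and~\ref{pro:iplusjgivesnondeg}. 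Apart from this observation the argument is routine, the only bookkeeping being the verification that $g$ restricts to an ad-invariant metric on each nondegenerate ideal and that the one-dimensional factors are genuinely central.
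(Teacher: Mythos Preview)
Your argument is correct and follows essentially the same route as the paper: decompose $(\mg,g)$ into metric-irreducible orthogonal ideals, then use the argument behind Proposition~\ref{prop:irr_is_irr} to see that each factor is either one-dimensional (hence central) or irreducible as a Lie algebra, and finally collect the one-dimensional pieces into $\mg_0$. Your observation that the \emph{statement} of Proposition~\ref{prop:irr_is_irr} (a biconditional quantified over all ad-invariant metrics) is formally weaker than what its \emph{proof} actually establishes (irreducibility as a metric Lie algebra for a single metric already forces irreducibility as a Lie algebra) is a fair point and exactly the right way to justify the step.
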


\begin{corollary}
Let $\mg$ be a semisimple Lie algebra, and let $\mg=\ms_1\oplus \dots \oplus \ms_k$ be its decomposition into simple ideals. Then every ad-invariant metric on $\mg$ has the form
\[h_1B_1+\dots + h_k B_k,\]
where $h_1,\dotsc, h_k$ are nonzero constants and $B_1,\dotsc, B_k$ are the Killing forms on each factor.
\end{corollary}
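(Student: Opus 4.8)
The plan is to reduce the statement to the simple factors and then apply Schur's lemma to each. Since $\mg$ is semisimple we have $\mz(\mg)=\{0\}\subset\mg'=\mg$, and $\mg=\ms_1\oplus\dots\oplus\ms_k$ is its canonical decomposition into simple ideals. First I would show that any ad-invariant metric $g$ makes the factors mutually orthogonal: if $i\neq j$ then $[\ms_i,\ms_j]=\{0\}$, and since $\ms_i=[\ms_i,\ms_i]$ is perfect, writing $X=\sum_a[A_a,B_a]\in\ms_i$ and applying~\eqref{eq:adinvdef} gives $g(X,Y)=-\sum_a g(B_a,[A_a,Y])=0$ for every $Y\in\ms_j$. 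Hence $\ms_i^\bot=\bigoplus_{l\neq i}\ms_l$, so each $\ms_i$ is nondegenerate (this also follows from Lemma~\ref{pro:iplusjgivesnondeg}, as $\mz(\mg)\subset\mg'$), and $g$ splits as an orthogonal direct sum $g=g|_{\ms_1}\oplus\dots\oplus g|_{\ms_k}$ of ad-invariant metrics on the simple factors.

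The core of the argument is then to prove that an ad-invariant metric on a single simple Lie algebra $\ms$ is a nonzero multiple of its Killing form $B$, which is nondegenerate by Cartan's criterion. Given such a metric $g_i$, I would define $\phi\in\End(\ms)$ by $g_i(X,Y)=B(\phi X,Y)$; this $\phi$ exists and is unique precisely because $B$ is nondegenerate. Comparing the ad-invariance identities for $g_i$ and $B$ yields $B(\phi[X,Y],Z)=B([X,\phi Y],Z)$ for all $X,Y,Z$, and nondegeneracy of $B$ gives $\phi\circ\ad_X=\ad_X\circ\phi$ for every $X$. Thus $\phi$ is an endomorphism of the adjoint representation, which is irreducible since $\ms$ is simple. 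Over $\C$, Schur's lemma forces $\phi=h\,\id$, whence $g_i=hB$ with $h\neq0$ because $g_i$ is nondegenerate; combined with the first step this gives $g=h_1B_1+\dots+h_kB_k$.

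The main obstacle is the step over $\R$, where Schur's lemma only guarantees that the commutant $\End_\ms(\ms)$ of the adjoint representation is a real division algebra, and the conclusion $\phi=h\,\id$ requires this commutant to equal $\R\,\id$, i.e.\ absolute irreducibility of the adjoint representation. This holds for every real simple Lie algebra except the realifications $\ms_0^\R$ of complex simple Lie algebras, for which $\End_\ms(\ms)=\C$ and the space of ad-invariant symmetric forms is genuinely two-dimensional (spanned by the real and imaginary parts of the complex Killing form). Consequently the scalar conclusion as stated is immediate over $\C$, while over $\R$ one must either restrict to absolutely simple factors or reinterpret ``Killing form'' accordingly; pinning down this commutant computation is the delicate point, and I would handle it by the standard real/complex/quaternionic trichotomy for the adjoint representation.
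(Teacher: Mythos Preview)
Your argument is correct and essentially follows the same overall strategy as the paper: reduce to an orthogonal direct sum of simple factors and then invoke that an ad-invariant metric on a simple Lie algebra is a scalar multiple of the Killing form. The paper's proof is terser: it appeals to the previous corollary (every metric Lie algebra splits orthogonally into irreducible ideals) and to the fact that every irreducible ideal of a semisimple Lie algebra is one of the $\ms_i$, then simply asserts that ad-invariant metrics on each $\ms_i$ are multiples of the Killing form. By contrast, you prove the orthogonality directly from perfectness and~\eqref{eq:adinvdef}, and you spell out the Schur's lemma step explicitly; this is a minor but genuine difference in route, and your version is self-contained rather than relying on the decomposition machinery of Section~\ref{sec:Decomposable}.

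Your caveat about the real case is well taken and in fact sharper than the paper: for a real simple Lie algebra of the form $\ms_0^\R$ with $\ms_0$ complex simple, the commutant of the adjoint representation is $\C$, so the space of ad-invariant symmetric forms is two-dimensional (spanned by $\Re B_\C$ and $\Im B_\C$), and the statement literally fails for such factors. The paper glosses over this point by treating ``ad-invariant metrics on a simple Lie algebra are multiples of the Killing form'' as a black box; your analysis via the real/complex/quaternionic trichotomy makes the hypothesis under which the conclusion holds explicit. So there is no gap in your reasoning---if anything, you have identified a subtlety the paper's brief proof does not address.
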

\begin{proof}
Fix an ad-invariant metric on $\mg$, and write $\mg$ as the orthogonal direct sum of irreducible ideals. Since every irreducible ideal in $\mg$ is one of the $\ms_i$, and ad-invariant metrics on each $\ms_i$ are multiples of the Killing form, the statement follows.
\end{proof}

\section{The uniqueness problem for ad-invariant metrics}\label{sec:uniqueness}

Given an ad-invariant metric $g$ on a Lie algebra $\mg$, it is clear that $g(\psi\cdot,\psi\cdot)$ is also ad-invariant for any  $\psi$ in $\Aut(\g)$. In other words, $\Aut(\g)$ acts on the space of ad-invariant metrics.

It is striking that for many Lie algebras admitting an ad-invariant metric (for instance, irreducible nilpotent Lie algebras of dimension $\leq 7$, see \cite{FavreSantharoubane:adInvariant}) the space of ad-invariant metrics consists of a single orbit, that is to say, the metric is unique up to automorphisms. In other cases, the ad-invariant metric is unique up to automorphisms and a scalar factor. We aim at finding a characterization of Lie algebras with these properties.

Given a Lie algebra $\g$ with a fixed metric $g$, every metric on $\g$ has the form
\[g_\phi(X,Y)= g(\phi X,Y),\]
where $\phi$ is an endomorphism of $\g$ (in the vector space sense, i.e. a linear map $\g\to\g$), self-adjoint with respect to $g$; this gives a bijection between metrics and $g$-self-adjoint endomorphisms. If $g$ is ad-invariant, then $g_\phi$ is ad-invariant if and only if
\begin{equation}
 \label{eqn:adinvariantphi}
 \phi([X,Y])=[X,\phi(Y)], \quad \text{for every }X,Y\in\g.
\end{equation}
An endomorphism satisfying~\eqref{eqn:adinvariantphi} is said to be \emph{ad-invariant}. Notice that if $\phi$ is ad-invariant and self-adjoint, then $\phi^{-1}$ has the same properties.

The group of automorphisms acts on the set of ad-invariant self-adjoint maps by $\psi\cdot\phi = \psi\phi\psi^*$, for $\psi\in \Aut(\mg)$. It follows that the uniqueness of ad-invariant metrics up to automorphisms is equivalent to the uniqueness of ad-invariant self-adjoint endomorphisms up to this $\Aut(\mg)$-action.

Recall that on a complex vector space $V$ with a real structure $\sigma$ one defines the real subspace $[V]=\{v\in V\st \sigma(v)=v\}$.
\begin{proposition}
\label{prop:eigenvaluesofphi}
Let $\g$ be a complex Lie algebra, let $\phi\colon\lie{g}\to \lie{g}$ be an ad-invariant linear map, and let $\g=\bigoplus V_\lambda$ be the decomposition into generalized eigenspaces of $\phi$. Then each $V_\lambda$ is an ideal in $\g$.

In particular given a real Lie algebra $\g$ and an ad-invariant linear map $\phi\colon\g\to\g$, in terms of the complexification $\g^\C$, we obtain a decomposition
\[\g=\bigoplus_{\begin{smallmatrix}
\lambda \text{ real}\\
\text{eigenvalue}
\end{smallmatrix}}
[V_\lambda] \oplus
\bigoplus_{\begin{smallmatrix}
\{\lambda,\overline{\lambda}\} \text{ conjugate}\\
\text{nonreal eigenvalues}\end{smallmatrix}}
[V_\lambda\oplus V_{\overline{ \lambda}}].\]
\end{proposition}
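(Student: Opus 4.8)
The plan is to reduce everything to the elementary linear-algebra fact that commuting endomorphisms preserve each other's generalized eigenspaces. First I would rewrite the ad-invariance condition~\eqref{eqn:adinvariantphi} as the statement that $\phi$ commutes with every adjoint operator: setting $\ad(X)=[X,\cdot]\colon\g\to\g$, equation~\eqref{eqn:adinvariantphi} reads $\phi\circ\ad(X)=\ad(X)\circ\phi$ for all $X\in\g$. This single commutation relation is the entire content of the hypothesis, and the rest is bookkeeping.

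For the complex case, fix an eigenvalue $\lambda$ and recall that $V_\lambda=\ker(\phi-\lambda\,\id)^N$ for $N=\dim_\C\g$. Since $\ad(X)$ commutes with $\phi$, it commutes with the polynomial $(\phi-\lambda\,\id)^N$; hence for $Y\in V_\lambda$ one computes $(\phi-\lambda\,\id)^N[X,Y]=\ad(X)\,(\phi-\lambda\,\id)^N Y=0$, so $[X,Y]\in V_\lambda$. This gives $[\g,V_\lambda]\subseteq V_\lambda$, i.e.\ each generalized eigenspace is an ideal, which is the first assertion.

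For the real case, I would complexify: $\phi$ extends $\C$-linearly to an ad-invariant map $\phi^\C$ on $\g^\C$, whose generalized eigenspaces $V_\lambda$ are ideals of $\g^\C$ by the complex case just proved. Let $\sigma$ be the real structure on $\g^\C$ with $[\g^\C]=\g$. Because $\phi$ is real, $\phi^\C$ commutes with $\sigma$; since $\sigma$ is conjugate-linear, a short computation gives $\sigma(V_\lambda)=V_{\overline\lambda}$. Thus $V_\lambda$ is $\sigma$-invariant when $\lambda$ is real, while $V_\lambda\oplus V_{\overline\lambda}$ is $\sigma$-invariant when $\lambda$ is nonreal. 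Grouping the summands of $\g^\C=\bigoplus_\lambda V_\lambda$ into $\sigma$-orbits and applying the fixed-point functor $[\cdot]$, which distributes over direct sums of $\sigma$-invariant subspaces, yields the displayed decomposition. Finally, a $\sigma$-invariant ideal $\mathfrak I$ of $\g^\C$ has $[\mathfrak I]=\mathfrak I\cap\g$ an ideal of $\g$, since the bracket of a real element with an element of $[\mathfrak I]$ lands in both $\g$ and $\mathfrak I$; hence each summand $[V_\lambda]$ and $[V_\lambda\oplus V_{\overline\lambda}]$ is itself an ideal.

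I do not anticipate a serious obstacle: once the commutation $\phi\circ\ad(X)=\ad(X)\circ\phi$ is recorded, the complex statement is essentially automatic. The only points demanding care are the bookkeeping ones in the real case, namely checking the relation $\sigma(V_\lambda)=V_{\overline\lambda}$ and verifying that $\sigma$-invariant complex ideals descend to real ideals so that the $[\cdot]$-functor interacts correctly with the eigenspace decomposition.
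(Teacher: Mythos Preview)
Your proposal is correct and follows essentially the same approach as the paper. The only cosmetic difference in the complex case is which slot of the bracket you push $(\phi-\lambda\,\id)^N$ through: the paper writes $(\phi-\lambda\,\id)^n[x,y]=[(\phi-\lambda\,\id)^n x,y]=0$ for $x\in V_\lambda$, whereas you use the commuting form $(\phi-\lambda\,\id)^N[X,Y]=[X,(\phi-\lambda\,\id)^N Y]=0$ for $Y\in V_\lambda$; these are equivalent via antisymmetry. For the real case the paper is very terse (``since $\phi^\C$ preserves the real structure, the decomposition follows''), and your explicit verification that $\sigma(V_\lambda)=V_{\overline\lambda}$ and that $[\cdot]$ distributes over $\sigma$-invariant summands simply unpacks that sentence.
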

\begin{proof}
Let $\lambda$ be an eigenvalue, and $V_\lambda = \ker (\phi-\lambda \id)^n$, with $n=\dim \g$, the associated generalized eigenspace. Then, if $x\in V_\lambda$ and $y\in V_\mu$ we have
\[
(\phi-\lambda \id)^n[x,y]=[(\phi-\lambda \id)^n x,y]=0;\]
so $[x,y]$ also belongs to $V_\lambda$.

Given a real Lie algebra $\mg$ and an ad-invariant linear map $\phi\colon\mg\to\mg$, since $\phi^\C\colon\g^\C\to\g^\C$ preserves the real structure, the decomposition follows.
\end{proof}

\begin{corollary}
\label{cor:phihasoneortwoeigenvalues}
Let $g$ be an ad-invariant metric on an irreducible Lie algebra $\g$ over $\K=\R,\C$, and let  $\phi\colon\g\to\g$ be ad-invariant and self-adjoint. Then $\phi=\phi_S+\phi_N$, where
\begin{enumerate}[label=(\roman*)]
 \item $\phi_S$, $\phi_N$ are ad-invariant and self-adjoint;
 \item $\phi_N$ is nilpotent;
 \item for $\K=\C$, $\phi_S$ is a multiple of the identity;
 \item \label{it:rphi} for $\K=\R$, $\phi_S$ has the form $a\id + bJ$, with $J\colon\g\to\g$ satisfying $J^2=-\id$, and $a,b\in\R$.
\end{enumerate}
\end{corollary}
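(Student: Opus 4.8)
The plan is to let $\phi=\phi_S+\phi_N$ be the Jordan--Chevalley decomposition of $\phi$, with $\phi_S$ semisimple, $\phi_N$ nilpotent and $[\phi_S,\phi_N]=0$; since $\K=\R,\C$ is a perfect field this decomposition exists and, crucially, both $\phi_S$ and $\phi_N$ are polynomials in $\phi$ with coefficients in $\K$. This immediately gives (ii), and it is what makes (i) essentially free. Indeed, the ad-invariance condition~\eqref{eqn:adinvariantphi} says precisely that $\phi$ commutes with every $\ad_X$, a property inherited by any polynomial in $\phi$; and self-adjointness is inherited too, because the adjoint with respect to $g$ is $\K$-linear, so that $p(\phi)^*=p(\phi^*)=p(\phi)$ whenever $\phi^*=\phi$. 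Hence $\phi_S$ and $\phi_N$ are both ad-invariant and self-adjoint.

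Next I would pin down $\phi_S$ using the eigenvalue decomposition. The generalized eigenspaces of $\phi$ coincide with the eigenspaces of $\phi_S$, and by Proposition~\ref{prop:eigenvaluesofphi} each of them is an ideal of $\g$. Over $\C$, grouping one eigenvalue against the rest would exhibit $\g$ as a direct sum of two nontrivial ideals unless there is a single eigenvalue $\lambda$; irreducibility therefore forces $\phi_S=\lambda\id$, proving (iii). Over $\R$ I would instead invoke the refined splitting of Proposition~\ref{prop:eigenvaluesofphi}, by which $\g$ decomposes as a direct sum of the real ideals $[V_\lambda]$ (real eigenvalues) and $[V_\lambda\oplus V_{\overline\lambda}]$ (conjugate pairs of nonreal eigenvalues). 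Once again irreducibility allows exactly one nonzero summand, so either $\phi^\C$ has a single real eigenvalue $a$, whence $\phi_S=a\id$, or it has a single conjugate pair $\lambda=a\pm b i$ with $b\neq 0$.

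In this last case I would set $J=\tfrac1b(\phi_S-a\id)$, which is a genuine real endomorphism of $\g$ because $\phi_S$ is one (being a real polynomial in $\phi$) and $a,b\in\R$. Complexifying, $\phi_S^\C$ acts as $a+bi$ on $V_\lambda$ and as $a-bi$ on $V_{\overline\lambda}$, so $J^\C=iP_\lambda-iP_{\overline\lambda}$ for the spectral projections $P_\lambda,P_{\overline\lambda}$, and a one-line computation using $P_\lambda P_{\overline\lambda}=0$ and $P_\lambda+P_{\overline\lambda}=\id$ gives $(J^\C)^2=-\id$, hence $J^2=-\id$ and $\phi_S=a\id+bJ$, which is~\ref{it:rphi}. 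The bulk of the argument is routine once the Jordan--Chevalley decomposition is available; the only point requiring a little care is the real conjugate-pair case, where one must verify that $J$ is defined over $\R$ and squares to $-\id$, and keep the bookkeeping straight between the complex eigendata and the real structure — but this is exactly the information that Proposition~\ref{prop:eigenvaluesofphi} was designed to package.
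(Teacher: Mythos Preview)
Your proof is correct and follows essentially the same route as the paper: Jordan--Chevalley decomposition (with both parts polynomials in $\phi$, hence ad-invariant and self-adjoint), followed by Proposition~\ref{prop:eigenvaluesofphi} and irreducibility to force a single eigenvalue or a single conjugate pair. The only cosmetic difference is that the paper dispatches $J^2=-\id$ in one line by noting that $\phi_S-a\id$ is diagonalizable with eigenvalues $\pm ib$ and therefore squares to $-b^2\id$, whereas you unpack the same computation via the spectral projections $P_\lambda,P_{\overline\lambda}$.
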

\begin{proof}
By the Jordan decomposition theorem, one can write $\phi=\phi_N+\phi_S$, with $\phi_N$ nilpotent and $\phi_S$ semisimple, where $\phi_N$ and $\phi_S$ are polynomials in $\phi$. Therefore, $\phi_N$ and $\phi_S$ are self-adjoint and ad-invariant.

In the real case, since $\g$ is irreducible, Proposition~\ref{prop:eigenvaluesofphi} applied to $\phi$ (or equivalently to $\phi_S$) implies that either $\phi$ has only one eigenvalue, which has to be real, or two complex conjugate eigenvalues $a\pm ib$. In the latter case, $\phi_S-a\id$ is diagonalizable with eigenvalues $\pm ib$, so it squares to $-b^2\id$.

The complex case is similar.
\end{proof}

\begin{remark}\label{rem:complexfromJadinv}
In the case~\ref{it:rphi} of Corollary~\ref{cor:phihasoneortwoeigenvalues}, if the component of $\phi_S=a\id + bJ$ along $J$ is not zero, then $J$ is ad-invariant and thus $(\g,J)$ is a complex vector space such that the Lie bracket is $\C$-bilinear. Therefore, this only happens when $\g$ is the real Lie algebra underlying a complex Lie algebra. Moreover, $g$ and $\phi$ induce a complex ad-invariant scalar product
\[h( X,Y) = g(X,Y) - ig(JX,Y).\]
This is $\C$-bilinear, since
\[h( JX,Y) = g(JX,Y) + ig(X,Y).\]
The two real ad-invariant scalar products  $g(\cdot,\cdot)$ and $g(J\cdot,\cdot)$ therefore correspond to the real and imaginary part of a complex ad-invariant scalar product on $(\g,J)$.
\end{remark}

Given a metric Lie algebra $(\g,g)$ and a linear map $f\colon \g\to\g$, we will denote the adjoint of $f$ by $f^*$, or $f^{*_g}$ to emphasize the dependence on the metric.
\begin{lemma}
\label{lemma:symmetrized_derivation_ad_invariant}
Let $\mg$ be a Lie algebra endowed with an ad-invariant metric $g$. Then for any derivation $D\in \Der(\mg)$, the self-adjoint endomorphism $\phi:=D+D^*$ is ad-invariant.
\end{lemma}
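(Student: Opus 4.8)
The plan is to prove the equivalent assertion that $\phi=D+D^*$ commutes with every adjoint operator $\ad_X$, since condition~\eqref{eqn:adinvariantphi} says exactly that $\phi\circ\ad_X=\ad_X\circ\phi$ for all $X\in\mg$. That $\phi$ is self-adjoint is immediate from $(D+D^*)^*=D+D^*$, so the only real content is ad-invariance, and I would phrase the whole argument in terms of the commutator $[\,\cdot\,,\ad_X]$ of endomorphisms.

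First I would record two elementary identities. On the one hand, the derivation property of $D$ is equivalent to $[D,\ad_X]=\ad_{DX}$ for every $X\in\mg$: evaluating the left-hand side on $Y$ gives $D([X,Y])-[X,DY]$, which equals $[DX,Y]$ precisely because $D$ is a derivation. On the other hand, ad-invariance of the metric, i.e.~\eqref{eq:adinvdef}, says exactly that each $\ad_X$ is skew-adjoint, $(\ad_X)^*=-\ad_X$.

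The key step is then to take the $g$-adjoint of the first identity and feed in the second. Using $(AB)^*=B^*A^*$ together with $(\ad_X)^*=-\ad_X$, one finds that the adjoint of $[D,\ad_X]$ is $[D^*,\ad_X]$, whereas the adjoint of $\ad_{DX}$ is $-\ad_{DX}$; hence $[D^*,\ad_X]=-\ad_{DX}$. Adding the two relations yields
\[
[D+D^*,\ad_X]=\ad_{DX}-\ad_{DX}=0,
\]
which is the ad-invariance of $\phi$.

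I do not expect a genuine obstacle here: the argument is a short manipulation of adjoints, and the only point demanding care is the sign bookkeeping, in particular the skew-adjointness of $\ad_X$ extracted from~\eqref{eq:adinvdef}. Should one prefer to avoid the commutator formalism, the same conclusion follows from a direct computation of $g(\phi([X,Y]),Z)$ and $g([X,\phi Y],Z)$ for arbitrary $Z$: after expanding $\phi=D+D^*$, applying the derivation property of $D$, and moving $D^*$ across brackets via the adjoint relation combined with~\eqref{eq:adinvdef}, the terms involving $DY$ and $[X,DZ]$ cancel, leaving $g([DX,Y],Z)+g(Y,[DX,Z])$, which vanishes by a final use of~\eqref{eq:adinvdef}; nondegeneracy of $g$ then gives $\phi([X,Y])=[X,\phi Y]$.
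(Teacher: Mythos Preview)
Your proof is correct and uses the same ingredients as the paper's argument: the derivation identity for $D$ and the skew-adjointness of $\ad_X$ coming from~\eqref{eq:adinvdef}. The paper carries out the direct element-wise computation of $g(\phi[X,Y],Z)$ that you sketch in your final paragraph, whereas your main argument packages the same computation as the operator identity $[D,\ad_X]=\ad_{DX}$ plus its adjoint; these are really the same proof in different notation, not genuinely different routes.
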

\begin{proof}For any $X,Y,Z$ in $\mg$, ad-invariance of the metric and the fact that $D$  is a derivation give:
\begin{align*}
g(\phi[X,Y],Z)&=g([DX,Y],Z)+g([X,DY],Z)+g([X,Y],DZ)\\
&=g(DX,[Y,Z])+g(X,[DY,Z])+g(X,[Y,DZ])=g(\phi X,[Y,Z])=g([\phi X,Y],Z).
\end{align*}
Nondegeneracy implies $\phi[X,Y]=[\phi X,Y]$, and thus $\phi$ is ad-invariant.
\end{proof}

We will see that, given a Lie algebra with an ad-invariant metric, the uniqueness of the metric is related to whether every ad-invariant self-adjoint map is the self-adjoint part of a derivation. For this reason we introduce the following:
\begin{definition}
Given a Lie algebra $\g$ over $\K=\R,\C$ and an ad-invariant metric $g$, we say that $g$ is \emph{\rigid} if every ad-invariant self-adjoint endomorphism $\phi\colon\g\to\g$ has the form $D+D^{*_g}$, for some derivation $D$ of $\mg$. We say that $g$ is \emph{\weaklyrigid} if every ad-invariant self-adjoint endomorphism $\phi\colon\g\to\g$ has the form $D+D^*+\lambda\id$ for some $\lambda\in \K$ and some derivation $D$.
\end{definition}

\begin{remark}\label{rem:phiWZ}
It is straightforward to check that endomorphisms $\phi:\mg\to\mg$ that satisfy
\begin{equation}
\label{eqn:phiWZ}
\im\phi\subset Z,\quad \ker\phi\supset\g'
\end{equation}
are also derivations. Therefore, if all $g$-self-adjoint ad-invariant endomorphisms on $\mg$ satisfy~\eqref{eqn:phiWZ}, the metric $g$ is trivially \rigid.

In addition, if every ad-invariant map can be written as a linear combination of the identity and an endomorphism satisfying~\eqref{eqn:phiWZ}, then $g$ is trivially weakly solitary. This should be compared with \cite[Theorem 3.4]{MeRe93} (see also Remark~\ref{rem:ComparsionWithThm34MeRe}).
\end{remark}

\begin{remark}
\label{remark:tracess_not_rigid}
On a Lie algebra $\g$, the identity map $\id$ is ad-invariant and self-adjoint with respect to any ad-invariant metric. Therefore, given an ad-invariant \rigid metric, there is a derivation $D$ such that $D+D^*=\id$. This implies that $D$ has nonzero trace because $\Tr(D)+\Tr(D^*)=2\Tr(D)$. Thus, Lie algebras whose derivations are traceless cannot be \rigid.
\end{remark}

\begin{remark}
\label{rk:weakimpliesirred}
If all the derivations of a  Lie algebra $\g$ are traceless and $\g$ is reducible, then it does not admit any \weaklyrigid ad-invariant metrics.  Indeed, suppose that $\g$ has an ad-invariant metric $g$; by Proposition~\ref{prop:irr_is_irr}, it is also reducible as a metric Lie algebra, say $\g=\g_1\oplus \g_2$. Taking the projection on each factor, we obtain two self-adjoint ad-invariant maps $\pi_1,\pi_2\colon \g\to\g$. By the argument of Remark~\ref{remark:tracess_not_rigid}, neither $\pi_1$ nor $\pi_2$ have the form $D+D^*$; since they are linearly independent, $g$ is not \weaklyrigid.
\end{remark}

According to Corollary~\ref{cor:phihasoneortwoeigenvalues}, on an irreducible real Lie algebra, self-adjoint ad-invariant maps are mainly of three types: the identity, a complex structure, or a nilpotent endomorphism. The following examples show, for each one of these three types of self-adjoint ad-invariant maps, a Lie algebra endowed with an ad-invariant metric which fails to be \rigid due to such a map not being the self-adjoint part of a derivation.

\begin{example}\label{ex:simple}
Any simple Lie algebra $\g$ has an ad-invariant metric defined by the Killing form; since every ad-invariant map $\g\to\g$ is a multiple of the identity, $\g$ is \weaklyrigid. On the other hand, all derivations are inner, hence skew-adjoint and thus traceless, so $\g$ is not \rigid because of Remark~\ref{remark:tracess_not_rigid}.

Similarly, one sees that the Killing form on a nonsimple, semisimple Lie algebra is ad-invariant and not \weaklyrigid. Indeed, the projection on any simple ideal is an ad-invariant and self-adjoint linear map, but cannot be written in the form $D+D^*+\lambda \id$.
\end{example}

\begin{example}\label{ex:SolvableTracelessDerivations}
Consider the $11$-dimensional solvable Lie algebra $\g$ with structure constants
\begin{multline}
\label{eqn:firstinsalamonnotation}
(e^{3,10}, e^{2,10}, 0, -e^{4,10},
e^{39} +e^{9,10}, e^{34}-e^{6,10},\\
-e^{19}-e^{24}-e^{5,10}, e^{23}+e^{8,10},
e^{13}-e^{1,10}, 0, e^{19}-e^{26}-e^{35}+e^{48}).
\end{multline}
This notation, that will be used throughout the paper, means that $\mg^*$ has a basis $\{e^1, \ldots, e^{11}\}$ with the Chevalley-Eilenberg differential of each element determined by~\eqref{eqn:firstinsalamonnotation}, i.e. $de^1=e^3\wedge e^{10}$ and so on.

The Lie algebra $\mg$ admits  the ad-invariant metric $g$ given by:
\[e^{1}\odot e^{5}+e^{2}\odot e^{6}+e^{3}\odot e^{7}+e^{4}\odot e^{8}+e^{9}\otimes e^{9}+e^{10}\odot e^{11}.\]
A straightforward computation shows that $g$  is not \rigid, as all derivations are traceless, but it is \weaklyrigid. In particular, it is irreducible by Remark~\ref{rk:weakimpliesirred}. We also note that $\g'=[\g,\g']$, so this is an example of a nonnilpotent solvable Lie algebra with a \weaklyrigid ad-invariant metric.
\end{example}

\begin{example}\label{ex:CharacteristicallyNilpotentDIM12}
Consider the $12$-dimensional nilpotent Lie algebra $\mg$ given by:
\begin{multline*}
(0,0,e^{14},e^{12},e^{47}+e^{28}+e^{39}+e^{10,11},e^{18}+e^{4,10}-e^{3,11}-e^{9,11},\\
- e^{19}-e^{2,11}+e^{4,11},-e^{17}+e^{2,10}-e^{3,11},e^{13}-e^{2,11},e^{24}+e^{1,11},0,-e^{1,10}-e^{23}-e^{29}-e^{34}).
\end{multline*}
This is a \emph{characteristically nilpotent} Lie algebra, meaning that all derivations are nilpotent (see, for instance, \cite{AC01}).

It admits the ad-invariant metric
\[g=-e^1\odot e^5+e^2\odot e^6-e^3\odot e^7-e^4\odot e^8-e^9\otimes e^{9}+e^{10}\otimes e^{10}+e^{11}\odot e^{12}.\]
A straightforward computation shows that the only ad-invariant self-adjoint endomorphisms of $(\mg,g)$ are the multiples of the identity. Thus, $\g$ is \weaklyrigid but not \rigid by Remark~\ref{remark:tracess_not_rigid}.

Note that this Lie algebra is irreducible by Remark~\ref{rk:weakimpliesirred}.
\end{example}

\begin{example}
Take the complexification $\g^\C$ of the Lie algebra $\g$ of Example~\ref{ex:CharacteristicallyNilpotentDIM12}, and consider its underlying real Lie algebra $(\g^\C)^\R$.

One can check that $(\g^\C)^\R$ is irreducible and all its derivations are traceless. Thus, the almost complex structure, which is ad-invariant and self-adjoint, cannot be written as $D+D^*$, for any derivation $D$. Hence, this Lie algebra is neither \rigid nor \weaklyrigid.
\end{example}

\begin{example}
Given a Lie algebra $\g$, one can define a Lie algebra $\g \ltimes \g^*$, with $\g$ acting on $\g^*$ by the coadjoint representation, called the cotangent Lie algebra. It is well known that cotangent Lie algebras admit a canonical ad-invariant metric $g$ induced  by the natural pairing between $\g$ and $\g^*$ (see Section~\ref{sec:Cotangents}).

Fix $\g$ as in Example~\ref{ex:CharacteristicallyNilpotentDIM12}, and let $\phi\colon\g\ltimes\g^*\to\g\ltimes\g^*$ be the map defined by:
\[\phi(v,\alpha)=(0,g(v,\cdot)), \quad v\in\g,\ \alpha\in \g^*.\]
A straightforward computation shows that $\phi$ is self-adjoint with respect to the metric $g$, ad-invariant and nilpotent, since $\im\phi=\ker\phi$. Moreover, straightforward calculations show that there is no derivation $D$ of $\g\ltimes\g^*$ such that $\phi=D+D^*$, hence $\g\ltimes\g^*$ is not \rigid nor \weaklyrigid.
\end{example}

The following elementary fact will be used repeatedly.
\begin{lemma}
\label{lemma:derivationcomposedadinvariant}
Let $\g$ be a Lie algebra; let $\phi\colon \g\to\g$ be ad-invariant, and let $D\colon\g\to\g$ be a derivation. Then  $\phi\circ D$ is a derivation.
\end{lemma}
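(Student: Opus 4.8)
The plan is to verify the derivation identity for $\phi\circ D$ directly, namely $(\phi D)[X,Y]=[(\phi D)X,Y]+[X,(\phi D)Y]$ for all $X,Y\in\g$, reducing everything to the defining relation~\eqref{eqn:adinvariantphi} for $\phi$ together with the Leibniz rule for $D$. The starting point is to expand $(\phi D)[X,Y]=\phi\bigl(D[X,Y]\bigr)=\phi\bigl([DX,Y]+[X,DY]\bigr)=\phi([DX,Y])+\phi([X,DY])$, using that $D$ is a derivation and that $\phi$ is linear.

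The one genuine point, small as it is, is that the ad-invariance relation~\eqref{eqn:adinvariantphi}, which as stated moves $\phi$ into the \emph{second} slot of the bracket, in fact also moves it into the first slot. First I would record this symmetry: applying~\eqref{eqn:adinvariantphi} and the antisymmetry of the bracket gives
\[
\phi([X,Y])=-\phi([Y,X])=-[Y,\phi(X)]=[\phi(X),Y],
\]
so that $\phi([X,Y])=[\phi(X),Y]=[X,\phi(Y)]$ holds for all $X,Y$.

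With this in hand the computation closes immediately: applying the first-slot form to the term $\phi([DX,Y])=[\phi(DX),Y]=[(\phi D)X,Y]$ and the second-slot form to $\phi([X,DY])=[X,\phi(DY)]=[X,(\phi D)Y]$, one obtains $(\phi D)[X,Y]=[(\phi D)X,Y]+[X,(\phi D)Y]$, which is exactly the Leibniz rule for $\phi\circ D$. I expect no real obstacle here; the only thing to be careful about is choosing the correct slot of~\eqref{eqn:adinvariantphi} for each of the two summands, which is precisely what the preliminary symmetry observation supplies.
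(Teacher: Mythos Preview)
Your proof is correct and follows essentially the same direct computation as the paper: expand $D[X,Y]$ by the Leibniz rule and then pull $\phi$ inside each bracket using ad-invariance. The only difference is that you explicitly justify the first-slot version of~\eqref{eqn:adinvariantphi} via antisymmetry, which the paper uses tacitly.
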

\begin{proof}
For any $X,Y\in\mg$, we have
\[\phi(D[X,Y])=\phi([DX,Y]+[X,DY])=[\phi(DX),Y]+[X,\phi(DY)].\qedhere\]
\end{proof}

For every ad-invariant metric $g$, define
\[S_g=\{\phi\colon\g\to\g\st \phi \text{ ad-invariant}, \phi=\phi^{*_g}\}, \quad D_g=\{D+D^{*_g}\st D\in\Der\g\}.\]
We know from Lemma~\ref{lemma:symmetrized_derivation_ad_invariant} that $D_g\subset S_g$; we claim that the dimensions of $D_g$ and $S_g$ are independent of the ad-invariant metric $g$; in particular the codimension of $D_g$ in $S_g$ is constant.
\begin{lemma}
\label{lemma:ShDhconstant}
Let $g,h$ be ad-invariant metrics on $\g$ related by $h=g(\tau\cdot,\cdot)$. Then
\begin{equation}
\label{eqn:ShDh}
S_h=\{\tau^{-1}\phi\st \phi \in S_g\}, \quad D_h=\{\tau^{-1}\phi\st \phi\in D_g\}.
\end{equation}
In particular, $\dim S_g=\dim S_h$, $\dim D_g=\dim D_h$.
\end{lemma}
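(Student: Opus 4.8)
The plan is to exploit the structure of $\tau$ and to track how adjoints transform when passing from $g$ to $h$. Since $h=g(\tau\cdot,\cdot)$ is itself an ad-invariant metric, the endomorphism $\tau$ is $g$-self-adjoint and ad-invariant, and it is invertible because $h$ is nondegenerate. The first step is to establish the transformation formula for adjoints: for any linear map $f\colon\g\to\g$, comparing $h(fX,Y)=g(\tau fX,Y)$ with $h(X,f^{*_h}Y)=g(\tau X,f^{*_h}Y)=g(X,\tau f^{*_h}Y)$ and using $\tau=\tau^{*_g}$ together with nondegeneracy of $g$ yields $f^{*_g}\tau=\tau f^{*_h}$, that is, $f^{*_h}=\tau^{-1}f^{*_g}\tau$.

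For the identity $S_h=\tau^{-1}S_g$ I would argue as follows. Compositions of ad-invariant maps are ad-invariant, and so is the inverse of an invertible ad-invariant map (this is essentially the observation following~\eqref{eqn:adinvariantphi}); hence $\tau^{-1}\phi$ is ad-invariant if and only if $\phi$ is. For self-adjointness, note $(\tau^{-1})^{*_g}=(\tau^{*_g})^{-1}=\tau^{-1}$, so applying the transformation formula gives $(\tau^{-1}\phi)^{*_h}=\tau^{-1}(\tau^{-1}\phi)^{*_g}\tau=\tau^{-1}\phi^{*_g}$. Therefore $\tau^{-1}\phi$ is $h$-self-adjoint precisely when $\phi=\phi^{*_g}$. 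Combining the two equivalences gives $S_h=\{\tau^{-1}\phi\st\phi\in S_g\}$.

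The identity $D_h=\tau^{-1}D_g$ is where Lemma~\ref{lemma:derivationcomposedadinvariant} enters: since $\tau$ and $\tau^{-1}$ are ad-invariant, $\tau E$ is a derivation for every derivation $E$, and $\tau^{-1}D$ is a derivation for every derivation $D$. Given $E\in\Der\g$, set $D=\tau E$; then $D^{*_g}=(\tau E)^{*_g}=E^{*_g}\tau^{*_g}=E^{*_g}\tau$, so $\tau(E+E^{*_h})=\tau E+E^{*_g}\tau=D+D^{*_g}$, showing $E+E^{*_h}\in\tau^{-1}D_g$. Conversely, given $D\in\Der\g$, the derivation $E=\tau^{-1}D$ satisfies $E+E^{*_h}=\tau^{-1}D+(\tau^{-1}D)^{*_h}=\tau^{-1}D+\tau^{-1}D^{*_g}=\tau^{-1}(D+D^{*_g})$ by the self-adjoint computation of the previous paragraph. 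This gives both inclusions, hence $D_h=\{\tau^{-1}\phi\st\phi\in D_g\}$. Finally, since $\tau$ is invertible, the map $\phi\mapsto\tau^{-1}\phi$ is a linear automorphism of $\End\g$, so the displayed descriptions immediately force $\dim S_g=\dim S_h$ and $\dim D_g=\dim D_h$. The only real care needed is bookkeeping in the adjoint transformation formula and the correct use of the fact that pre- or post-composition with an ad-invariant map preserves both derivations and ad-invariant maps; no genuine obstacle arises beyond this.
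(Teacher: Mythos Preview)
Your proof is correct and follows essentially the same approach as the paper: both derive the adjoint transformation formula $f^{*_h}=\tau^{-1}f^{*_g}\tau$, then use it together with the ad-invariance and $g$-self-adjointness of $\tau$ and Lemma~\ref{lemma:derivationcomposedadinvariant} to match elements of $S_h,D_h$ with $\tau^{-1}$ times elements of $S_g,D_g$. The paper proves one inclusion and invokes symmetry in $g,h$ for the other, whereas you write out both directions explicitly, but this is a purely stylistic difference.
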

\begin{proof}
By construction, $\tau=\tau^{*_g}$. For any endomorphism $f$, we have
\[h(fX, Y)=g(\tau f X,Y)=g(fX,\tau Y)=g(X,f^*\tau Y)=h(X,\tau^{-1}f^*\tau Y).\]
Hence, the adjoint relative to $h$ is $f^{*_h}=\tau^{-1}f^*\tau$, or equivalently $\tau\phi^{*_h}=(\tau\phi)^{*_g}$.

Thus, if $\phi\in S_h$, we have $\tau\phi=(\tau\phi)^*$, i.e. $\tau\phi\in S_h$. Similarly, given $D+D^{*_h}\in D_h$, we have that
\[\tau(D+D^{*_h})=\tau D + (\tau D)^{*_g} \in D_g\]
since $\tau D$ is a derivation by Lemma~\ref{lemma:derivationcomposedadinvariant}. Reversing the roles of $g$ and $h$ gives the opposite inclusions.
\end{proof}

Since ad-invariant metrics $g$ are \rigid when $S_g=D_g$, we obtain:
\begin{proposition}
\label{prop:rigid_independent}
Given a Lie algebra $\g$ and an ad-invariant metric $g$, then $g$ is \rigid  if and only if every ad-invariant metric on $\mg$ is \rigid.
\end{proposition}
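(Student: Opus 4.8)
The plan is to reduce the statement to the dimension invariance already recorded in Lemma~\ref{lemma:ShDhconstant}. By definition, an ad-invariant metric $g$ is \rigid exactly when $S_g=D_g$. Since Lemma~\ref{lemma:symmetrized_derivation_ad_invariant} guarantees the inclusion $D_g\subseteq S_g$ for every ad-invariant metric, this equality of subspaces is equivalent to the purely numerical condition $\dim D_g=\dim S_g$. So the first step is to restate the \rigid property in this quantitative form, which turns the problem into comparing dimensions across different metrics.

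The only nontrivial implication is that if $g$ is \rigid then so is every ad-invariant metric $h$ on $\mg$; the converse is immediate, because $g$ itself is one of the ad-invariant metrics on $\mg$. To establish it, I would fix an arbitrary ad-invariant metric $h$ and write $h=g(\tau\cdot,\cdot)$ for the unique invertible endomorphism $\tau$ determined by $g$ and $h$; symmetry of both metrics makes $\tau$ automatically $g$-self-adjoint, and ad-invariance of both $g$ and $h$ makes $\tau$ ad-invariant, so that the hypotheses of Lemma~\ref{lemma:ShDhconstant} are in force. That lemma then supplies directly the equalities $\dim S_h=\dim S_g$ and $\dim D_h=\dim D_g$. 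Combining these with the assumption $\dim S_g=\dim D_g$ yields $\dim S_h=\dim D_h$, and together with $D_h\subseteq S_h$ this forces $S_h=D_h$, i.e. $h$ is \rigid.

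I expect essentially no obstacle here, since Lemma~\ref{lemma:ShDhconstant} already performs the substantive work: it transports $S_g$ and $D_g$ onto $S_h$ and $D_h$ by the bijective map $\phi\mapsto\tau^{-1}\phi$, the bijectivity following from invertibility of $\tau$. The only points meriting a word of care are this bijectivity and the universally valid inclusion $D_g\subseteq S_g$, which is precisely what makes the codimension of $D_g$ in $S_g$ a well-defined nonnegative integer, constant along the $\Aut(\mg)$-orbit of ad-invariant metrics; the \rigid condition is then the vanishing of this codimension, manifestly a metric-independent statement.
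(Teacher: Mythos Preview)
Your argument is correct and is exactly the approach of the paper: the proposition is stated as an immediate consequence of Lemma~\ref{lemma:ShDhconstant}, using that $g$ is \rigid iff $S_g=D_g$, which by the inclusion $D_g\subseteq S_g$ reduces to the dimension equality that the lemma shows is metric-independent. One small wording issue: in your final sentence you say the codimension is constant ``along the $\Aut(\mg)$-orbit,'' but the point (and what you actually prove) is that it is constant across \emph{all} ad-invariant metrics, not just within a single orbit.
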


From now on and in view of the previous result, we will say that a Lie algebra $\mg$ is \rigid if one (and thus every) ad-invariant metric on $\mg$ is \rigid.

Recall that any Lie algebra with an ad-invariant metric is the orthogonal direct sum of irreducible ideals.
\begin{theorem}
\label{thm:rigidimpliesuniqueness}
Let $\g$ be a Lie algebra over $\K=\R,\C$ with an ad-invariant metric $g$. Then
\begin{enumerate}[label=(\roman*)]
\item \label{item:uniquecomplex} for $\K=\C$, $g$ is \rigid if and only if  every ad-invariant metric on $\g$ has the form $ g(\psi\cdot,\psi\cdot)$ for some $\psi\in\Aut(\g)$;
\item \label{item:uniquereal} for $\K=\R$ and $\g$ irreducible, $g$ is \rigid
if and only if every ad-invariant metric on $\g$ has the form $\pm g(\psi\cdot,\psi\cdot)$ for some $\psi\in\Aut(\g)$;
\item \label{item:almostuniquereal} for $\K=\R$ and $\g$ reducible, fix a decomposition $\g=\g_1\oplus\dots \oplus \g_k$ into irreducible orthogonal ideals, and decompose accordingly $g$ as $g_1+\dotsc + g_k$; then $g$ is \rigid if and only if every ad-invariant metric on $\g$ has the form
\[h(\psi\cdot,\psi\cdot), \quad \psi\in\Aut(\g),\; h=\pm g_1\pm \dots \pm g_{k}.\]
\end{enumerate}
\end{theorem}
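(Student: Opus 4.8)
The plan is to recast the entire statement as a fact about the action of $\Aut(\g)$ on the set $S_g^*$ of \emph{invertible} ad-invariant self-adjoint endomorphisms, identified with the ad-invariant metrics via $\phi\mapsto g(\phi\cdot,\cdot)$. Under this identification $g\leftrightarrow\id$, and the pullback $g_\phi(\psi\cdot,\psi\cdot)$ corresponds to $\phi\mapsto\psi^*\phi\psi$, so the orbit of $\id$ is precisely the set of metrics $g(\psi\cdot,\psi\cdot)$, and each of \ref{item:uniquecomplex}--\ref{item:almostuniquereal} asserts that a prescribed finite family of orbits covers $S_g^*$. The infinitesimal input is that, differentiating the orbit map $\psi\mapsto\psi^*\psi$ along $\psi_t=\exp(tD)$ with $D\in\Der\g=\operatorname{Lie}\Aut(\g)$, one gets $D+D^*$; hence the tangent space to the orbit of $\id$ is exactly $D_g$, and $g$ is \rigid (i.e. $D_g=S_g$) if and only if that orbit is open. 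Because being \rigid is independent of the metric (Proposition~\ref{prop:rigid_independent}, Lemma~\ref{lemma:ShDhconstant}), the same computation at an arbitrary base point yields the key reformulation: $g$ is \rigid if and only if every $\Aut(\g)$-orbit in $S_g^*$ is open.

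Since $S_g^*$ is an open, hence locally connected, subset of the vector space $S_g$, an open orbit has open complement and is therefore also closed; thus "every orbit open" is equivalent to "every orbit is a union of connected components of $S_g^*$". This reduces the forward (\rigid $\Rightarrow$ uniqueness) direction of each part to computing $\pi_0(S_g^*)$ and exhibiting a representative of the prescribed form in each component. For all the reverse implications a single uniform argument will serve: if the prescribed family of metrics exhausts $S_g^*$, then $S_g^*$ is a finite union of $\Aut(\g)$-orbits; by Lemma~\ref{lemma:ShDhconstant} every orbit of an ad-invariant metric has the same dimension $\dim D_g$, and finitely many immersed submanifolds of dimension $<\dim S_g$ have measure zero and cannot cover the open set $S_g^*$. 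Hence some, and therefore every, such orbit has dimension $\dim S_g$, forcing $D_g=S_g$.

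It then remains to compute the components. By antisymmetry of the bracket an ad-invariant $\phi$ satisfies $\phi[X,Y]=[\phi X,Y]=[X,\phi Y]$, so the ad-invariant endomorphisms form the centroid $\Gamma(\g)$, an associative algebra whose self-adjoint part is $S_g$. If $\g$ is irreducible, $\Gamma(\g)$ has no nontrivial idempotent $e$ (else $\g=\im e\oplus\ker e$ splits into ideals), so it is local and an element is invertible iff its residue is nonzero. By Corollary~\ref{cor:phihasoneortwoeigenvalues} the residue field is $\R$ or $\C$. In the $\C$ case (all of $\K=\C$, and case~\ref{it:rphi} with $b\neq0$) the residue map carries $S_g^*$ onto $\C\setminus\{0\}$ with vector-space fibres, so $S_g^*$ is connected and \rigid forces a single orbit: every metric is $g(\psi\cdot,\psi\cdot)$, giving \ref{item:uniquecomplex} and the $b\neq0$ subcase of \ref{item:uniquereal}. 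In the $\R$ case the residue $\tfrac1n\Tr\phi$ splits $S_g^*$ into the two half-spaces $\{\Tr\phi\gtrless0\}$, which contain $\id$ and $-\id$; the \rigid condition then gives exactly the metrics $\pm g(\psi\cdot,\psi\cdot)$, completing \ref{item:uniquereal}.

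For the reducible real case \ref{item:almostuniquereal}, the metric $h=\pm g_1\pm\dots\pm g_k$ corresponds to the sign operator $\epsilon_\sigma=\sum_i\sigma_i\pi_i$, $\sigma\in\{\pm1\}^k$, where $\pi_i\in\Gamma(\g)$ is the projection onto $\g_i$, and these lie in $S_g^*$. The goal is to show that every connected component of $S_g^*$ contains some $\epsilon_\sigma$, for then \rigid forces each metric to be $h(\psi\cdot,\psi\cdot)$ of the stated form, the converse being the uniform argument above. This component count is the crux and the expected main obstacle: when the factors are pairwise non-isomorphic the centroid is the product $\prod_i\Gamma(\g_i)$ of local algebras, $\pi_0(S_g^*)$ is governed blockwise by the irreducible analysis (a sign for each $\R$-type factor, none for each $\C$-type factor), and the $\epsilon_\sigma$ manifestly exhaust the components; but isomorphic factors contribute off-diagonal $\Hom$-spaces, $\Gamma(\g)$ is no longer local, and $\Aut(\g)$ may permute the factors, so several $\epsilon_\sigma$ become identified and the naive count breaks. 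The delicate step is to verify that even then no component of $S_g^*$ avoids the list $\{\epsilon_\sigma\}$, most cleanly by passing to $\Gamma(\g)/\!\operatorname{rad}\Gamma(\g)$ and tracking the action of automorphisms on its idempotents.
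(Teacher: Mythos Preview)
Your treatment of \ref{item:uniquecomplex}, \ref{item:uniquereal}, and of all the reverse implications is essentially correct and close in spirit to the paper's: both arguments identify ad-invariant metrics with $S_g^*$, establish that solitary is equivalent to every orbit being open (hence a union of connected components), and then compute $\pi_0(S_g^*)$ in the irreducible case. The paper carries out this last step by an explicit convex path $t\,\id+(1-t)\phi$ rather than via the residue field of the centroid, but the content is the same. Your finite-orbit dimension count for the reverse direction is likewise equivalent to the paper's ``finitely many orbits, hence one is open, then differentiate''.

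The genuine gap is in \ref{item:almostuniquereal}. You correctly flag that isomorphic factors make $\Gamma(\g)$ non-local and identify this as the crux, but you do not actually verify that every component of $S_g^*$ contains some $\epsilon_\sigma$; the proposal ends with a suggestion to pass to $\Gamma(\g)/\operatorname{rad}\Gamma(\g)$ and stops. This route can be completed---one checks that a quaternionic residue forces $\g$ abelian, that the induced involution on each matrix block $M_m(\K)$ is plain transpose (since $J^*=J$), and then invokes Sylvester's law of inertia to see that the $2^m$ sign diagonals meet every signature class---but none of that is in your argument, and it is substantially more work than what you wrote.

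The paper sidesteps this entirely. Given a second ad-invariant metric $g'$, it takes the associated orthogonal decomposition $\g=\g_1'\oplus\dots\oplus\g_{k'}'$ into irreducibles and applies the Krull--Schmidt theorem for Lie algebras: $k'=k$ and some $\psi\in\Aut(\g)$ carries one decomposition to the other. Replacing $g'$ by $g'(\psi^{-1}\cdot,\psi^{-1}\cdot)$, one may assume $g'$ is block-diagonal for the fixed decomposition $\g_1\oplus\dots\oplus\g_k$; then part \ref{item:uniquereal} on each factor gives $g'|_{\g_i}=\pm g_i(\psi_i\cdot,\psi_i\cdot)$, which is exactly the claimed form. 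This reduces \ref{item:almostuniquereal} to \ref{item:uniquereal} in one stroke and avoids any global analysis of $\Gamma(\g)$.
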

\begin{proof}
First we shall prove the ``if'' part of each item.
Let $\mathcal{K}$ denote the set of ad-invariant metrics on $\g$, and assume that $\Aut(\g)$ acts on $\mathcal{K}$ with finitely many orbits. Then at least one orbit is open. Let $h$ be an ad-invariant metric in the open orbit and let $\phi$ be an ad-invariant self-adjoint map. Consider the family of ad-invariant metrics
\[h_t(X,Y)=h((t\phi+\id)X,Y).\]
For small $t$, $h_t$ is in the $\Aut(\mg)$-orbit of $h$, so there exists a curve of Lie algebra automorphisms $\psi(t)$ such that $\id+t\phi=\psi(t)^*\psi(t)$; we can assume $\psi(0)=\id$. Therefore, taking the derivative at $t=0$, we get
\[\phi = D^*+D,\]
where $D=\psi'(0)$ is a derivation. This shows that $h$ is solitary; by Proposition~\ref{prop:rigid_independent}, every metric is solitary.

Conversely, assume that $g$ is solitary, and write $\g$ as an orthogonal sum of irreducible ideals, $\g=\g_1\oplus\dots \oplus \g_k$. If $g'$ is another ad-invariant metric, determining an analogous decomposition $\g=\g'_1\oplus \dots \oplus \g'_{k'}$, the Lie algebra version of the  Krull-Schmidt theorem (see \cite{FisherGrayHydon}) implies that $k'=k$ and up to permuting the factors we can assume $\g'_i\cong\g_i$; this means that some automorphism of $\g$ maps one decomposition into the other. In other words,  $\g=\g_1\oplus\dots \oplus \g_k$ is also an orthogonal sum relative to some metric in the $\Aut(\g)$-orbit of $g'$. Thus, it suffices to prove uniqueness of the metric (up to sign, in the real case) on each irreducible factor $\g_i$.

Assume from now on that $\g$ is irreducible and solitary. Let $\Z_2$ act on $\mathcal{K}$ by an overall sign change. We must prove that $\Aut(\g)$ acts transitively on $\mathcal{K}$ (respectively, $\mathcal{K}/\Z_2$). Since  $\Aut(\g)$-orbits in $\mathcal{K}$ are open, it suffices to prove that $\mathcal{K}$ (respectively $\mathcal{K}/\Z_2$) is connected.

Let $h$ be an ad-invariant metric. Then $h(X,Y)=g(\phi X,Y)$ for some ad-invariant, self-adjoint $\phi$. Since we assume $\g$ to be irreducible, we know from Proposition~\ref{prop:eigenvaluesofphi} that $\phi$ cannot have more than one real eigenvalue; notice that $0$ is not an eigenvalue because $\phi$ is invertible. Up to changing the sign of $h$, we can assume that there are no negative eigenvalues.

Since
\[\det (t\id + (1-t)\phi)=0 \iff \det \left(\phi + \frac{t}{1-t}\id\right)=0,\]
we have that convex combinations of $\id$ and $\phi$ are nondegenerate. Thus, we obtain a curve
\[h_t(X,Y)=g((t\id + (1-t)\phi)X,Y)\]
connecting $g$ and $h$ in $\mathcal{K}$. Notice that in order to achieve this we might have had to change the sign of $h$, but this is sufficient to prove connectedness of $\mathcal{K}/\Z_2$.

In the complex case, writing $i\id=D+D^*$, we see that the metrics $e^{i\theta}h$ are in the same $\Aut(\g)$-orbit.
\end{proof}

The analogous result for Lie algebras with a weakly solitary ad-invariant metrics is complicated by the fact that the Lie algebra may also admit an ad-invariant metric which is not weakly solitary, as in the following:
\begin{example}
\label{example:Tstarsu2}
Consider the cotangent of $\su(2)$, namely the Lie algebra
\[(e^{23},- e^{13},e^{12},e^{26}-e^{35},e^{34}-e^{16},-e^{24}+e^{15}).\]
This Lie algebra admits two inequivalent ad-invariant metrics, namely
\[g=e^1\odot e^4+e^2\odot e^5+e^3\odot e^6, \qquad
 h=g+e^1\otimes e^1+e^2\otimes e^2+e^3\otimes e^3;\]
every ad-invariant metric is a linear combination of $g$ and $h$. This implies that $S_{g}$ is two-dimensional, and we can check that $D_{g}=\Span{\id}$. However $\id$ is not in $D_{h}$; it follows that $h$ is weakly solitary, whilst $g$ is not weakly solitary.
\end{example}

The next results will consider the action of $\Aut(\g)\times \K^*$ on the space of ad-invariant metrics. Namely, for an ad-invariant metric $g$ on $\mg$, the action is given by $kg(\psi\cdot,\psi\cdot)$ with $\psi\in \Aut(\g)$ and $k\in\K^*$.

\begin{lemma}
\label{lemma:twoorbits}
Let $\g$ be irreducible and let $g_1$ be a weakly solitary ad-invariant metric. If $g_2$ is an ad-invariant metric on $\g$ which is not of the form $kg_1(\psi\cdot,\psi\cdot)$ for an automorphism $\psi$ and $k\in\K$, then $g_2$ is not weakly solitary. Moreover, $g_2$ is unique up to a change of sign and an automorphism.
\end{lemma}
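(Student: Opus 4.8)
The plan is to reduce the statement to a question about a single linear functional on the space of self-adjoint ad-invariant operators, and then to orbit geometry. First I would observe that $g_1$ cannot be solitary: otherwise Theorem~\ref{thm:rigidimpliesuniqueness} would force every ad-invariant metric, in particular $g_2$, to have the form $\pm g_1(\psi\cdot,\psi\cdot)$, against the hypothesis. Hence $g_1$ is weakly solitary but not solitary, so $\id\notin D_{g_1}$ and $\dim S_{g_1}=\dim D_{g_1}+1$; by Lemma~\ref{lemma:ShDhconstant} this codimension equals $1$ for every ad-invariant metric, and therefore $g$ is weakly solitary if and only if $\id\notin D_g$. Writing $g_2=g_1(\phi\cdot,\cdot)$ with $\phi\in S_{g_1}$ invertible and using $D_{g_2}=\phi^{-1}D_{g_1}$, this becomes
\[
g_2\text{ weakly solitary}\iff \phi\notin D_{g_1},
\]
while $g_2$ lies in the orbit $O_1:=\{k\,g_1(\psi\cdot,\psi\cdot)\st \psi\in\Aut(\g),\,k\in\K^*\}$ if and only if $\phi=k\psi^{*}\psi$ for some $\psi$ and $k$.

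The geometric core is the projection $\ell\colon S_{g_1}=D_{g_1}\oplus\K\id\to\K\id$ with kernel $D_{g_1}$. Differentiating the $\Aut(\g)$-action along $\psi_s=\exp(sD)$ and using that $\phi D$ is a derivation whenever $\phi$ is ad-invariant (Lemma~\ref{lemma:derivationcomposedadinvariant}), I would show that the tangent space to every $\Aut(\g)$-orbit, read inside $S_{g_1}$, is exactly $D_{g_1}$. Consequently $\ell$ is constant along $\Aut(\g)$-orbits and scales linearly under $\K^*$; the $\Aut(\g)\times\K^*$-orbit of a metric has tangent space $D_{g_1}+\K\phi$, hence is open precisely when $\phi\notin D_{g_1}$, i.e. when the metric is weakly solitary; and $\ell$ separates the weakly solitary metrics ($\ell\neq0$) from the rest ($\ell=0$, equivalently $\phi\in D_{g_1}$).

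For the first assertion I would argue by contradiction: if $g_2$ were weakly solitary then $\phi\notin D_{g_1}$, and I must show $g_2\in O_1$. Since the weakly solitary locus $W=\{\det\neq0,\ \ell\neq0\}$ is covered by open $\Aut(\g)\times\K^*$-orbits, it suffices to join $g_1$ and $g_2$ within $W$. Over $\K=\C$ this is immediate, because $W$ is the complement of a hypersurface in $S_{g_1}\cong\C^m$ and hence connected, so $W=O_1$ and no weakly solitary $g_2$ can lie outside $O_1$. Over $\K=\R$ the locus $W$ need not be connected, and I would use Corollary~\ref{cor:phihasoneortwoeigenvalues} to write $\phi=\phi_S+\phi_N$ with $\phi_S=a\id$ or $\phi_S=a\id+bJ$, then match the sign of the eigenvalue of $\phi$ with that of $\id$ by a sign change; the remaining identification is the delicate point discussed below.

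For the \emph{moreover} part, if $g_2$ is not weakly solitary then $\id\in D_{g_2}$, so some derivation $D$ satisfies $D+D^{*_{g_2}}=\id$ and thus has nonzero trace; the flow $\exp(sD)$ then rescales $g_2$ by $e^{s}$, so positive scalings are realized by automorphisms and the $\K^*$-ambiguity collapses to a sign. Any two non-weakly-solitary metrics correspond to invertible elements of $D_{g_1}$, whose $\Aut(\g)\times\K^*$-orbits are open in $D_{g_1}\cap\mathcal{K}$; the same connectedness argument (up to sign over $\R$) shows these orbits coincide, giving uniqueness up to an automorphism and a sign. The main obstacle is exactly the real case of these connectedness steps: over $\R$ the relevant loci split according to the sign of the eigenvalue of $\phi$, and one must produce an automorphism reversing this sign in order to merge the two components — as the map $(X,\alpha)\mapsto(X,-\alpha)$ does in Example~\ref{example:Tstarsu2}, where it sends $g_1$ to a metric whose associated operator has negative eigenvalue. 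Establishing the existence of such a sign-reversing automorphism in general, using irreducibility and the indefiniteness of ad-invariant metrics, is where the real work lies.
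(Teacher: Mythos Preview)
Your reduction is correct and matches the paper's: once $g_1$ is not solitary, $\dim S_g=\dim D_g+1$ for every $g$, so weakly solitary means exactly $\id\notin D_g$, and the tangent computation showing each $\Aut(\g)$-orbit has tangent space $D_{g_1}$ is right. Over $\C$ your hypersurface-complement argument is a clean alternative to the paper's affine-line construction and finishes the proof.

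The gap is the real case, and you have misdiagnosed what is missing. You conclude that one ``must produce an automorphism reversing this sign in order to merge the two components''; no such automorphism is needed, and in general none exists. For the first assertion the sign flip is already available because $-1\in\K^*$; for the \emph{moreover} clause it is built into the statement (``up to a change of sign''). The actual obstacle is proving connectedness of the relevant loci modulo $\Z_2$, and your sketch does not do this: even after matching signs, the straight segment from $\id$ to $\phi$ can cross the hyperplane $\ell=0$ (your projection is linear, so $\ell(t\id+(1-t)\phi)$ may vanish for some $t\in(0,1)$), and the non-weakly-solitary locus $D_{g_1}\cap\{\det\neq0\}$ is not obviously connected over $\R$. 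The paper handles this differently. It first fixes $g\in\mathcal{K}_0$ and shows, via Corollary~\ref{cor:phihasoneortwoeigenvalues}, that for any other $h=g(\tau\cdot,\cdot)\in\mathcal{K}_0$ the affine line $t\mapsto t\id+(1-t)\tau$ stays inside $D_g$ and, after a single sign flip of $h$, avoids $\det=0$; since $\Aut(\g)$ acts with open orbits on $\mathcal{K}_0$, this gives transitivity on $\mathcal{K}_0/\Z_2$. It then shows $\mathcal{K}_0$ does not disconnect $\mathcal{K}/\Z_2$ by a second, transverse affine line $s\mapsto s\phi_t-(1-s)\phi_{-t}$, where $\phi_t=\id+t\psi$ and $S_g=D_g\oplus\Span{\psi}$. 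These two explicit path constructions replace the global connectedness you are trying to get from a nonexistent sign-reversing automorphism.
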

\begin{proof}
If $g_1$ is solitary the statement follows from Theorem~\ref{thm:rigidimpliesuniqueness}; suppose otherwise. Then for every ad-invariant metric $g$,  the codimension of $D_g$ in $S_g$ is one because of Lemma~\ref{lemma:ShDhconstant}. Therefore, since $\mg$ is not solitary, $g$ is weakly solitary if and only if $\id\notin D_g$.

Let $\mathcal{K}$ be the space of ad-invariant metrics on $\mg$, and let
\[\mathcal{K}_0=\{g\in\mathcal{K}\st \id\in D_g\}\]
be the set of those which are not weakly solitary. Arguing as in Theorem~\ref{thm:rigidimpliesuniqueness}, we see that weakly solitary metrics are exactly the elements of $\mathcal{K}$ whose $\Aut(\g)\times\K^*$-orbit is open.

We need to show that $\Aut(\g)\times\K^*$ acts transitively on $\mathcal{K}\smallsetminus\mathcal{K}_0$ and that if  $\mathcal{K}_0$ is nonempty then $\Aut(\g)\times\Z_2$ acts transitively on it. Notice that in the case where $\mathcal{K}_0$ is empty, the fact that $\Aut(\g)\times\K^*$ acts transitively on $\mathcal{K}$ follows from the argument of Theorem~\ref{thm:rigidimpliesuniqueness}.

Now assume that $g$ and $h=g(\tau\cdot,\cdot)$ are in $\mathcal{K}_0$, i.e. such that $D_g,D_h$ contain the identity. Write $*$ instead of $*_g$.

By~\eqref{eqn:ShDh}, since $\id\in D_h$, $\tau$ belongs to $D_g$. Conversely, for every $\phi\in D_g$, we have that $h_\phi=g(\phi\cdot,\cdot)$ is such that $D_{h_{\phi}}$ contains the identity, i.e. it is in $\mathcal{K}_0$.

Now consider the affine line $t\mapsto \phi_t=t\id+(1-t)\tau$ in $D_g$. By the above observation, we have that $D_{h_{\phi_t}}$ contains the identity. Arguing as in Theorem~\ref{thm:rigidimpliesuniqueness}, up to changing the sign of $h$, we can assume that $h_{\phi_t}$ is invertible for all $t\in [0,1]$. This shows that there is a path inside $\mathcal{K}_0$ connecting $g$ and $\pm h$, i.e. $\mathcal{K}_0/\Z_2$ is connected.

Now write $S_g=D_g+\Span{\psi}$, and consider the affine line $\phi_t=\id+t\psi$. Suppose that for some $t$ we have $\id\in D_{h_{\phi_t}}$; then $\phi_t\in D_g$ by~\eqref{eqn:ShDh}, which implies $t=0$. Since $\psi\notin D_g$, $\{h_{\phi_t}\}$ is a section for the action of $\Aut(\g)$, and thus non-weakly-solitary $\Aut(\g)$-orbits are isolated. Since they form a connected space up to sign, there is either only one such orbit, or two related by a sign change. By a dimensional count, $\Aut(\g)$ acts with open orbits on $\mathcal{K}_0$, so it acts transitively on $\mathcal{K}_0/\Z_2$.

It remains to show that $\mathcal{K}_0/\Z_2$ does not disconnect $\mathcal{K}/\Z_2$. In order to see this, it suffices to show that some $\phi_t$ is in the same component as $\phi_{-t}$ (equivalently,  $-\phi_{-t}$). But this follows from taking the affine line
\[s\mapsto s\phi_t - (1-s)\phi_{-t}=s(\id+t\phi)-(1-s)(\id-t\phi)=(2s-1)\id+t\psi,\]
which is never in $D_g$ for $t\neq0$. This means that one can go from $\phi_t$ to $-\phi_{-t}$ without intersecting $\mathcal{K}_0$. Since $\Aut(\g)\times\K^*$ acts with open orbits on the connected space $(\mathcal{K}\smallsetminus\mathcal{K}_0)/\Z_2$, we see that the action is transitive.
\end{proof}

\begin{remark}
Given an irreducible Lie algebra with an ad-invariant metric $g$ such that $\dim S_g=\dim D_g+1$, the proof of Lemma~\ref{lemma:twoorbits}  shows that there exists a weakly solitary ad-invariant metric.
\end{remark}

\begin{proposition}
\label{prop:weaksol}
Let $\g$ be an irreducible Lie algebra over $\K=\R,\C$ with an ad-invariant metric $g$. Relative to the action of $\Aut(\g)\times\K^*$ on the set $\mathcal{K}$ of ad-invariant metrics on $\g$, we have:
\begin{enumerate}[label=(\roman*)]
\item $g$ is weakly solitary if and only if its orbit is open;
\item\label{it:trfree} if all derivations of $\g$ are traceless and $g$ is weakly solitary, $\mathcal{K}$ contains exactly one orbit;
\item if $\g$ admits a derivation $D$ with nonzero trace and $g$ is weakly solitary, $\mathcal{K}$ contains exactly two orbits, one containing $g$, the other containing $h=g((D+D^{*_g})\cdot,\cdot)$, which is not weakly solitary.
\end{enumerate}
\end{proposition}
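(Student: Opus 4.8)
The three parts rest on the dictionary $\phi\mapsto g_\phi:=g(\phi\cdot,\cdot)$, which identifies $\mathcal{K}$ with the open set of invertible elements of the vector space $S_g$, so that $T_g\mathcal{K}\cong S_g$; recall also that $c:=\dim S_g-\dim D_g$ is independent of the chosen ad-invariant metric by Lemma~\ref{lemma:ShDhconstant}, so that $g$ weakly solitary forces $c\in\{0,1\}$. For item (i) the plan is to identify the tangent space to the $\Aut(\g)\times\K^*$-orbit of $g$. Taking a curve $t\mapsto(\psi(t),k(t))$ through $(\id,1)$ with $\psi'(0)=D\in\Der(\g)$ and $k'(0)=\lambda$, and differentiating $k(t)\,g(\psi(t)\cdot,\psi(t)\cdot)$ at $t=0$, I obtain $g\big((D+D^{*_g}+\lambda\id)\cdot,\cdot\big)$. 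Hence, under $T_g\mathcal{K}\cong S_g$, the tangent space of the orbit at $g$ is exactly $D_g+\K\id$. Since the orbit of a Lie group action is open precisely when the orbit map is a submersion at a point, the orbit is open if and only if $D_g+\K\id=S_g$, which is the definition of $g$ being weakly solitary.

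For (ii), assume all derivations are traceless. I would first show that \emph{every} ad-invariant metric is weakly solitary: for any $D\in\Der(\g)$ and any ad-invariant metric $h$ one has $\Tr(D+D^{*_h})=2\Tr D=0$, so $D_h$ consists of trace-zero maps and $\id\notin D_h$; as $\g$ is not solitary (Remark~\ref{remark:tracess_not_rigid}) we get $c=1$, whence $\dim(D_h\oplus\K\id)=\dim D_h+1=\dim S_h$ and $S_h=D_h\oplus\K\id$. By part (i) every orbit is then open, so the orbit of $g$ is open and closed, and it suffices to connect an arbitrary $h=g_\phi$ to $g$ within $\mathcal{K}$ modulo the sign in $\K^*$. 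I would do this along $t\mapsto g\big(((1-t)\phi+t\,\id)\cdot,\cdot\big)$: over $\C$ the determinant vanishes for only finitely many $t$, and $\C$ minus finitely many points is path-connected; over $\R$, Corollary~\ref{cor:phihasoneortwoeigenvalues} gives that $\phi$ has either a single real eigenvalue $\lambda$ (after possibly replacing $h$ by $-h$, the only eigenvalue of $(1-t)\phi+t\,\id$ is $(1-t)\lambda+t>0$) or a conjugate pair $a\pm ib$ with $b\neq0$ (so $(1-t)\phi+t\,\id$ has nonreal, hence nonzero, eigenvalues for $t\in[0,1)$). Either way the path stays in $\mathcal{K}$, forcing a single orbit.

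For (iii), let $D$ be a derivation with $\Tr D\neq0$ and set $\psi_0=D+D^{*_g}\in D_g$, so $\Tr\psi_0=2\Tr D\neq0$. The crucial step is that $\psi_0$ is invertible: by Corollary~\ref{cor:phihasoneortwoeigenvalues}, $\psi_0=\phi_S+\phi_N$ with $\phi_N$ nilpotent and $\phi_S$ a multiple of $\id$ (for $\K=\C$) or $a\id+bJ$ with $J^2=-\id$ (for $\K=\R$); since $\phi_N$ contributes nothing to the trace, $\Tr\psi_0\neq0$ forces the scalar part to be nonzero, so all eigenvalues of $\psi_0$ are nonzero. Thus $h=g(\psi_0\cdot,\cdot)$ is an ad-invariant metric, and by Lemma~\ref{lemma:ShDhconstant} one has $D_h=\psi_0^{-1}D_g\ni\psi_0^{-1}\psi_0=\id$. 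We may assume $\g$ is not solitary, since otherwise Theorem~\ref{thm:rigidimpliesuniqueness} already yields a single orbit; then $c=1$, and $\id\in D_h$ says precisely that $h$ is not weakly solitary, so $h$ lies outside the open orbit of $g$. Finally, Lemma~\ref{lemma:twoorbits} shows that the non-weakly-solitary metrics form a single orbit up to sign and automorphism, so $\mathcal{K}$ consists of exactly two $\Aut(\g)\times\K^*$-orbits, that of $g$ and that of $h$.

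The main obstacle I anticipate is the invertibility of $\psi_0$ in (iii), together with the control of the eigenvalues along the connecting path in (ii): both hinge on the rigid structure of self-adjoint ad-invariant maps on an irreducible Lie algebra from Corollary~\ref{cor:phihasoneortwoeigenvalues} (a single real eigenvalue, or a conjugate pair, up to a nilpotent summand). A secondary subtlety is the reduction to the non-solitary case in (iii), which is what makes $\id\in D_h$ genuinely detect the failure of weak solitariness rather than being automatic.
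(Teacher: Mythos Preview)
Your proposal is correct and follows essentially the same approach as the paper: the tangent-space computation for (i), the observation that traceless derivations force $\id\notin D_h$ for all $h$ together with a path-connectedness argument for (ii), and the construction of $h$ via $\tau=D+D^{*_g}$ with invertibility from Corollary~\ref{cor:phihasoneortwoeigenvalues} and the appeal to Lemma~\ref{lemma:twoorbits} for (iii). Your write-up is more detailed than the paper's terse proof (which calls (i) ``obvious'' and leaves the orbit count in (iii) implicit in Lemma~\ref{lemma:twoorbits}), and your explicit reduction to the non-solitary case in (iii) is a point the paper glosses over.
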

\begin{proof}
The first item is obvious.

For the second item, observe that an ad-invariant metric $h$ is in $\mathcal{K}_0$ if and only if $\id=D+D^{*_h}$, which implies $\Tr D\neq0$.

For the third item, write $\tau=D+D^{*_g}$; then $\Tr \tau=2\Tr D\neq0$, which by Corollary~\ref{cor:phihasoneortwoeigenvalues} implies that $\tau$ is invertible, so $h=g(\tau\cdot,\cdot)$ is a well-defined ad-invariant metric. Then $D_h=\tau^{-1}D_g$ contains $\tau^{-1}\tau=\id$, so $h$ is not weakly solitary.
\end{proof}

\begin{remark}\label{rem:ComparsionWithThm34MeRe}
Let $g$ be an ad-invariant metric on a Lie algebra $\mg$ such that every self-adjoint ad-invariant map of $\mg$ can be written as a linear combination of the identity and an endomorphism satisfying~\eqref{eqn:phiWZ}. According to \cite[Theorem 3.4]{MeRe93}, the ad-invariant metric is unique up to automorphisms and rescaling. We can recover this result as follows. Let $W_g$ be the space of self-adjoint maps $\g\to\g$ satisfying~\eqref{eqn:phiWZ}. Then $W_g\subset D_g\subset S_g
=W_g+\Span{\id}$. Therefore, either $W_g=D_g$, implying that derivations are traceless and~\ref{it:trfree} in Proposition~\ref{prop:weaksol} applies, or $D_g=S_g$ and Theorem~\ref{thm:rigidimpliesuniqueness} implies uniqueness of the metric.
\end{remark}

From Proposition~\ref{pro:decomposable} we know that a reducible Lie  algebra $\g=\g_1\oplus\g_2$ admits an ad-invariant metric if and only if each factor $\mg_1$, $\mg_2$ admits such a metric. The next result shows that the \rigid condition for $\mg$ also reduces to the same condition on each factor.
\begin{proposition}
\label{prop:decomposablesolitary}
Let $\g=\g_1\oplus\g_2$ be a reducible Lie algebra. Then $\g$ is \rigid if and only if $\g_1$ and $\g_2$ are \rigid.
\end{proposition}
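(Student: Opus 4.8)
The plan is to fix an orthogonal ad-invariant metric and reduce everything to a block computation. By Proposition~\ref{prop:rigid_independent} the solitary condition is independent of the chosen ad-invariant metric, and by Proposition~\ref{pro:decomposable} either side of the equivalence guarantees that each factor admits an ad-invariant metric; so I may fix ad-invariant metrics $g_1$ on $\g_1$ and $g_2$ on $\g_2$ and work throughout with $g=g_1+g_2$ on $\g$. Writing an endomorphism of $\g$ in block form $\phi=(\phi_{ij})$ with $\phi_{ij}\colon\g_j\to\g_i$, I will first establish a structural description of those $\phi$ that are ad-invariant and $g$-self-adjoint. Since $[\g_1,\g_2]=0$, $\mz(\g)=\mz(\g_1)\oplus\mz(\g_2)$ and $\g'=\g_1'\oplus\g_2'$, testing~\eqref{eqn:adinvariantphi} on pairs $X,Y$ chosen from the two factors should yield: the diagonal blocks $\phi_{11},\phi_{22}$ are ad-invariant and self-adjoint on $(\g_1,g_1)$ and $(\g_2,g_2)$; while the off-diagonal blocks satisfy $\ker\phi_{21}\supset\g_1'$, $\im\phi_{21}\subset\mz(\g_2)$ (and symmetrically for $\phi_{12}$), with $\phi_{12}=\phi_{21}^{*}$ forced by self-adjointness. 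This is where the content of the argument lies, and I expect it to be the main obstacle: the four instances of~\eqref{eqn:adinvariantphi} must be split into their $\g_1$- and $\g_2$-components to extract exactly these constraints.

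Granting this description, the off-diagonal map $E:=\phi_{21}\circ\pr_1$ satisfies the hypotheses of Remark~\ref{rem:phiWZ}, since $\im E=\im\phi_{21}\subset\mz(\g_2)\subset\mz(\g)$ and $\ker E\supset\g'$; hence $E$ is a derivation of $\g$. A direct adjoint computation, using $\phi_{12}=\phi_{21}^{*}$ and the orthogonality of $g_1,g_2$, then gives $E^{*_g}=\phi_{12}\circ\pr_2$, so that $E+E^{*_g}$ is precisely the off-diagonal part of $\phi$.

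For the ``if'' direction, assume $\g_1,\g_2$ are solitary and let $\phi$ be an ad-invariant self-adjoint endomorphism of $(\g,g)$. The structural description writes $\phi$ as the sum of $\phi_{11}$ and $\phi_{22}$ (extended by zero) together with its off-diagonal part. Solitarity of $\g_i$ furnishes derivations $D_i$ of $\g_i$ with $\phi_{ii}=D_i+D_i^{*_{g_i}}$; extending $D_i$ by zero on the complementary factor produces a derivation $\tilde D_i$ of $\g$ whose $g$-adjoint is the zero-extension of $D_i^{*_{g_i}}$. Setting $D:=\tilde D_1+\tilde D_2+E$, which is a derivation of $\g$ as a sum of derivations, one obtains $D+D^{*_g}=\phi$, so $g$ is solitary.

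For the ``only if'' direction, assume $\g$ is solitary and let $\phi_1$ be ad-invariant and self-adjoint on $(\g_1,g_1)$. Extending $\phi_1$ by zero yields an ad-invariant, $g$-self-adjoint endomorphism of $\g$ (ad-invariance survives because cross-brackets vanish), which by hypothesis equals $D+D^{*_g}$ for some derivation $D$ of $\g$. The diagonal block $D_{11}=\pr_1\circ D\circ\iota_1$ is then a derivation of $\g_1$, again because $[\g_1,\g_2]=0$ makes the off-diagonal contributions drop out; and since $(D^{*_g})_{11}=(D_{11})^{*_{g_1}}$, restricting $\phi=D+D^{*_g}$ to the $(1,1)$-block gives $\phi_1=D_{11}+D_{11}^{*_{g_1}}$. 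Hence $\g_1$ is solitary, and $\g_2$ is solitary by the symmetric argument, completing the proof.
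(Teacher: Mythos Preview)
Your proof is correct and follows essentially the same approach as the paper: fix the orthogonal sum metric, analyze an endomorphism in block form, observe that the off-diagonal blocks of an ad-invariant map satisfy~\eqref{eqn:phiWZ} (hence are simultaneously ad-invariant and derivations), and reduce the solitary condition to the diagonal blocks. The paper's version is more compressed---it notes that the same block characterization holds for derivations as for ad-invariant maps and concludes directly that $\phi=D+D^*$ iff each diagonal block is of this form---whereas you spell out both directions and the off-diagonal handling explicitly; but the content is the same.
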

\begin{proof}Take a linear map $f\colon\g\to\g$, and decompose it as
\[f=\begin{pmatrix} f_1 & f_3 \\ f_2 & f_4\end{pmatrix},\]
with respect to the decomposition of $\mg$.
For $X,Y\in\g_1$, we have
\[f[X,Y]-[fX,Y]=(f_1[X,Y]-[f_1X,Y]) + f_2[X,Y]\]
and for $X_1\in\g_1$, $X_2\in\g_2$ we have
\[f[X_1,X_2]-[fX_1,X_2]=-[f_2X_1,X_2].\]
This shows that $f$ is ad-invariant if and only if $f_1\colon\g_1\to\g_1$ is ad-invariant, $f_4\colon\g_2\to\g_2$ is ad-invariant, and the off-diagonal component
$\begin{pmatrix} 0 & f_3 \\ f_2 & 0\end{pmatrix}$
satisfies~\eqref{eqn:phiWZ}.

A similar fact holds for derivations.

Now fix on $\g$ an ad-invariant metric which is the sum of the two ad-invariant metrics on $\g_1$ and $\g_2$. It follows that a self-adjoint ad-invariant $\phi\colon\g\to\g$ is the self-adjoint part of a derivation if and only if $\phi_1$ and $\phi_4$ are the self-adjoint part of a derivation, so $\g$ is \rigid if and only if the components are \rigid.
\end{proof}

\section{The real, the complex, the solitary}\label{sec:complexification}
Let $\g$ be a real Lie algebra with an ad-invariant metric $g$. Then its complexification $\g^{\C}$ admits a natural ad-invariant complex scalar product $\tilde g$, obtained by extending scalars to $\C$. We will denote the adjoint relative to $g$ by $*$ and the adjoint relative to $\tilde g$ by $\tilde *$.

It turns out that $\g$ is solitary if and only if so is $\g^\C$. This follows from the following elementary fact:
\begin{lemma}
\label{lemma:elementary}
Let $\g$ be a Lie algebra and $\g^\C$ be its complexification. Then any $\C$-linear map $f\colon\g^\C\to\g^\C$ can be written in a unique way as $A^\C+iB^\C$, where $A,B\colon\g\to\g$ are $\R$-linear. Moreover:
\begin{enumerate}[label=(\roman*)]
\item $f$ is ad-invariant if and only if $A$ and $B$ are ad-invariant;
\item $f$ is a derivation if and only if $A$ and $B$ are derivations;
\item $f^{\tilde *}=(A^*)^\C+i(B^*)^\C$.
 \end{enumerate}
\end{lemma}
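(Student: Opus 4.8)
The plan is to exploit the real structure $\sigma$ on $\g^\C$ (complex conjugation, with $[\g^\C]=\g$) and the induced operation $f\mapsto\bar f:=\sigma f\sigma$ on the complex vector space $\End_\C(\g^\C)$; this $\bar f$ is again $\C$-linear precisely because $\sigma$ is $\C$-antilinear and the two occurrences of $\sigma$ cancel the scalar conjugations. A $\C$-linear map $f$ satisfies $\bar f=f$ exactly when it preserves $\g$, i.e. when $f=A^\C$ for an $\R$-linear $A\colon\g\to\g$. Writing $\frac12(f+\bar f)$, which is fixed by conjugation and hence restricts to an $\R$-linear $A$, and $\frac12(f-\bar f)$, which is negated by conjugation and hence equals $iB^\C$ for an $\R$-linear $B$, yields the existence of the decomposition $f=A^\C+iB^\C$. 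For uniqueness I would use $\g^\C=\g\oplus i\g$: if $(A-C)^\C=i(D-B)^\C$, then the left-hand side maps $\g$ into $\g$ while the right-hand side maps $\g$ into $i\g$, so both vanish on $\g$ and, being $\C$-linear, on all of $\g^\C$.

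For items (i) and (ii) the key observation is that, for an $\R$-linear $A\colon\g\to\g$, the extension $A^\C$ is ad-invariant (respectively a derivation) on $\g^\C$ if and only if $A$ has the same property on $\g$: both defining identities are $\C$-bilinear in their arguments, so they hold on $\g^\C$ if and only if they hold on the real form $\g$, a real basis of which is a $\C$-basis of $\g^\C$. Since ad-invariant maps and derivations each form a $\C$-linear subspace of $\End_\C(\g^\C)$, the implication ``$A,B$ ad-invariant (resp.\ derivations) $\Rightarrow f=A^\C+iB^\C$ has the same property'' is immediate. For the converse I would evaluate the relevant identity for $f$ on real arguments $u,v\in\g$; there every term lies in $\g$, so splitting into the real part (in $\g$) and the imaginary part (in $i\g$) via $\g^\C=\g\oplus i\g$ separates the condition for $f$ into the corresponding conditions for $A$ and for $B$.

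Item (iii) is where the one genuinely instructive point lies. First, $(A^\C)^{\tilde *}=(A^*)^\C$: the identity $\tilde g(A^\C u,v)=\tilde g(u,(A^*)^\C v)$ holds for $u,v\in\g$ by definition of $A^*$, and both sides are $\C$-bilinear, so it extends to all of $\g^\C$. The crucial feature is that $\tilde g$ is $\C$-bilinear, not Hermitian; consequently $(\lambda f)^{\tilde *}=\lambda f^{\tilde *}$ for $\lambda\in\C$, so the adjoint operation is itself $\C$-linear and no complex conjugation of $i$ occurs. Hence $f^{\tilde *}=(A^\C)^{\tilde *}+i(B^\C)^{\tilde *}=(A^*)^\C+i(B^*)^\C$. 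The main thing to be careful about, and the only place where one could slip, is precisely this absence of conjugation: with a Hermitian extension one would instead obtain $(A^*)^\C-i(B^*)^\C$, so the bilinearity of $\tilde g$ must be invoked explicitly.
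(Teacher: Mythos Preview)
Your proof is correct. The paper's own proof takes a slightly different, more pedestrian route: it fixes a real basis $e_1,\dotsc,e_n$ of $\g$, identifies a $\C$-linear $f$ with its complex matrix $(f_{ij})$, and lets $A$ and $B$ be the maps with matrices $(\Re f_{ij})$ and $(\Im f_{ij})$. Items (i)--(ii) then follow because the structure constants are real, so the linear equations cutting out ad-invariant maps or derivations have real coefficients; item (iii) follows because $g$ and $\tilde g$ are represented by the same real matrix.

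Your argument is basis-free: you use the real structure $\sigma$ and the involution $f\mapsto\sigma f\sigma$ to produce the decomposition, and then separate real and imaginary parts via $\g^\C=\g\oplus i\g$. The two approaches are equivalent in spirit (both exploit that $\g$ is a real form), but yours isolates more cleanly the one substantive point in (iii), namely that $\tilde g$ is $\C$-bilinear rather than Hermitian, so the adjoint is $\C$-linear and no conjugation of the factor $i$ appears. The paper's matrix argument absorbs this into the observation that the Gram matrix is real, which is correct but leaves the role of bilinearity implicit.
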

\begin{proof}
Let $e_1,\dotsc, e_n$ be a basis of $\g$; then $e_1,\dotsc, e_n$ can be seen as a basis of $\g^\C$ over $\C$.

A linear map $f\colon\g^\C\to \g^\C$ can be identified with a complex matrix $(f_{ij})$; the real linear maps $A$ and $B$ correspond to the real matrices $(\Re f_{ij})$, $(\Im f_{ij})$, where $\Re $ and $\Im $ denote real and imaginary part, respectively.

Since the structure constants in the basis $e_1,\dotsc, e_n$ are real, it is clear that the equations in $(f_{ij})$ characterizing ad-invariant maps have real coefficients, so $f$ is ad-invariant if and only if so are $A$ and $B$. The case of derivations is similar.

The last item follows from the fact that the scalar products $g$ and $\tilde g$ are represented by the same real matrix in the basis $e_1,\dotsc, e_n$.
\end{proof}

\begin{proposition}
\label{prop:solitaryiffcomplexsolitary}
Let $\g$ be a real Lie algebra with an ad-invariant metric $g$. Then $\g$ is \rigid if and only if the complexification $g^\C$ is \rigid on $\g^\C$.
\end{proposition}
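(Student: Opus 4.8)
The plan is to reduce the statement to a dimension count, using Lemma~\ref{lemma:elementary} as the essential bridge between the real and complex settings. Recall that a metric is \rigid precisely when $S_g = D_g$, and that by Proposition~\ref{prop:rigid_independent} this property is independent of the chosen ad-invariant metric; so it suffices to work with the fixed metric $g$ on $\g$ and its scalar extension $\tilde g = g^\C$ on $\g^\C$. The goal is therefore to show that $\dim_\R S_g = \dim_\R D_g$ holds on $\g$ if and only if $\dim_\C S_{\tilde g} = \dim_\C D_{\tilde g}$ holds on $\g^\C$.

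First I would set up the two vector spaces
\[
S_{\tilde g}=\{f\colon\g^\C\to\g^\C \st f \text{ ad-invariant}, f=f^{\tilde *}\}, \qquad
D_{\tilde g}=\{D+D^{\tilde *}\st D\in\Der(\g^\C)\},
\]
and analyze them through the decomposition $f=A^\C+iB^\C$ of Lemma~\ref{lemma:elementary}. The key observation is that the self-adjointness condition interacts cleanly with this decomposition: using part (iii) of the lemma, $f=f^{\tilde *}$ amounts to $(A^*)^\C+i(B^*)^\C = A^\C + i B^\C$, which by the uniqueness of the decomposition forces $A^*=A$ and $B^*=B$. Combined with part (i), this shows that $f\in S_{\tilde g}$ if and only if $A,B\in S_g$; hence the $\R$-linear map $A^\C+iB^\C\mapsto(A,B)$ identifies $S_{\tilde g}$, viewed as a \emph{real} vector space, with $S_g\oplus S_g$, so that $\dim_\C S_{\tilde g}=\dim_\R S_g$. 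I would then run the same argument for $D_{\tilde g}$: writing a derivation of $\g^\C$ as $D_1^\C + i D_2^\C$ with $D_1,D_2\in\Der(\g)$ (by part (ii)), and using part (iii) to compute its self-adjoint part as $(D_1+D_1^*)^\C + i(D_2+D_2^*)^\C$, the same identification yields $\dim_\C D_{\tilde g}=\dim_\R D_g$.

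Putting these two dimension counts together gives the result: $S_g = D_g$ (equivalently $\dim_\R S_g = \dim_\R D_g$, since $D_g\subseteq S_g$ always by Lemma~\ref{lemma:symmetrized_derivation_ad_invariant}) holds if and only if $\dim_\C S_{\tilde g}=\dim_\C D_{\tilde g}$, i.e. $S_{\tilde g}=D_{\tilde g}$, which is exactly the \rigid condition for $g^\C$. The main point requiring care — though it is not deep — is verifying that the correspondence $f\leftrightarrow(A,B)$ genuinely respects both the self-adjointness and the derivation structure simultaneously, so that the two dimension equalities are computed over a compatible decomposition; this is precisely what the three parts of Lemma~\ref{lemma:elementary} deliver in one package. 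I expect no serious obstacle here, as the argument is essentially a bookkeeping of real versus complex dimensions once the lemma is in hand; the only subtlety is keeping track of the fact that $S_{\tilde g}$ is a complex vector space whose complex dimension equals the real dimension of $S_g$, rather than twice it.
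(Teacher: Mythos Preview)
Your argument is correct and relies on the same key ingredient as the paper, namely the decomposition $f=A^\C+iB^\C$ of Lemma~\ref{lemma:elementary} together with its compatibility with ad-invariance, derivations, and adjoints. The only difference is stylistic: the paper carries out the two implications by explicit element-wise construction (given $\psi\in S_{\tilde g}$, it produces a derivation $D^\C+iE^\C$ with the right symmetrized part, and conversely), whereas you package the same computation as the pair of dimension identities $\dim_\C S_{\tilde g}=\dim_\R S_g$ and $\dim_\C D_{\tilde g}=\dim_\R D_g$ and conclude via $D_g\subseteq S_g$.
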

\begin{proof}
Let $g$ be a \rigid ad-invariant metric. Let $\psi\colon\g^\C\to\g^\C$ be ad-invariant and self-adjoint relative to $\tilde g$, and write $\psi=A^\C+iB^\C$ as in Lemma~\ref{lemma:elementary}; then $A,B\colon\g\to\g$ are ad-invariant and self-adjoint. Since $g$ is \rigid, we can write $A=D+D^*$, $B=E+E^*$, where $E$ and $D$ are derivations of $\g$. Again by Lemma~\ref{lemma:elementary}, we have
\[(D^\C+iE^\C)^{\tilde *}=(D^*)^\C+i(E^*)^\C,\]
so
\[\psi=(D^\C+iE^\C) + (D^\C+iE^\C)^{\tilde *}\]
and $\tilde g$ is \rigid.

Conversely, suppose that $g^\C$ is \rigid, and let $\phi\colon\g\to\g$ be ad-invariant and self-adjoint. We have
$\phi^\C=D+D^{\tilde *}$, with $D$ a derivation of $\g^\C$. By Lemma~\ref{lemma:elementary}, we can write $D=A^\C+iB^\C$, with $A,B$ derivations of $\g$, and
\[\phi^\C=A^\C+iB^\C + (A^\C+iB^\C)^{\tilde *}=\tilde A^\C + i\tilde B^\C, \quad \text{with } \tilde A= A+A^*,\; \tilde B=B+B^*.\]
By the uniqueness of the decomposition $\phi^\C=\tilde A^\C + i\tilde B^\C$, this implies that $A+A^*=\phi$ and $B+B^*=0$, and therefore $g$ is \rigid.
\end{proof}

Similarly, given a complex Lie algebra $\g$ with an ad-invariant metric $g$, the underlying real Lie algebra $\g^\R$ has an ad-invariant metric
\[g^\R\colon \g^\R\times \g^\R\to\R, \quad g^\R(X,Y)=\Re g(X,Y);\]
the complex structure $J\colon\g^\R\to \g^\R$ determined by multiplication by $i$ is self-adjoint and ad-invariant. Notice that $g^\R$ is nondegenerate because $g(X,Y)\neq 0$ implies that one of $\Re g(X,Y)$, $\Im g(X,Y)=\Re g(X,-JY)$ is nonzero.

Moreover, we can consider the conjugate Lie algebra $\overline{\g}$, which corresponds to the same underlying real Lie algebra $\g^\R$ with the sign of the complex structure $J$ reversed. We have an ad-invariant metric
\[\overline{g}\colon \overline{\g}\times \overline{\g}\to\C, \quad \overline{g}(X,Y)=\overline{g(X,Y)}.\]
\begin{lemma}
\label{lemma:trivial}
Let $\g$ be a complex Lie algebra with an ad-invariant metric $g$. Then the conjugate Lie algebra $\overline{\g}$ with the ad-invariant metric $\overline{g}$ is \rigid if and only if $\g$ is \rigid.
\end{lemma}
\begin{proof}
A linear map $\g^\R\to\g^\R$ defines a $\C$-linear map $\g\to\g$ if it commutes with $J$; this is equivalent to commuting with $-J$, so linear maps $\g\to\g$ can be identified with linear maps $\overline{\g}\to\overline{\g}$, and being ad-invariant or a derivation for $\g$ is equivalent to being ad-invariant or a derivation for $\overline{\g}$.

Likewise, the $g$-adjoint of an operator $f\colon\g\to\g$ can be identified with the $\overline{g}$-adjoint of the corresponding operator $f\colon\overline{\g}\to\overline{\g}$. The statement follows.
\end{proof}

\begin{proposition}
\label{prop:solitaryiffrealsolitary}
Let $\g$ be a complex Lie algebra with an ad-invariant metric $g$, and let $\g^\R$ be the underlying real Lie algebra, with the ad-invariant metric $g^\R=\Re g$. Then $\g$ is solitary if and only if $\g^\R$ is solitary.
\end{proposition}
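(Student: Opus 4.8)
The plan is to deduce the statement from the earlier results on complexification, direct sums, and conjugation, by exploiting the standard splitting of the complexification of $\g^\R$. Recall that the complex structure $J$ (multiplication by $i$) is self-adjoint and ad-invariant for $g^\R$; hence its $\C$-linear extension $J^\C$ is ad-invariant on $(\g^\R)^\C$ and satisfies $(J^\C)^2=-\id$. By Proposition~\ref{prop:eigenvaluesofphi} its $\pm i$-eigenspaces are ideals, and we obtain
\[(\g^\R)^\C=V_+\oplus V_-,\qquad V_\pm=\ker\bigl(J^\C\mp i\,\id\bigr).\]
First I would check that the projection $X\mapsto\tfrac12(X-iJ^\C X)$ restricts to a $\C$-linear Lie algebra isomorphism $\g\cong V_+$, while $X\mapsto\tfrac12(X+iJ^\C X)$ gives an isomorphism $\overline{\g}\cong V_-$; the reversal of sign in front of $J^\C$ is exactly what turns the complex structure into its conjugate, identifying the second factor with the \emph{conjugate} Lie algebra $\overline{\g}$ rather than with $\g$.

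Next I would record how the metric behaves. The form $(g^\R)^\C$ is ad-invariant on $(\g^\R)^\C$ (it is the complexification of the ad-invariant $g^\R$), and since $J^\C$ is self-adjoint with distinct eigenvalues $\pm i$, the ideals $V_+$ and $V_-$ are orthogonal; hence $(g^\R)^\C$ is the orthogonal sum of its nondegenerate restrictions to $V_\pm$. Using $g=g^\R+i\,\Im g$ together with $\Im g(X,Y)=-g^\R(JX,Y)$, a one-line computation shows that under the isomorphisms above these restrictions correspond to $\tfrac12 g$ on $V_+$ and $\tfrac12\overline{g}$ on $V_-$. In fact, since by Proposition~\ref{prop:rigid_independent} the \rigid condition is independent of the chosen ad-invariant metric, I only really need that each factor admits an ad-invariant metric; the explicit identification of the metrics is merely a convenient confirmation that the two factors are the metric Lie algebras $(\g,g)$ and $(\overline{\g},\overline{g})$.

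With this in place, the conclusion follows by chaining the known results: $\g^\R$ is \rigid if and only if $(\g^\R)^\C$ is \rigid (Proposition~\ref{prop:solitaryiffcomplexsolitary} applied to $(\g^\R,g^\R)$), if and only if both $V_+$ and $V_-$ are \rigid (Proposition~\ref{prop:decomposablesolitary} applied to the ideal decomposition $V_+\oplus V_-$, valid since both factors are nontrivial whenever $\g\neq0$, the trivial case being immediate), if and only if $\g$ and $\overline{\g}$ are \rigid, if and only if $\g$ is \rigid (Lemma~\ref{lemma:trivial}).

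The main obstacle is bookkeeping rather than substance: correctly identifying the $-i$-eigenspace $V_-$ with the conjugate algebra $\overline{\g}$ (and its induced metric with $\overline{g}$), so that Lemma~\ref{lemma:trivial} can be invoked instead of erroneously matching $V_-$ with $\g$ itself. Verifying $\C$-linearity of the two projections, together with the orthogonality and nondegeneracy of the restricted metrics, are the only computations involved, and all of them are routine once the relations between $g$, $g^\R$, $\Im g$ and $J$ are written down.
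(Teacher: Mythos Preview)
Your proposal is correct and follows essentially the same route as the paper: complexify $\g^\R$, split along the $\pm i$-eigenspaces of $J^\C$ into ideals isomorphic to $\g$ and $\overline{\g}$, and then chain Proposition~\ref{prop:solitaryiffcomplexsolitary}, Proposition~\ref{prop:decomposablesolitary}, and Lemma~\ref{lemma:trivial}. You supply a bit more detail (the explicit projections and the metric bookkeeping), but the structure of the argument is identical.
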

\begin{proof}
Let $\mh=(\g^\R)^\C$ be the complexification of $\g^\R$. The complex structure $J\colon\g^\R\to\g^\R$, extended $\C$-linearly to $\mh$, determines a decomposition into $\pm i$ eigenspaces
\[\mh=\mh^{1,0}\oplus\mh^{0,1},\]
where $\mh^{1,0}$ and $\mh^{0,1}$ are commuting ideals. Then $\mh$ carries an ad-invariant metric obtained by complexifying $g^\R$; $\mh^{1,0}$ and $\mh^{0,1}$ are orthogonal.

As complex Lie algebras, $\mh^{1,0}$ is isomorphic to $\g$ and $\mh^{0,1}$ is isomorphic to $\overline{\g}$. By Lemma~\ref{lemma:trivial} and Proposition~\ref{prop:decomposablesolitary}, $\mg$ is \rigid if and only if $\mh$ is \rigid. On the other hand, by Proposition~\ref{prop:solitaryiffcomplexsolitary} $\mh$ is \rigid if and only if so is $\g^\R$, and we obtain the asserted equivalence.
\end{proof}

\begin{remark}
As observed before, given a real Lie algebra $\g$ endowed with an ad-invariant metric $g$, then there exists an ad-invariant complex structure $J^2$ if and only if $\tilde\mg=(\g,J)$ is a complex Lie algebra and $g$ is the real part of a $\C$-bilinear ad-invariant metric $h$ on $\tilde\mg$.

Moreover, using the same argument as in the proof of Proposition~\ref{prop:solitaryiffrealsolitary}, we can show that $\id=\tilde D+\tilde D^{*_{h}}$ for some $\tilde D\in \Der(\tilde\mg)$ if and only if $J=D+D^{*_g}$ for some $D\in \Der(\mg)$.
\end{remark}

Whilst for irreducible Lie algebras the \rigid condition is equivalent to uniqueness of the ad-invariant metric up to sign, we cannot conclude that a real Lie algebra $\g$ has a unique ad-invariant metric (up to sign) if and only if $\g^\C$ has a unique ad-invariant metric.

\begin{example}
\label{ex:FF}
The real Lie algebra
\[\lie{f}:\quad (0,0,12,13,23)\]
has an ad-invariant metric $g_\lie{f}=e^1\odot e^5+e^2\odot e^4+e^3\otimes e^3$, which can easily be checked to be solitary. Notice that $g_\lie{f}$ and  $-g_\lie{f}$ have different signatures, so it is not possible to obtain one from the other by an automorphism.

If we take the direct sum of two copies of $\lie{f}$, call them $\lie{f}_1$ and $\lie{f}_2$, we have  that the Lie algebra $\mg=\lie{f}_1\oplus \lie{f}_2$ has three ad-invariant metrics of different signatures, namely
\[g_{\lie{f}_1}+g_{\lie{f}_2},\ g_{\lie{f}_1}-g_{\lie{f}_2},\ -g_{\lie{f}_1}-g_{\lie{f}_2}.\]
Let $\Ann\g' \subset\g^*$ be the annihilator of the commutator $\g'$, we see by an explicit computation that the space of ad-invariant symmetric tensors is $S^2(\Ann\g')+\Span{g_{\lie{f}_1},g_{\lie{f}_2}}$, which contains exactly three orbits under the group of automorphisms.

However, the same computation over $\C$ shows that $\lie{f}_1^\C\oplus \lie{f}_2^\C$ has a unique metric up to automorphisms.
\end{example}

\begin{example}
The real $10$-dimensional Lie algebra underlying $\lie{f}^\C$ can be written as
\begin{gather*}
\g:\quad (0,0,0,0,-e^{13}+e^{24},e^{12}+e^{34},e^{51}+e^{46},e^{45}-e^{61},e^{62}+e^{53},e^{25}-e^{36});
\end{gather*}
it admits the neutral ad-invariant metric
\[g=e^1\odot e^{10}+e^2\odot e^7+e^3\odot e^8+e^4\odot e^9+e^5\odot e^6.\]
It is solitary and irreducible, thus the metric is unique up to a change of sign.

This example should be contrasted with Example~\ref{ex:FF}; notice that the complexification of $\g$ is isomorphic to $\lie{f}_1^\C\oplus \lie{f}_2^\C$.
\end{example}

\section{Gradings adapted to the metric}\label{sec:Canonical}
In this section we introduce a class of gradings on a metric Lie algebra which is ``adapted'' to the metric in some sense. The section is divided in three parts. In the first, we introduce the relevant class of gradings and show its use in proving that an ad-invariant metric is solitary. In the second, we introduce a canonical grading on any metric Lie algebra, without assuming that the metric is ad-invariant; the grading is determined by the eigenspaces of a canonical semisimple derivation. The construction is adapted from \cite{Nikolayevsky}; therefore, this derivation will be called the \emph{metric Nikolayevsky derivation}. In the last part, we establish a sufficient condition for an ad-invariant metric to be solitary.

\subsection{Gradings and the solitary condition}
Recall that a \emph{grading} of a Lie algebra $\g$ is a decomposition $\mg=\bigoplus_{i} \mb_i$ such that
\begin{equation}\label{eq:gradb}
[\mb_i,\mb_j]\subseteq \mb_{i+j};
\end{equation}
we will mostly consider gradings over $\Z$, but we will occasionally allow the indices $i$ to be complex numbers.

\begin{remark}\label{rem:nilpgrad}
If $\mg$ admits a grading over $\N_0$, then $\mb_0$ is a subalgebra of $\mg$, $\mn:= \bigoplus_{i\in \N} \mb_i$ is a nilpotent ideal and $\mg$ can be written as a semidirect product $\mg=\mb_0\ltimes \mn$. Even more, when the grading is over $\N$ (i.e. $\mb_0=0$), $\mg$ is nilpotent.
\end{remark}

A linear map $f\colon\g\to\g$ is said to be of degree $k$ with respect to the grading $\mg=\bigoplus_{i} \mb_i$ if the image of $\mb_i$ is contained in $\mb_{k+i}$ for all $i$. More generally, any linear map $f\colon\g\to\g$ can be decomposed as $f=\sum f_k$, where $f_k$ has degree $k$. The following elementary fact will be used repeatedly.
\begin{proposition}
\label{prop:thegradedpart}
Let $\g=\bigoplus \mb_i$ be a graded Lie algebra, and let $f\colon\g\to\g$ be a linear map. We get:
\begin{enumerate}[label=(\roman*)]
 \item if $f$ is ad-invariant, then $f_k$ is ad-invariant;
 \item if $f$ is a derivation, then $f_k$ is a derivation;
 \item given a metric on $\g$ satisfying
 \begin{equation}
 \label{eqn:co_gradation}
 \mb_i \perp \mb_j \, \text{ when } i+j\neq l
\end{equation}
for some fixed integer $l$, we have $(f_k)^*=(f^*)_k$. In particular, if $f$ is self-adjoint, so is each $f_k$.
\end{enumerate}
\end{proposition}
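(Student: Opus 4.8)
The plan is to verify all three statements by decomposing the relevant defining identities into homogeneous components with respect to the grading and invoking the uniqueness of such a decomposition. Since $\g=\bigoplus_i \mb_i$ is finite-dimensional, only finitely many $\mb_i$ are nonzero and every element decomposes uniquely into homogeneous parts; I would write $\pi_i\colon\g\to\mb_i$ for the associated projections, so that by definition $f_k=\sum_i \pi_{i+k}\circ f\circ\pi_i$, i.e. $f_k$ raises degree by $k$.

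For (i), I would fix homogeneous $X\in\mb_p$, $Y\in\mb_q$, so that $[X,Y]\in\mb_{p+q}$ by~\eqref{eq:gradb}. The $\mb_{p+q+k}$-component of $f([X,Y])$ is then exactly $f_k([X,Y])$, whereas $[X,f(Y)]=\sum_j[X,f_j(Y)]$ with $[X,f_j(Y)]\in\mb_{p+q+j}$, so its $\mb_{p+q+k}$-component is $[X,f_k(Y)]$. Equating components of the identity $f([X,Y])=[X,f(Y)]$ in each fixed degree gives $f_k([X,Y])=[X,f_k(Y)]$ for homogeneous $X,Y$; bilinearity of both sides in $(X,Y)$ extends this to all of $\g$, so $f_k$ is ad-invariant. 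Statement (ii) follows from identical bookkeeping applied to $f([X,Y])=[f(X),Y]+[X,f(Y)]$: the $\mb_{p+q+k}$-component of the right-hand side is $[f_k(X),Y]+[X,f_k(Y)]$, yielding that $f_k$ is a derivation.

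The crux is (iii), for which I would first isolate the auxiliary fact that under~\eqref{eqn:co_gradation} the adjoint of a degree-$k$ map is again of degree $k$. Nondegeneracy of $g$ together with~\eqref{eqn:co_gradation} makes each block pairing $\mb_m\times\mb_{l-m}\to\K$ nondegenerate: given $0\neq v\in\mb_m$ there is $w$ with $g(v,w)\neq 0$, and $g(v,w)=g(v,\pi_{l-m}w)$ since only $\mb_{l-m}$ pairs with $v$. Consequently a vector lies in $\mb_{j+k}$ precisely when it is $g$-orthogonal to every $\mb_n$ with $n\neq l-(j+k)$. Now let $h$ have degree $k$ and take $Y\in\mb_j$. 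For $W\in\mb_{l-m}$ I compute $g(h^*Y,W)=g(hW,Y)$ with $hW\in\mb_{l-m+k}$; by~\eqref{eqn:co_gradation} this vanishes unless $(l-m+k)+j=l$, that is unless $m=j+k$. Hence $h^*Y$ is orthogonal to all $\mb_{l-m}$ with $m\neq j+k$, which by the detection property forces $h^*Y\in\mb_{j+k}$; thus $h^*$ has degree $k$.

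Applying this to each $h=f_k$, the identity $f^*=\sum_k (f_k)^*$ exhibits $f^*$ as a sum of maps of respective degrees $k$, so uniqueness of the homogeneous decomposition yields $(f^*)_k=(f_k)^*$; specializing to $f=f^*$ gives $f_k=(f_k)^*$ for every $k$. The only genuinely delicate point is the degree-preservation of the adjoint, which is why I would extract the nondegeneracy of the block pairing $\mb_m\times\mb_{l-m}$ as a preliminary step; once that is in place, every remaining step is a routine comparison of graded components.
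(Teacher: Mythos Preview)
Your proof is correct. Parts (i) and (ii) coincide with the paper's argument (homogeneous decomposition of the defining identities and comparison of components).

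For (iii) there is a minor organizational difference. The paper does not isolate the block nondegeneracy of $\mb_m\times\mb_{l-m}$ or the fact that adjunction preserves degree; instead it verifies $(f_k)^*=(f^*)_k$ by a single chain of equalities: for $X\in\mb_i$, $Y\in\mb_j$ it computes
\[
g(f_kX,Y)=g(\pi_{k+i}fX,Y)=\delta_{j,\,l-k-i}\,g(fX,Y)=\delta_{j,\,l-k-i}\,g(X,f^*Y)=g(X,\pi_{j+k}f^*Y)=g(X,(f^*)_kY),
\]
using only the orthogonality hypothesis~\eqref{eqn:co_gradation} and the definition of the projections. Your route---proving first that the adjoint of a degree-$k$ map again has degree $k$, then invoking uniqueness of the graded decomposition of $f^*$---is slightly longer but extracts a reusable lemma; the paper's computation is shorter and avoids the auxiliary nondegeneracy step.
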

\begin{proof}
For each integer $k$, $f_k$ is defined by
\[f_k|_{\mb_i}=\pi_{k+i}\circ f|_{\mb_i}\]
with $\pi_{k+i}$ denoting the projection.

If $f$ is ad-invariant, then for every $X\in \mb_i$, $Y\in\mb_j$ we have
\[f_k([X,Y])=\pi_{k+i+j}f([X,Y])
 =\pi_{k+i+j}[f(X),Y] =[\pi_{k+i}fX,Y]=[f_k X,Y].
\]
Similarly, one sees that if $f$ is a derivation then $f_k$ is a derivation.

Given a metric $g$ satisfying~\eqref{eqn:co_gradation}, for $X\in \mb_i$, $Y\in\mb_j$ we have
\begin{align*}
g( f_kX,Y) &= g( \pi_{k+i}f X, Y) =
 g( f X, \pi_{l-k-i}Y) =\delta_{j,l-k-i}g( f X, Y)\\
 &= \delta_{j,l-k-i}g( X, f^* Y)
 =\delta_{j,l-k-i}g( X, \pi_{l-i} f^* Y) = g( X, \pi_{k}f^* Y) =g( X, (f^*)_k Y).\qedhere
\end{align*}
\end{proof}

\begin{lemma}
\label{lemma:gradationadaptedtophi}
Suppose that $\g$ is a Lie algebra over $\K=\R,\C$ with an ad-invariant metric and a grading
$\g=\bigoplus \mb_i$ satisfying~\eqref{eqn:co_gradation} for some fixed integer $l$. Suppose that $\phi$ is an ad-invariant self-adjoint operator of degree $k$. Then \[(l-k)\phi=D+D^*,\] where $D$ is the derivation $\sum_t t\phi|_{\mb_t}$.
\end{lemma}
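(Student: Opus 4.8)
The plan is to identify $D$ as the composition of $\phi$ with the degree derivation of the grading, to compute the adjoint $D^*$ explicitly using the orthogonality hypothesis~\eqref{eqn:co_gradation}, and then to add the two operators degree by degree.

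First I would introduce the degree derivation $\delta\colon\g\to\g$ defined by $\delta|_{\mb_t}=t\,\id$. It is immediate from~\eqref{eq:gradb} that $\delta$ is a derivation: for $X\in\mb_s$, $Y\in\mb_t$ one has $[X,Y]\in\mb_{s+t}$, whence $\delta[X,Y]=(s+t)[X,Y]=[\delta X,Y]+[X,\delta Y]$. Since $D|_{\mb_t}=t\,\phi|_{\mb_t}=\phi\circ\delta|_{\mb_t}$, we have $D=\phi\circ\delta$, and Lemma~\ref{lemma:derivationcomposedadinvariant} then shows that $D$ is a derivation, using that $\phi$ is ad-invariant. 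This justifies calling $D$ a derivation in the statement, and is where the ad-invariance of $\phi$ enters.

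The heart of the argument is computing $D^*$. Since every operator in sight respects the grading, it suffices to pair elements $X\in\mb_s$ and $Y\in\mb_t$. Using the definition of $D$ and the self-adjointness of $\phi$,
\[g(DX,Y)=s\,g(\phi X,Y)=s\,g(X,\phi Y).\]
Now $\phi Y\in\mb_{t+k}$, so by~\eqref{eqn:co_gradation} the scalar $g(X,\phi Y)$ vanishes unless $s+(t+k)=l$, that is, $s=l-k-t$; hence the factor $s$ may be replaced by $l-k-t$ without altering either side, since both vanish when the indices do not match. This yields
\[g(DX,Y)=(l-k-t)\,g(X,\phi Y)=g\bigl(X,(l-k-t)\phi Y\bigr),\]
and as $X$ is arbitrary we conclude $D^*|_{\mb_t}=(l-k-t)\,\phi|_{\mb_t}$.

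Finally, adding the two operators on each $\mb_t$ gives $(D+D^*)|_{\mb_t}=\bigl(t+(l-k-t)\bigr)\phi|_{\mb_t}=(l-k)\phi|_{\mb_t}$, which is the asserted identity $(l-k)\phi=D+D^*$. No genuine obstacle arises: the orthogonality hypothesis and the self-adjointness of $\phi$ make each step a direct verification. The only point requiring care is the index bookkeeping in identifying $D^*$, namely tracking that $\phi$ shifts degree by $k$ while $g$ pairs degrees summing to $l$, so that the substitution $s\mapsto l-k-t$ is legitimate precisely on the nonvanishing pairings.
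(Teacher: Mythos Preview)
Your proof is correct and follows essentially the same approach as the paper: both introduce the degree derivation (your $\delta$, the paper's $N$), recognize $D=\phi\circ\delta$, and invoke Lemma~\ref{lemma:derivationcomposedadinvariant} to see that $D$ is a derivation. The only cosmetic difference is that the paper packages your componentwise computation into two global identities, namely $N+N^*=l\,\id$ (from~\eqref{eqn:co_gradation}) and $\phi N-N\phi=-k\phi$ (from $\phi$ having degree $k$), yielding $D+D^*=\phi N+N^*\phi=(\phi N-N\phi)+l\phi=(l-k)\phi$ in one line; your degree-by-degree determination of $D^*$ unwinds exactly these identities.
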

\begin{proof}
Observe first that $N=\sum_t t\id_{\mb_t}$ is a derivation, so $D$ as in the statement is indeed $D=\phi N$ and  thus a derivation by Lemma~\ref{lemma:derivationcomposedadinvariant}. Since the grading satisfies~\eqref{eqn:co_gradation}, $N+N^*=l\id$ and thus
\[D+D^*=\phi N + N^*\phi = \phi N+(l\id-N)\phi = (\phi N-N\phi )+l\phi = (l-k)\phi.\qedhere\]
\end{proof}
\begin{lemma}
\label{lemma:uniquenesslemma}
Let $\g$ be a Lie algebra over $\K=\R,\C$ with an ad-invariant metric and a grading $\g=\bigoplus \mb_i$ satisfying~\eqref{eqn:co_gradation} for some fixed integer $l$. Suppose that  every ad-invariant, self-adjoint $\phi\colon\g\to\g$ of degree $l$ has the form $\phi=D+D^*$, where $D$ is a derivation of degree $l$. Then the metric
 is \rigid.
\end{lemma}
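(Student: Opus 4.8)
The plan is to reduce the \rigid condition to a statement about the homogeneous components of an arbitrary self-adjoint ad-invariant map, handling the degree-$l$ component by the hypothesis and every other degree by Lemma~\ref{lemma:gradationadaptedtophi}. First I would take an arbitrary ad-invariant self-adjoint $\phi\colon\g\to\g$ and write $\phi=\sum_k\phi_k$, where $\phi_k$ denotes the component of degree $k$.

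The key preliminary step is to check that each $\phi_k$ inherits both relevant properties. By Proposition~\ref{prop:thegradedpart}(i) each $\phi_k$ is again ad-invariant, and since $\phi$ is self-adjoint and the grading satisfies~\eqref{eqn:co_gradation}, part (iii) of the same proposition gives $(\phi_k)^{*}=(\phi^{*})_k=\phi_k$, so each $\phi_k$ is self-adjoint as well. This is what lets me work one degree at a time while keeping exactly the hypotheses needed for the two lemmas.

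Next I would split into cases according to the degree $k$. When $k\neq l$, the scalar $l-k$ is nonzero, so Lemma~\ref{lemma:gradationadaptedtophi} applies to $\phi_k$ and yields $(l-k)\phi_k=D+D^{*}$ for the derivation $D=\sum_t t\,\phi_k|_{\mb_t}$; setting $D_k=\tfrac{1}{l-k}D$, which is again a derivation, I obtain $\phi_k=D_k+D_k^{*}$. When $k=l$ the factor $l-k$ vanishes and Lemma~\ref{lemma:gradationadaptedtophi} gives no information, but this is precisely the degree for which the hypothesis supplies a derivation $D_l$ of degree $l$ with $\phi_l=D_l+D_l^{*}$.

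Finally, writing $D_k$ for the derivation produced in each case and setting $D=\sum_k D_k$ — a derivation, being a finite sum of derivations — I would conclude $\phi=\sum_k(D_k+D_k^{*})=D+D^{*}$, so that $g$ is \rigid by definition. I do not expect a genuine obstacle: the only delicate point is the bookkeeping at degree $k=l$, where the general lemma degenerates and the hypothesis is tailored precisely to fill that gap, while everything else is a routine reassembly of the graded pieces.
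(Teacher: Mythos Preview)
Your proof is correct and follows essentially the same approach as the paper: decompose $\phi$ into graded pieces via Proposition~\ref{prop:thegradedpart}, handle $k\neq l$ by Lemma~\ref{lemma:gradationadaptedtophi} (dividing by the nonzero factor $l-k$), handle $k=l$ by hypothesis, and sum. The paper's proof is more terse but identical in substance.
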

\begin{proof}
Suppose that $\phi$ is an ad-invariant self-adjoint operator; we must show that $\phi=D+D^*$ for some derivation $D$. By Proposition~\ref{prop:thegradedpart} we can decompose $\phi$ as the sum of graded ad-invariant self-adjoint operators $\phi_k$. We can therefore apply Lemma~\ref{lemma:gradationadaptedtophi} to each $\phi_k$ with $k\neq l$ and write each as $\phi_k=D_k+D_k^*$; in addition, we have $\phi_l=D_l+D_l^*$ by hypothesis. The statement is obtained by linearity.
\end{proof}

\subsection{The metric Nikolayevsky derivation}
Given a Lie algebra $\g$ over $\K=\R,\C$ with a metric $g$, not necessarily ad-invariant, we can define the group $\CO(\g,g)$ of conformal transformations, i.e. those linear maps $f\colon\mg\to\mg$ for which there is a constant $l$ such that
\[g(fX,fY)=lg(X,Y).\]
Its Lie algebra $\co(\g,g)$ (or simply $\co$) is the direct sum $\so(\g,g)\oplus\Span{\id}$.

Then $\CO(\g,g)\cap \Aut(\g)$ is an algebraic group; its Lie algebra is
\[\co(\g,g)\cap\Der(\mg) = \{f\in\Der( \g)\st \exists l\in \K\text{ s.t. }  g(fX,Y)+g(X,fY)=lg(X,Y)\}.\]

Given a grading $\g=\bigoplus \mb_t$, the derivation $N=\sum_t \id_{\mb_t}$ is in $\co$ if and only if~\eqref{eqn:co_gradation} holds.

\begin{lemma}
\label{lemma:semisimplepartofcoder}
Given a real metric Lie algebra $(\mg,g)$, every element $f$ of $\co(\g,g)\cap\Der(\mg)$ splits as
\[f=s_\R+s_{i\R}+n,\]
where all three summands are in $\co(\g,g)\cap\Der(\mg)$, $s_\R$ is semisimple with real eigenvalues, $s_{i\R}$ is semisimple with imaginary eigenvalues, and $n$ is nilpotent.
\end{lemma}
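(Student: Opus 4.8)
The plan is to obtain the decomposition from the (real) additive Jordan decomposition of $f$ in $\End(\mg)$, refined by separating the real and imaginary parts of the eigenvalues, and then to verify that each of the three resulting operators again lies in $\co(\g,g)\cap\Der(\mg)$. Concretely, write $f=f_s+f_n$ with $f_s$ semisimple, $f_n$ nilpotent, $[f_s,f_n]=0$, and $f_s,f_n$ real polynomials in $f$ without constant term. Passing to $\mg^\C$, the operator $f_s^\C$ is diagonalizable; letting $P_\lambda$ be the projection (a polynomial in $f_s^\C$) onto the $\lambda$-eigenspace $V_\lambda$, set
\[
s_\R^\C=\sum_\lambda (\Re\lambda)\,P_\lambda,\qquad
s_{i\R}^\C=\sum_\lambda (i\,\Im\lambda)\,P_\lambda,\qquad n=f_n.
\]
Since $f_s$ is real, complex conjugation $\sigma$ on $\mg^\C$ commutes with $f_s^\C$ and sends $P_\lambda$ to $P_{\overline\lambda}$; a short computation, using that $\Re$ and $\Im$ are real-valued, then shows that $s_\R^\C$ and $s_{i\R}^\C$ commute with $\sigma$, so they descend to genuine real operators $s_\R,s_{i\R}$ on $\mg$. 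By construction $f=s_\R+s_{i\R}+n$, the three summands are polynomials in $f$ and hence commute, $s_\R$ and $s_{i\R}$ are semisimple with real and purely imaginary eigenvalues respectively, and $n$ is nilpotent. It remains to establish membership in $\co(\g,g)\cap\Der(\mg)$.

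For the derivation property I would use the generalized-eigenspace criterion. Because $f$ is a derivation, the generalized eigenspaces $\mg^{(\lambda)}$ of $f^\C$ satisfy $[\mg^{(\lambda)},\mg^{(\mu)}]\subseteq\mg^{(\lambda+\mu)}$. Any operator acting on each $\mg^{(\lambda)}$ as multiplication by $c(\lambda)$, for a function $c$ that is additive on the occurring eigenvalues, is then a derivation, since for $x\in\mg^{(\lambda)}$ and $y\in\mg^{(\mu)}$ one has $c(\lambda+\mu)[x,y]=c(\lambda)[x,y]+c(\mu)[x,y]$. Applying this with $c(\lambda)=\lambda$, $c=\Re$ and $c=\Im$ (all additive) shows at once that $f_s$, $s_\R$ and $s_{i\R}$ are derivations, and hence so is $n=f-f_s$.

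The main obstacle is the conformal part, where I must relate the Jordan decomposition to the $g$-adjoint. The key observation is that $f\mapsto f^*$ is an anti-automorphism of the associative algebra $\End(\mg)$, so it preserves semisimplicity and nilpotency and therefore commutes with the Jordan decomposition: $(f^*)_s=(f_s)^*$ and $(f^*)_n=(f_n)^*$. Writing the hypothesis $f\in\co(\g,g)$ as $f+f^*=l\,\id$ and taking Jordan parts yields $f_s^*=l\,\id-f_s$ and $f_n^*=-f_n$; thus $f_n\in\so(\g,g)$ and $f_s\in\co(\g,g)$. To split $f_s$, I would examine the $\C$-bilinear extension of $g$: the relation $f_s^*=l\,\id-f_s$ forces $g(V_\lambda,V_\mu)=0$ unless $\lambda+\mu=l$, whence $P_\lambda^*=P_{l-\lambda}$. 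A direct computation then gives $s_\R^*=l\,\id-s_\R$ and $s_{i\R}^*=-s_{i\R}$, so $s_\R\in\co(\g,g)$ and $s_{i\R}\in\so(\g,g)\subset\co(\g,g)$. Combined with the previous paragraph, this places all three summands in $\co(\g,g)\cap\Der(\mg)$, completing the argument. Alternatively, these membership statements can be deduced from the fact, recorded above, that $\co(\g,g)\cap\Der(\mg)$ is the Lie algebra of the real algebraic group $\CO(\g,g)\cap\Aut(\g)$, which is closed under the real Jordan decomposition into hyperbolic, elliptic and nilpotent parts.
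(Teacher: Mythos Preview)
Your proof is correct and follows essentially the same route as the paper's: both arguments complexify, use that the generalized eigenspaces of a derivation $f$ satisfy $[\mg^{(\lambda)},\mg^{(\mu)}]\subseteq\mg^{(\lambda+\mu)}$ so that any additive assignment $\lambda\mapsto c(\lambda)$ yields a derivation, and both deduce the orthogonality $V_\lambda\perp V_\mu$ unless $\lambda+\mu=l$ to get the $\co$-membership. Your presentation is slightly more systematic in invoking the Jordan decomposition and the anti-automorphism property of the adjoint (the paper instead computes $g((f-\lambda\id)^nX,Y)=g(X,((l-\lambda)\id-f)^nY)$ directly), and your closing remark on the real Jordan decomposition in algebraic groups is a valid shortcut the paper does not mention.
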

\begin{proof}
The decomposition into generalized eigenspaces of $f$ gives a grading $\g^\C=\bigoplus \mb_\lambda$. Suppose real numbers $\{\nu_\lambda\}$ are given such that $\nu_\lambda+\nu_\mu=\nu_{\lambda+\mu}$ whenever $\lambda,\mu,\lambda+\mu$ are eigenvalues. Then $\sum \nu_\lambda \id_{\mb_\lambda}$ is a derivation. In particular, this shows that
\[
s_\R=\sum (\Re \lambda) \id_{\mb_\lambda},\quad
 s_{i\R}=\sum i(\Im \lambda) \id_{\mb_\lambda}
\]
are derivations of $\g^\C$. Since they preserve the real structure, they are also derivations of $\g$.

In addition, if $g(fX,Y)+g(X,fY)=lg(X,Y)$, we have
\[
g((f-\lambda \id)^n X,Y)=g((f-\lambda v)^{n-1}X,((l-\lambda) \id-f)Y )
=\dots = g(X,((l-\lambda)\id-f)^n Y).
\]
Thus if $X\in \mb_i$, then $X$ is orthogonal to $\mb_j$ unless $i+j=l$.

This implies that $s_\R$ and $s_{i\R}$ are in $\co(\g,g)$ and thus so is the difference $n=f-s_\R-s_{i\R}$, which is nilpotent by construction.
\end{proof}
The same argument in the complex setting gives:
\begin{lemma}
\label{lemma:semisimplepartofcoderoverC}
Given a complex metric Lie algebra $(\mg,g)$, every element $f$ of $\co(\g,g)\cap\Der(\mg)$ splits as
\[f=s+n,\]
with $s$ semisimple, $n$ nilpotent and both are elements in $\co(\g,g)\cap\Der(\mg)$.
\end{lemma}

The construction of the metric Nikolayevsky derivation uses some simple combinatorics; we will make this explicit by introducing the following terminology
(along the lines of \cite{ContiRossi:Construction}). A \emph{symmetric diagram} is a digraph $\Delta$ with arrows labeled by nodes,
endowed with an involution of the set of nodes $\sigma$ such that:
\begin{enumerate}
 \item \label{item:N3}whenever $i\xrightarrow{j}k$ is an arrow, then $j\xrightarrow{i}k$ is also an arrow;
 \item \label{item:AD}whenever $i\xrightarrow{j}k$ is an arrow then $i\xrightarrow{\sigma_k}\sigma_j$ is also an arrow.
\end{enumerate}

The following is obvious:
\begin{lemma}\label{lem:prenice}
Given a constant $l$ and a finite set $\Lambda=\{\lambda_1,\dotsc, \lambda_n\}\subset\C$  such that $\Lambda=\{l-\lambda\st \lambda\in\Lambda\}$,
define a digraph $\Delta$ with nodes $\{1,\dotsc, n\}$ such that $i\xrightarrow{j}k$ is an arrow whenever $\lambda_i+\lambda_j=\lambda_k$, and
let $\sigma$ be the permutation satisfying $\lambda_i+\lambda_{\sigma(i)}=l$.  Then $(\Delta,\sigma)$ is a symmetric diagram.
\end{lemma}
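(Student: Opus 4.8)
The plan is a direct verification of the three requirements in the definition of a symmetric diagram: that $\sigma$ is a well-defined involution, and that the two arrow-closure properties~\ref{item:N3} and~\ref{item:AD} hold. Since all three reduce to manipulating two linear identities among the $\lambda_i$, the argument is purely combinatorial and the statement is flagged as obvious.

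First I would establish that $\sigma$ is well defined and is an involution. Since $\Lambda=\{l-\lambda\st\lambda\in\Lambda\}$, for each index $i$ the element $l-\lambda_i$ lies in $\Lambda$, hence equals $\lambda_j$ for some $j$; as $\lambda_1,\dotsc,\lambda_n$ are the distinct elements of the set $\Lambda$, this $j$ is unique, so $\sigma(i):=j$ is a well-defined map satisfying $\lambda_i+\lambda_{\sigma(i)}=l$. The defining relation is symmetric in $i$ and $\sigma(i)$, so $\sigma(\sigma(i))=i$ and $\sigma$ is an involution.

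Next I would check the two closure conditions. Property~\ref{item:N3} is immediate from commutativity of addition: if $i\xrightarrow{j}k$ is an arrow then $\lambda_i+\lambda_j=\lambda_k$, whence $\lambda_j+\lambda_i=\lambda_k$ and $j\xrightarrow{i}k$ is an arrow. For property~\ref{item:AD}, suppose $i\xrightarrow{j}k$, i.e.\ $\lambda_i+\lambda_j=\lambda_k$. Using $\lambda_{\sigma(k)}=l-\lambda_k$ and $\lambda_{\sigma(j)}=l-\lambda_j$, I would compute
\[\lambda_i+\lambda_{\sigma(k)}=\lambda_i+l-\lambda_k=\lambda_i+l-(\lambda_i+\lambda_j)=l-\lambda_j=\lambda_{\sigma(j)},\]
so $i\xrightarrow{\sigma_k}\sigma_j$ is an arrow, as required. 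This exhausts the conditions, so $(\Delta,\sigma)$ is a symmetric diagram.

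The only point requiring any care — and the closest thing to an obstacle — is the well-definedness of $\sigma$, which hinges on the $\lambda_i$ being pairwise distinct so that each $l-\lambda_i$ determines a unique index; everything else is the bookkeeping of the two linear identities above.
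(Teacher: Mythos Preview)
Your proposal is correct and matches the paper's approach: the paper simply declares the lemma obvious and gives no proof, and your direct verification of the involution property and the two arrow-closure conditions~\ref{item:N3} and~\ref{item:AD} is exactly the computation one has in mind when calling it obvious. Your remark about well-definedness of $\sigma$ relying on the $\lambda_i$ being distinct elements of the set $\Lambda$ is the only subtlety, and you handle it correctly.
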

Given a symmetric diagram $\Delta$ with nodes $\{1,\dotsc, n\}$, we can define a matrix $M_\Delta$ whose rows are $-e_i-e_j+e_k$ whenever $i\xrightarrow{j}k$ is an arrow. This is akin to the root matrix studied in the context of nice Lie algebras, but notice that we do not assume the indices $i,j,k$ to be distinct.
\begin{lemma}
\label{lemma:sprenice}
Let $(\Delta,\sigma)$ be a connected symmetric diagram. Then there is a constant $l$ such that any $X\in\ker M_\Delta$ satisfies
\[x_i+x_{\sigma_i}=l.\]
\end{lemma}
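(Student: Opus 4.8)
The plan is to rewrite membership in $\ker M_\Delta$ as a system of linear relations indexed by the arrows of $\Delta$, then to show that the function $m\mapsto x_m+x_{\sigma_m}$ takes the same value on all nodes occurring in a common arrow, so that connectedness of the diagram forces it to be globally constant. First I would unpack the hypothesis: since each row of $M_\Delta$ has the form $-e_i-e_j+e_k$ for an arrow $i\xrightarrow{j}k$, the equation $M_\Delta X=0$ says precisely that
\[x_k=x_i+x_j\qquad\text{whenever } i\xrightarrow{j}k\text{ is an arrow}.\]
Property~\ref{item:N3} makes this system symmetric under interchange of $i$ and $j$, so it is consistent. Set $f(m)=x_m+x_{\sigma_m}$.

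The key step uses property~\ref{item:AD}. Fix an arrow $i\xrightarrow{j}k$; then $i\xrightarrow{\sigma_k}\sigma_j$ is also an arrow, so the two kernel relations read
\[x_i+x_j=x_k,\qquad x_i+x_{\sigma_k}=x_{\sigma_j}.\]
Subtracting eliminates $x_i$ and yields $x_j+x_{\sigma_j}=x_k+x_{\sigma_k}$, that is $f(j)=f(k)$. Applying property~\ref{item:N3} to pass to the arrow $j\xrightarrow{i}k$ and repeating the argument with the roles of $i$ and $j$ interchanged gives $f(i)=f(k)$. Hence $f(i)=f(j)=f(k)$ for every arrow $i\xrightarrow{j}k$.

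Finally I would invoke connectedness. Let $G$ be the graph on $\{1,\dots,n\}$ in which two nodes are adjacent whenever they occur together in a single arrow (as source, label, or target); connectedness of $(\Delta,\sigma)$ means that $G$ is connected. The previous step shows that $f$ is constant on the three nodes of each arrow, hence constant along every edge of $G$, and therefore constant on all of $\{1,\dots,n\}$. Denoting this common value by $l=x_i+x_{\sigma_i}$, independent of $i$, gives the claim.

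The argument is elementary; the only points requiring care are keeping track of the involution indices correctly when invoking property~\ref{item:AD} (including the degenerate cases where $i,j,k$ are not distinct, which the computation tolerates), and fixing the precise meaning of connectedness so that the local-to-global propagation of $f$ is legitimate. In particular one should note that property~\ref{item:AD} does not escape the connected component, since $i,\sigma_k,\sigma_j$ share the arrow $i\xrightarrow{\sigma_k}\sigma_j$; this guarantees that $l$ is genuinely well defined on the whole connected diagram.
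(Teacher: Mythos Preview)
Your proof is correct and follows essentially the same approach as the paper: both derive the key identity $x_j+x_{\sigma_j}=x_k+x_{\sigma_k}$ by combining the kernel relations for the arrows $i\xrightarrow{j}k$ and $i\xrightarrow{\sigma_k}\sigma_j$, then appeal to connectedness. Your version is slightly more explicit in that you also invoke property~\ref{item:N3} to deduce $f(i)=f(k)$ and spell out the graph along which $f$ propagates, whereas the paper leaves this step implicit.
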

\begin{proof}
If $i\xrightarrow{j}k$ is an arrow, then $i\xrightarrow{\sigma_k}\sigma_j$ is an arrow. Thus, $M_\Delta$ contains both the rows $-e_i-e_j+e_k$ and $-e_i-e_{\sigma_k}+e_{\sigma_j}$, so $X\in\ker M_\Delta$ satisfies   $\langle X, e_j-e_k-e_{\sigma_k}+e_{\sigma_j}\rangle=0$, i.e.
\[x_j+x_{\sigma_j}=x_k+x_{\sigma_k}.\]
By connectedness, this shows that $x_i+x_{\sigma_i}$ is independent of $i$.
\end{proof}

Recall from \cite{Nikolayevsky} that every Lie algebra $\g$ admits a semisimple derivation $\tilde N$ such that
\[\Tr(\tilde N\psi)=\Tr\psi, \quad \text{ for every } \psi\in\Der(\g);\]
the derivation $\tilde N$ is unique up to automorphisms, and it is called the \textit{Nikolayevsky} (or \textit{pre-Einstein}) \textit{derivation}. Given a metric $g$ on $\mg$, in general, $\tilde N$ may not be in $\co(\mg,g)$. However, we can adapt the argument to construct a canonical derivation in $\co(\mg,g)$:
\begin{theorem}
\label{thm:metricnik}
Given a metric Lie algebra $(\mg,g)$ over $\K=\R,\C$, there is a semisimple derivation $N\in\co(\g,g)\cap\Der(\mg)$ such that
\begin{equation} \label{eqn:definingNik}
\Tr(N\psi)=\Tr\psi,\quad \text{ for every }\psi\in\co(\g,g)\cap\Der(\mg).
\end{equation}
This derivation is unique up to automorphisms preserving the conformal class of the metric and its eigenvalues are rational numbers.
\end{theorem}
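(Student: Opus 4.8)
The plan is to adapt Nikolayevsky's original construction to the subalgebra $\mathfrak{h}:=\co(\g,g)\cap\Der(\g)$, which is an algebraic Lie algebra by the remarks preceding the theorem. The defining condition~\eqref{eqn:definingNik} says precisely that $N$ is the metric dual, via the trace form $\langle\psi_1,\psi_2\rangle:=\Tr(\psi_1\psi_2)$ on $\mathfrak{h}$, of the linear functional $\psi\mapsto\Tr\psi$. First I would check that the trace form restricted to $\mathfrak{h}$ is nondegenerate so that this dual element exists and is unique; the natural way is to decompose $\mathfrak{h}$ into its reductive part plus nilradical and use the Cartan-type argument, exactly as Nikolayevsky does, noting that $\Span{\id}\subset\mathfrak{h}$ pairs nondegenerately with itself. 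This produces a canonical element $N\in\mathfrak{h}$ satisfying~\eqref{eqn:definingNik}; uniqueness is immediate from nondegeneracy.

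The next step is to show $N$ is semisimple and may be taken with rational eigenvalues. Here I would follow Nikolayevsky closely: decompose $N=s+n$ into its semisimple and nilpotent parts (using Lemma~\ref{lemma:semisimplepartofcoderoverC} over $\C$, or Lemma~\ref{lemma:semisimplepartofcoder} over $\R$, so that both parts remain inside $\mathfrak{h}$). One tests~\eqref{eqn:definingNik} against $n$ itself: since $n$ is nilpotent and commutes with $s$, $\Tr(Nn)=\Tr(sn)+\Tr(n^2)=\Tr(n^2)$, while the defining property forces $\Tr(Nn)=\Tr(n)=0$; as $n$ is nilpotent over a field of characteristic zero one concludes $\Tr(n^2)=0$ forces $n=0$ after a standard Jordan-form argument, so $N=s$ is semisimple. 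For rationality of the eigenvalues, I would invoke the combinatorial machinery just set up: the eigenvalue decomposition of $N$ on $\g^\C$ gives a grading, and by Lemma~\ref{lemma:semisimplepartofcoderoverC} (applied to the complexification if $\K=\R$) the eigenvalues $\Lambda$ satisfy $\Lambda=\{l-\lambda\}$ for the conformal constant $l$, so by Lemma~\ref{lem:prenice} they define a symmetric diagram; the defining trace relation~\eqref{eqn:definingNik}, restricted to the diagonal derivations supported on the eigenspaces, becomes a linear system $M_\Delta X=\text{(integer vector)}$ with integer coefficients, whose solution $X=$ (eigenvalues of $N$) is therefore rational.

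Finally, uniqueness up to conformal automorphisms follows the usual template. If $N'$ is another such derivation, then $N-N'$ is $\langle\cdot,\cdot\rangle$-orthogonal to all of $\mathfrak{h}$ by~\eqref{eqn:definingNik}, and since both lie in $\mathfrak{h}$ one gets $\Tr((N-N')\psi)=0$ for all $\psi\in\mathfrak{h}$; one then shows $N$ and $N'$ are conjugate by an inner automorphism of the reductive group $\CO(\g,g)\cap\Aut(\g)$ lying in its unipotent radical, mimicking Nikolayevsky's argument that the pre-Einstein derivation is unique up to $\Aut(\g)$, now carried out inside the smaller algebraic group so that the conjugating automorphism automatically preserves the conformal class of $g$.

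The main obstacle I anticipate is not the trace-duality or the combinatorics, both of which are routine adaptations, but rather verifying that the relevant trace form is nondegenerate on $\mathfrak{h}$ and that the semisimple and nilpotent Jordan parts genuinely stay inside $\co(\g,g)\cap\Der(\g)$ rather than merely inside $\Der(\g)$. The cleanest way to secure the latter is precisely Lemmas~\ref{lemma:semisimplepartofcoder} and~\ref{lemma:semisimplepartofcoderoverC}, which were evidently placed here for this purpose; the care needed is to run the real case through the complexification and then descend, checking that the rational eigenvalue conclusion is insensitive to whether one works over $\R$ or $\C$.
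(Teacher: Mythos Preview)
Your proposal has a genuine gap in the semisimplicity step. You argue that if $N = s + n$ is the Jordan decomposition, then testing \eqref{eqn:definingNik} against $n$ gives $\Tr(n^2) = 0$, and claim that for nilpotent $n$ this forces $n = 0$. But $n$ nilpotent implies $n^2$ nilpotent, so $\Tr(n^2) = 0$ holds automatically and carries no information whatsoever; the ``standard Jordan-form argument'' you invoke does not exist here. (You may be thinking of the fact that $\Tr(n^*n) = 0$ forces $n = 0$ for a positive-definite adjoint, but no such adjoint is in play.)

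Relatedly, your claim that the trace form is nondegenerate on all of $\mathfrak{h} = \co(\g,g)\cap\Der(\g)$ is not true in general, and your proposal is internally inconsistent about this: you first say nondegeneracy gives a \emph{unique} $N$, but later argue that two solutions $N,N'$ differ by an element of the radical and are merely conjugate. In fact, whenever $\mathfrak{h}$ has nontrivial nilradical $\lie{n}$ and trivial Levi factor, every $n \in \lie{n}$ lies in the radical of the trace form (after upper-triangularising, $n\psi$ has zero diagonal for all $\psi \in \mathfrak{h}$), so the dual element is genuinely non-unique and an arbitrary representative need not be semisimple.

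The paper avoids both problems by never working on all of $\mathfrak{h}$. It takes the Levi decomposition $\mathfrak{h} = \lie{s} + \lie{a} + \lie{n}$, checks that both sides of \eqref{eqn:definingNik} vanish on $\lie{s} + \lie{n}$ when $N \in \lie{a}$, and then solves the equation inside $\lie{a}$ alone, where the trace form \emph{is} nondegenerate (real case) or a Hermitian variant is used (complex case, with rationality then closing the loop). Since $\lie{a}$ consists of commuting semisimple elements by construction, semisimplicity of $N$ is built in rather than deduced. Uniqueness up to conformal automorphisms comes from Mostow's conjugacy theorem for maximal fully reducible subalgebras, not from nondegeneracy. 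Your instinct about Lemmas~\ref{lemma:semisimplepartofcoder}--\ref{lemma:semisimplepartofcoderoverC} and the rationality combinatorics is sound, but the existence and semisimplicity core must be rebuilt along these lines.
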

\begin{proof}
The proof follows \cite[Section 3]{Nikolayevsky}. The Lie algebra $\co(\g,g)\cap\Der(\mg)$ is algebraic, so it admits a Levi decomposition $\lie{s} + \lie{r}$. The radical $\lie{r}$ is a solvable Lie algebra acting faithfully on $\g$; by Lie's theorem, we can assume that $\lie{r}$ consists of upper triangular matrices in $\gl(\g)$ (for $\K=\C$) or $\gl(\g^\C)$ (for $\K=\R)$. By Lemmas~\ref{lemma:semisimplepartofcoder},~\ref{lemma:semisimplepartofcoderoverC} we see that $\lie{r}$ splits as $\lie{a} + \lie{n}$, where $\lie{n}$ consists of the nilpotent elements of $\lie{r}$ and $\lie{a}$ is an abelian Lie algebra of semisimple elements.

For $\psi\in \lie{s}$, we have $\Tr\psi=0$, because $\lie{s}$ has no codimension one ideals. We also have $\Tr\psi=0$ for $\psi\in\lie{n}$. Notice that every element $N\in\lie{a}$ satisfies $\Tr (\psi N)=0$ for $\psi$ in $\lie{n}$ (because $\psi N$ is nilpotent) or  $\psi$ in $\lie{s}$ (because $\lie{s}+ \lie{a}$ is a representation of $\lie{s}$, and $(\phi, N)\mapsto \Tr(\phi N)$ is equivariant).

Therefore, we need to single out an $N\in \lie{a}$ such that $\Tr(\psi)=\Tr(N\psi)$ for all $\psi\in\lie{a}$. In the real case such an element always exists, because the scalar product
\[\lie{a}\times \lie{a} \to \K, \quad (\phi, N)\mapsto \Tr(\phi N)\]
is nondegenerate. In the complex case, we can fix a real structure on $\g$ as a vector space, thus obtaining a real structure on $\gl(\g)$ and a positive-definite hermitian product
\[\lie{a}\times \lie{a} \to \K, \quad (\psi, N)\mapsto \Tr(\psi\overline N).\]
We can summarize both the real and the complex case by saying that we have a unique $N\in\lie{a}$ such that $\Tr(\psi)=\Tr(\overline{N}\psi)$;  we will show that $N$ has rational eigenvalues, so that~\eqref{eqn:definingNik} is also satisfied.

Moreover $N$ is the unique element of $\lie{s} +\lie{a}$ that satisfies~\eqref{eqn:definingNik}, because the bilinear form $(N,\psi)\mapsto \Tr( N\psi)$ is nondegenerate on the semisimple Lie algebra $\lie{s}$ (indeed, the null space is a solvable ideal by Cartan's criterion, \cite[p.20]{Humphreys}, \cite[Section 3.4]{Samelson}). However, $\lie{s} + \lie{a}$ is maximal fully reducible in $\co(\mg,g)\cap \Der(\mg)$, so by \cite{Mostow} any two choices are conjugated by an element of $\CO(\mg,g)\cap\Aut(\mg)$.

It remains to prove that the eigenvalues of $N$ are rational. Let $\lambda_1,\dotsc, \lambda_p$ be its eigenvalues, with respective multiplicities $d_1,\dotsc, d_p$. Since $N$ is in $\co(\mg,g)$, we can assume $\lambda_1+\lambda_2=l=\dots = \lambda_{p-1}+\lambda_p$, $d_1=d_2,\dots, d_{p-1}=d_p$.

Consider the symmetric diagram $(\Delta,\sigma)$ associated to $\{\lambda_1,\dotsc,\lambda_p\}$ as in Lemma~\ref{lem:prenice}, and let $M_\Delta$ be the corresponding matrix. We define a matrix $N_\Delta$ by adding to $M_\Delta$ the rows
\[e_1+e_2-e_{3}-e_4, \dots ,e_1+e_2 - e_{p-1}-e_p.\]
The last step is unnecessary if $\g$ is irreducible. Indeed, the sum of the eigenspaces of $N$ that correspond to a connected component of $\Delta$ is an ideal; for $\g$ irreducible, $\Delta$ is connected, so
by Lemma~\ref{lemma:sprenice} $N_\Delta$ has the same kernel as $M_\Delta$; the following part of the proof only depends on the kernel of $N_\Delta$, so we can replace it with $M_\Delta$.

Then elements $\nu=(\nu_1,\dots, \nu_p)$  of $\ker N_\Delta$ define elements of $\co(\mg,g)\cap \Der(\mg)$ by
\[\sum \nu_t \id_{\mb_t},\]
where $\mb_t$ is the eigenspace of $N$ relative to the eigenvalue $t$. Indeed, they are in the kernel of $M_\Delta$, so they respect the grading defined by $N$, and they satisfy $\nu_1+\nu_2=k=\nu_3+\nu_4=\dots = \nu_{p-1}+\nu_p$ for some $k$, so they are in $\co(\mg,g)$.

Now fix a matrix $F$ by selecting generators of the space spanned by rows of $N_\Delta$. The fact that $\nu\in \ker F$ defines an element of $\co(\mg,g)\cap \Der(\mg)$ and the defining condition of $N$ imply that
\[\sum \nu_i d_i = \sum \nu_i d_i\overline\lambda_i,\]
i.e.
\[\sum \nu_id_i(\overline\lambda_i-1)=0, \quad \nu \in \ker F.\]

This implies that $(d_1(\overline\lambda_1-1),\dots, d_p(\overline\lambda_p-1))$ is in the span of the rows of $F$, i.e.
\[\tran F H = \begin{pmatrix}d_1(\overline\lambda_1-1)\\ \vdots \\ d_p(\overline\lambda_p-1)\end{pmatrix}\]
for some real vector $H$. Denoting by $[1]$ the vector in $\R^p$ with all entries equal to $1$, we get
\[\langle [1],\tran F H\rangle = \sum d_i(\overline\lambda_i-1)=\sum d_i(l/2-1)=n(l/2-1),\]
where $n=\dim\g$.

If we denote by $D$ the diagonal matrix with entries $d_1,\dotsc, d_p$,
we get
\[\tran F H=D(\overline\lambda-[1]), \quad \overline\lambda=\begin{pmatrix}\overline\lambda_1\\ \vdots \\ \overline\lambda_p\end{pmatrix},\] i.e.
\[\overline\lambda = D^{-1}\tran F H +[1].\]
Since $F$ has integer entries, $F\lambda=0$ implies $F\overline\lambda=0$; thus, applying $F$ to the above we get
\[0 =F D^{-1}\tran F H +F[1].\]
Now observe that $F D^{-1}\tran F$ is a real, positive definite symmetric matrix; indeed,
relative to the positive-definite real scalar product $\langle X,Y\rangle_{D^{-1}} =\tran XD^{-1}Y$ we have
\[\langle \tran FX, \tran FX\rangle_{D^{-1}}
 =
 \tran X F D^{-1}\tran F X\geq \frac1{\max \{d_1,\dotsc, d_p\}}\langle \tran FX,\tran FX\rangle,\]
so the right hand side is $\geq0$ and only vanishes when  $\tran FX=0$, i.e. $X=0$, as $F$ is surjective.

In particular, $F D^{-1}\tran F$ is invertible and
\[H=-(F D^{-1}\tran F)^{-1} F[1],\]
so $H$ has rational entries; therefore, so does $\lambda$.
\end{proof}

In analogy with the Nikolayevsky derivation, we will refer to the derivation $N$ in Theorem~\ref{thm:metricnik} as a \emph{metric Nikolayevsky derivation} of the metric Lie algebra. By construction, the eigenspaces of a metric Nikolayevsky derivation determine a grading satisfying~\eqref{eqn:co_gradation}; we can eliminate denominators and obtain:
\begin{corollary}\label{cor:metnikgrad}
Let $\g$ be a Lie algebra over $\K=\R,\C$ with a metric $g$. Then the eigenspaces of a metric Nikolayevsky derivation determine a grading $\g=\bigoplus \mb_i$; we can assume that the indices $i$ are integers. In addition, we have that $\mb_i$ is orthogonal to $\mb_j$ unless $i+j$ equals some constant $l$.
\end{corollary}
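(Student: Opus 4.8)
The plan is to obtain the grading directly from the eigenspace decomposition of a metric Nikolayevsky derivation $N$, so that the substance of the statement is already contained in Theorem~\ref{thm:metricnik}. That theorem provides a semisimple $N\in\co(\g,g)\cap\Der(\g)$ with rational eigenvalues. In particular these eigenvalues lie in $\K$: over $\C$ this is automatic, and over $\R$ rationality forces them to be real, so that $N$ is diagonalizable over the base field. I would therefore write $\g=\bigoplus_\lambda\mb_\lambda$, where $\mb_\lambda$ is the $\lambda$-eigenspace of $N$ and $\lambda$ ranges over the finitely many eigenvalues.

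Next I would verify that this decomposition is a grading. Since $N$ is a derivation, for $X\in\mb_\lambda$ and $Y\in\mb_\mu$ one has $N[X,Y]=[NX,Y]+[X,NY]=(\lambda+\mu)[X,Y]$, whence $[\mb_\lambda,\mb_\mu]\subseteq\mb_{\lambda+\mu}$. To arrange integer indices, let $q$ be a common denominator of the eigenvalues and relabel $\mb_i$ as the eigenspace for the eigenvalue $i/q$; condition~\eqref{eq:gradb} is preserved under this relabelling because $\lambda+\mu=\nu$ is equivalent to $q\lambda+q\mu=q\nu$.

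Finally I would read off the orthogonality from the conformality of $N$. Writing $N=A+\tfrac{l}{2}\id$ with $A\in\so(\g,g)$, we have $g(NX,Y)+g(X,NY)=l\,g(X,Y)$ for all $X,Y$. Evaluating this on $X\in\mb_\lambda$, $Y\in\mb_\mu$ gives $(\lambda+\mu)g(X,Y)=l\,g(X,Y)$, so $(\lambda+\mu-l)g(X,Y)=0$; thus $\mb_\lambda\perp\mb_\mu$ unless $\lambda+\mu=l$, and after rescaling the indices by $q$ this becomes precisely~\eqref{eqn:co_gradation} with constant $ql$. I expect no genuine obstacle here: the only points demanding care are checking that in the real case the eigenvalues are actually real (guaranteed by the rationality claim of Theorem~\ref{thm:metricnik}) and verifying that the harmless clearing of denominators preserves both the grading relation and the orthogonality relation. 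All the substantive work has already been carried out in the construction of $N$.
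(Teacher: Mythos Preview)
Your argument is correct and is exactly the approach the paper takes: the corollary is stated there as an immediate consequence of Theorem~\ref{thm:metricnik}, with the single remark that one can ``eliminate denominators'' to pass to integer indices. You have simply unpacked the two implicit steps---that a semisimple derivation yields a grading and that membership in $\co(\g,g)$ forces the orthogonality relation~\eqref{eqn:co_gradation}---which the paper leaves to the reader.
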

Since our goal is to identify gradings that satisfy~\eqref{eqn:co_gradation}, we observe that the grading defined by the metric Nikolayevsky derivation described in Corollary~\ref{cor:metnikgrad} is not the only possible choice. However, any other choice is ``compatible'' in the following sense:
\begin{proposition}
\label{lemma:restrictednikolayevsky}
Let $(\mg,g)$ be a metric Lie algebra endowed with a grading $\mg=\bigoplus\mb_i$ that satisfies~\eqref{eqn:co_gradation}. Then there exists a metric Nikolayevsky derivation inducing a grading
$\mg=\bigoplus\mb_i'$ with $\mb_i=\bigoplus_j \mb_i\cap \mb_j'$.
\end{proposition}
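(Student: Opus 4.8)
The plan is to exhibit a metric Nikolayevsky derivation that \emph{commutes} with the derivation generating the given grading; two commuting semisimple operators are simultaneously diagonalizable, and the compatibility $\mb_i=\bigoplus_j \mb_i\cap\mb_j'$ is then exactly the statement that their eigenspace decompositions refine one another.

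First I would record that the grading furnishes a canonical semisimple derivation. Set $N_0=\sum_t t\,\id_{\mb_t}$, acting as the scalar $t$ on $\mb_t$. Because $\mg=\bigoplus\mb_i$ is a grading, $N_0$ is a derivation; it is semisimple, with integer eigenvalues given by the grading indices; and it lies in $\co(\g,g)$ precisely because the grading satisfies~\eqref{eqn:co_gradation}, as remarked just before Theorem~\ref{thm:metricnik}. Hence $\K N_0$ is a one-dimensional fully reducible subalgebra of the algebraic Lie algebra $\co(\g,g)\cap\Der(\mg)$.

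Next I would invoke Mostow's theorem \cite{Mostow}, in the same form already used in the proof of Theorem~\ref{thm:metricnik}: every fully reducible subalgebra is contained in a maximal one, and all maximal fully reducible subalgebras are conjugate under $\CO(\g,g)\cap\Aut(\g)$. I can therefore choose a maximal fully reducible subalgebra $\lie{m}=\lie{s}+\lie{a}$ of $\co(\g,g)\cap\Der(\mg)$ with $N_0\in\lie{m}$, where $\lie{s}=[\lie{m},\lie{m}]$ is semisimple and $\lie{a}=Z(\lie{m})$ is its centre. Running the argument in the proof of Theorem~\ref{thm:metricnik} inside this particular $\lie{m}$ produces the unique element $N\in\lie{a}$ satisfying~\eqref{eqn:definingNik}; since that construction applies verbatim to any maximal fully reducible subalgebra and any two such choices are conjugate, $N$ is a bona fide metric Nikolayevsky derivation. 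The essential gain is that $N\in\lie{a}=Z(\lie{m})$ while $N_0\in\lie{m}$, so $[N,N_0]=0$.

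Finally, $N$ and $N_0$ are commuting operators, each diagonalizable over $\K$ (the eigenvalues of $N_0$ are integers, and those of $N$ are rational by Theorem~\ref{thm:metricnik}), hence simultaneously diagonalizable over $\K$. Letting $\mb_j'$ denote the eigenspaces of $N$—the grading associated to this metric Nikolayevsky derivation via Corollary~\ref{cor:metnikgrad}—simultaneous diagonalization gives $\g=\bigoplus_{i,j}(\mb_i\cap\mb_j')$, and restricting to the eigenvalue-$i$ space of $N_0$ yields the asserted refinement $\mb_i=\bigoplus_j(\mb_i\cap\mb_j')$. I expect the only delicate point to be the structural bookkeeping of the third step: confirming that $N_0$ can be placed inside a maximal fully reducible subalgebra and that the canonical element $N$ lands in its centre. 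Both rest on the decomposition $\co(\g,g)\cap\Der(\mg)=\lie{s}+\lie{a}+\lie{n}$ and Mostow's conjugacy theorem exactly as exploited in Theorem~\ref{thm:metricnik}, after which the simultaneous diagonalization is routine.
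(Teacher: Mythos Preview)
Your argument is correct, but the paper takes a more elementary route. Starting from an arbitrary metric Nikolayevsky derivation $N$, the paper simply decomposes $N=N_0+N'$ according to degree relative to the given grading $\g=\bigoplus\mb_i$ and checks directly that the degree-zero component $N_0$ is again a metric Nikolayevsky derivation: by Proposition~\ref{prop:thegradedpart} it is a derivation in $\co(\g,g)$, and for any $D\in\co(\g,g)\cap\Der(\g)$ one has $\Tr(N_0D)=\Tr(N_0D_0)=\Tr(ND_0)=\Tr D_0=\Tr D$, the first and second equalities because products of nonzero total degree are traceless. Since $N_0$ preserves each $\mb_i$, its eigenspace decomposition refines the grading.

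The two approaches differ in where the work is done. The paper's argument is a short trace computation and avoids any further appeal to Mostow; your argument instead re-enters the proof of Theorem~\ref{thm:metricnik}, choosing the maximal fully reducible subalgebra $\lie m$ to contain the grading derivation and noting that the metric Nikolayevsky element lies in $Z(\lie m)$, whence commutation and simultaneous diagonalization. Your version is conceptually clean and highlights that the metric Nikolayevsky derivation may be chosen to commute with any prescribed torus in $\co(\g,g)\cap\Der(\g)$; the paper's version is more self-contained and produces the compatible derivation explicitly as the graded component of any given one. Both yield the same conclusion.
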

\begin{proof}
Let $N$ be a metric Nikolayevsky derivation. For every derivation $D\in\co(\mg,g)$, we have
\[\Tr(N D)=\Tr D.\]
Given a derivation $D$, write $D=D_0+D'$, where $D_0$ has degree zero, i.e $D_0|_{\mb_i}=\pi_i\circ D|_{\mb_i}$. Then $D_0$ and $D'$ are also  derivations. If $D$ is anti-self-adjoint, then so is $D_0$.

By construction, $D'$ is traceless, so we must have
\[\Tr(N D')=0,\qquad \Tr(N D_0)=\Tr D_0.\]
Now it is clear that $N_0$ satisfies these conditions, so $N_0$ is a metric Nikolayevsky derivation.

Since $N_0$ preserves each $\mb_i$, each $\mb_i$ is the direct sum of the eigenspaces $\mb_i\cap\mb_j'$.
\end{proof}

\begin{remark}
Assuming that $\g$ is irreducible, the proof of Theorem~\ref{thm:metricnik} says a little more.

The rows of $F$ take the form $-e_i-e_j+e_k$. Then $F[1]=-[1]$, so
\[n(1-l/2)=\langle [1], H \rangle= \langle [1],(FD^{-1}\tran F)^{-1}[1]\rangle;\]
in particular, since $FD^{-1}\tran{F}$ is positive definite, $l<2$. In addition, notice that
\[\Tr N = \sum d_i\lambda_i = \sum d_i \frac{l}{2}=\frac {nl}2,\]
which coincides with $\Tr N^2\geq0$. So $l\geq0$, and $l=0$ only when $N=0$.
\end{remark}

\begin{example}
\label{ex:metricnikdepends}
The metric Nikolayevsky derivation depends indeed on the metric, as shown by the following. Consider the Lie algebra
$(0,0,12,13)$ and the two metrics
\[g_1=e^1\odot e^4+e^2\odot e^3,\quad g_2= e^1\odot e^3+e^2\odot e^4.\]
The generic derivation has the form
\[\begin{pmatrix}
-\lambda_3+\lambda_7&0&0&0\\
\lambda_1&2 \lambda_3-\lambda_7&0&0\\
\lambda_2&\lambda_6&\lambda_3&0\\
\lambda_4&\lambda_5&\lambda_6&\lambda_7
\end{pmatrix}.\]
The metric Nikolayevsky derivation relative to $g_1$ is $\diag(\frac13,\frac23,1,\frac43)$, which coincides with the Nikolayevsky derivation; the matrix $F$ appearing in the proof of Theorem~\ref{thm:metricnik} takes the form
\[\begin{pmatrix}
-2 & 1 & 0 &0\\
-1& -1 &1 & 0 \\
-1 & 0 & -1 & 1 \\
0 & -2 & 0 & 1
  \end{pmatrix}.\]
The metric Nikolayevsky derivation relative to $g_2$ is $\diag(\frac23,0,\frac23,\frac43)$.

The fact that the two metric Nikolayevsky derivations have different eigenvalues shows that $g_1$ and $g_2$ are not related by an isomorphism of the Lie algebra.

Notice that the metrics $g_1$ and $g_2$ are not ad-invariant.
\end{example}

For completeness, we prove the following:
\begin{proposition}
Let $(\g,g)$ be a metric Lie algebra over $\K=\R,\C$ with metric Nikolayevsky derivation $N$. Then:
\begin{enumerate}[label=(\roman*)]
\item If $(\hat \g,\hat g)$ is another metric Lie algebra over $\K$ with metric Nikolayevsky $\hat N$, then $N+\hat N$  is a metric Nikolayevsky derivation on $(\g\oplus\hat\g,g+\hat g)$.
\item If $\K=\R$, on the complexification $(\g^\C,\tilde g)$, $N^\C$ is a metric Nikolayevsky derivation.
\item If $\K=\C$, the metric Nikolayevsky derivation on  $(\overline\g,\overline g)$ is $\overline N$.
\item If $\K=\C$, the metric Nikolayevsky derivation on  $(\g^\R,\Re g)$ is $N^\R$.
\end{enumerate}
\end{proposition}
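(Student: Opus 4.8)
The plan is to treat all four parts with the same template. By Theorem~\ref{thm:metricnik} a metric Nikolayevsky derivation is, up to automorphisms preserving the conformal class, the unique semisimple element of $\co\cap\Der$ satisfying the trace identity~\eqref{eqn:definingNik}. So in each case I would produce the candidate derivation and verify three things: that it is semisimple, that it lies in $\co\cap\Der$ of the new metric Lie algebra, and that it satisfies~\eqref{eqn:definingNik} there. Semisimplicity is automatic throughout, since direct sum, complexification, conjugation and realification all preserve it. The real content is the trace identity, which rests on an explicit description of $\co\cap\Der$ under each construction.

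For (i) I would first describe $\co(\g\oplus\hat\g,g+\hat g)\cap\Der(\g\oplus\hat\g)$. Writing a derivation in block form $\begin{pmatrix} D_1 & A\\ B & D_2\end{pmatrix}$ relative to $\g\oplus\hat\g$, the derivation identity forces $D_1\in\Der\g$ and $D_2\in\Der\hat\g$ and makes the off-diagonal blocks kill the commutators and take values in the centers ($B(\g')=0$, $\im B\subseteq\mz(\hat\g)$, and symmetrically for $A$), while the conformal condition makes $D_1,D_2$ conformal and ties $A$ to $B$. Since $N\oplus\hat N$ is block diagonal, $\Tr\bigl((N\oplus\hat N)D\bigr)=\Tr(ND_1)+\Tr(\hat N D_2)$; as $D_1\in\co(\g,g)\cap\Der\g$ and $D_2\in\co(\hat\g,\hat g)\cap\Der\hat\g$, the defining property of $N$ and $\hat N$ turns this into $\Tr D_1+\Tr D_2=\Tr D$, the off-diagonal blocks being traceless. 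Once one checks that $N\oplus\hat N$ is conformal on $(\g\oplus\hat\g,g+\hat g)$, this gives~\eqref{eqn:definingNik}.

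For (ii) I would invoke Lemma~\ref{lemma:elementary}: every $\C$-linear $f\in\co(\g^\C,\tilde g)\cap\Der(\g^\C)$ is $A^\C+iB^\C$ with $A,B\in\co(\g,g)\cap\Der(\g)$. Then $N^\C f=(NA)^\C+i(NB)^\C$, and since $\Tr_\C(M^\C)=\Tr_\R M$ for every real operator $M$, the defining identity $\Tr(N\psi)=\Tr\psi$ on $\co(\g,g)\cap\Der(\g)$ gives $\Tr_\C(N^\C f)=\Tr A+i\Tr B=\Tr_\C f$. Part (iii) is the same computation with conjugation in place of complexification: by Lemma~\ref{lemma:trivial} the sets $\co\cap\Der$ and the adjoints are unchanged when passing to $\overline\g$, so only the complex trace changes, as conjugating the complex structure conjugates it, $\Tr_{\overline\g}h=\overline{\Tr_\g h}$ for every $\C$-linear $h$. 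Conjugating $\Tr_\g(N\psi)=\Tr_\g\psi$ then yields $\Tr_{\overline\g}(\overline N\psi)=\Tr_{\overline\g}\psi$, identifying $\overline N$ as the metric Nikolayevsky derivation of $(\overline\g,\overline g)$.

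Finally I would deduce (iv) from the first three rather than computing directly. Following the proof of Proposition~\ref{prop:solitaryiffrealsolitary}, the complexification $\mh=(\g^\R)^\C$ splits as a direct sum of orthogonal ideals $\mh^{1,0}\oplus\mh^{0,1}$ with $\mh^{1,0}\cong\g$ and $\mh^{0,1}\cong\overline\g$. By (i) and (iii) its metric Nikolayevsky derivation is $N\oplus\overline N$ — here the two conformal factors coincide because $N$ has rational, hence real, eigenvalues, so the direct-sum hypothesis of (i) holds. On the other hand $(N^\R)^\C$ commutes with the complexified $J$, hence preserves this splitting, and restricts to $N$ on $\mh^{1,0}$ and to $\overline N$ on $\mh^{0,1}$; thus $(N^\R)^\C=N\oplus\overline N$. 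Comparing with (ii), which asserts that the metric Nikolayevsky derivation of $\mh$ is the complexification of that of $\g^\R$, and using that complexification is injective on real operators, I conclude that $N^\R$ is the metric Nikolayevsky derivation of $(\g^\R,\Re g)$. The step I expect to be most delicate is exactly this conjugation bookkeeping in (iii) and (iv): keeping straight the three adjoints and the two traces $\Tr_\C,\Tr_\R$, and pinning down the identifications $\mh^{1,0}\cong\g$, $\mh^{0,1}\cong\overline\g$ so that the conjugation sending $N$ to $\overline N$ lands on the correct factor. By contrast the block-matrix analysis for (i) is routine, the only point needing care being that off-diagonal derivation components of $\g\oplus\hat\g$ contribute nothing to the relevant traces.
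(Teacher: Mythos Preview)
Your argument follows essentially the same route as the paper's: for (i) your block-matrix analysis is exactly what the paper phrases as the projection $\Der(\g\oplus\hat\g)\cap\co(\g\oplus\hat\g,g+\hat g)\to(\Der(\g)\cap\co(\g,g))\oplus(\Der(\hat\g)\cap\co(\hat\g,\hat g))$ having traceless kernel; for (ii) and (iii) both you and the paper rely on the identifications $\Der(\g^\C)=\Der(\g)\otimes\C$, $\co(\g^\C,\tilde g)=\co(\g,g)\otimes\C$ and their conjugate analogues (Lemmas~\ref{lemma:elementary} and~\ref{lemma:trivial}); and for (iv) both reduce to the previous items via the isometric splitting $((\g^\R)^\C,\widetilde{\Re g})\cong(\g,g)\oplus(\overline\g,\overline g)$ together with $(N^\R)^\C=N\oplus\overline N$. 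The only differences are cosmetic---you spell out the trace computations and the conformal-factor compatibility in (iv), while the paper is terser---so this is the same proof.
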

\begin{proof}
The first item follows from the fact that the projection
\[\Der (\g\oplus\hat\g)\cap \co(\g\oplus\hat\g,g+\hat g)\to(\Der(\g)\cap\co(\g,g)\oplus(\Der(\hat\g)\cap\co(\hat\g, \hat g))\]
is surjective and its kernel consists of traceless derivations.

The second follows from  the identifications
\[\Der(\g^\C)=\Der(\g)\otimes\C, \quad \co(\g^\C,\tilde g)=\co(\g,g)\otimes\C\]
(see the second item in  Lemma~\ref{lemma:elementary}). Indeed, these identifications imply that an endomorphism of $\g$ is a metric Nikolayevsky derivation if and only if its complexification is a metric Nikolayevsky derivation of $\g^\C$.

Similarly, the third follows from $\Der(\overline \g)=\overline{\Der (\g)}$, $\co(\overline \g,\overline g)=\overline{\co(\g,g)}$.

For the last one, we identify the complexification $((\g^\R)^\C,\widetilde{\Re g})$ with $(\g,g)\oplus(\overline\g,\overline g)$. By the first part of the proof, $N+\overline N$ is the metric Nikolayevsky derivation of $(\g^\R)^\C$; since $N+\overline N$ is the complexification of $N^\R$, it follows that $N^\R$ is the metric Nikolayevsky derivation of $(\g^\R,\Re g)$.
\end{proof}
\begin{remark}
The same arguments also prove the analogous statement for the ordinary Nikolayevsky derivation.
\end{remark}

\subsection{The metric Nikolayevsky derivation in the ad-invariant case}
In the ad-invariant case, we can use the metric Nikolayevsky derivation introduced in Theorem~\ref{thm:metricnik} to obtain a sufficient condition for the metric to be solitary.

In contrast with Example~\ref{ex:metricnikdepends}, there is a  class of metric Lie algebras for which the metric Nikolayevsky derivation does not depend on the metric, as it coincides with the Nikolayevsky derivation.

Let $e_1,\dotsc, e_n$ be a basis of a Lie algebra $\mg$. A metric is said to be \emph{$\sigma$-diagonal} if it has the form $\sum_i g_ie_i\odot e_{\sigma_i}$ for some $g_i\in \K^*$, with $\sigma$ an involution of $\{1,\dotsc, n\}$ (see \cite{ContiRossi:RicciFlat,ContiDelBarcoRossi}).
\begin{proposition}
Let $\{e_1,\dotsc, e_n\}$ be a basis on an irreducible Lie algebra $\mg$ that diagonalizes the Nikolayevsky derivation $N$. Assume that there exists a $\sigma$-diagonal ad-invariant metric $g$. Then the metric Nikolayevsky derivation coincides with the Nikolayevsky derivation.
\end{proposition}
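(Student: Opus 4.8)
The plan is to show that the ordinary Nikolayevsky derivation $\tilde N$ of $\g$ itself satisfies the defining conditions of a metric Nikolayevsky derivation in Theorem~\ref{thm:metricnik}, and then to invoke uniqueness. Recall that $\tilde N$ is semisimple, is a derivation, and satisfies $\Tr(\tilde N\psi)=\Tr\psi$ for \emph{every} derivation $\psi$; in particular this holds for every $\psi\in\co(\g,g)\cap\Der(\g)$, so the trace identity~\eqref{eqn:definingNik} is automatic. Thus the only substantive point is to verify that $\tilde N$ lies in $\co(\g,g)$. Once this is established, $\tilde N$ is a metric Nikolayevsky derivation, and by the uniqueness up to conformal automorphisms in Theorem~\ref{thm:metricnik} it coincides with $N$.

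First I would compute the $g$-adjoint of $\tilde N$ in the given basis. Write $\tilde N e_i=\lambda_i e_i$ and use that $g$ is $\sigma$-diagonal, so that $g(e_i,e_j)=0$ unless $j=\sigma_i$, with $g(e_i,e_{\sigma_i})=g_i$ and $g_i=g_{\sigma_i}$ by symmetry of $g$. A direct calculation with the Gram matrix then gives $\tilde N^{*}e_i=\lambda_{\sigma_i}e_i$, so $\tilde N^{*}$ is again diagonal in $\{e_1,\dotsc,e_n\}$ and
\[(\tilde N+\tilde N^{*})e_i=(\lambda_i+\lambda_{\sigma_i})e_i.\]
In particular $\tilde N+\tilde N^{*}$ is diagonalizable, hence semisimple, with eigenvalues $\lambda_i+\lambda_{\sigma_i}$ that are rational, and therefore real, since the eigenvalues of the Nikolayevsky derivation are rational. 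Membership $\tilde N\in\co(\g,g)$ is exactly the condition $\tilde N+\tilde N^{*}=l\,\id$, i.e.\ that these eigenvalues are all equal.

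The crux is therefore to prove that $\lambda_i+\lambda_{\sigma_i}$ is independent of $i$, and here I would exploit that $g$ is ad-invariant. By Lemma~\ref{lemma:symmetrized_derivation_ad_invariant}, $\tilde N+\tilde N^{*}$ is an ad-invariant, self-adjoint endomorphism. Being also semisimple with real eigenvalues on an irreducible Lie algebra, Corollary~\ref{cor:phihasoneortwoeigenvalues} forces it to be a multiple of the identity: its nilpotent part vanishes, and the term $bJ$ of case~\ref{it:rphi} is excluded because the eigenvalues are real. Alternatively, one may argue directly that by Proposition~\ref{prop:eigenvaluesofphi} each eigenspace of $\tilde N+\tilde N^{*}$ is an ideal, so the presence of two distinct eigenvalues would split $\g$ as a nontrivial direct sum of ideals, contradicting irreducibility. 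Either way $\lambda_i+\lambda_{\sigma_i}=l$ for a fixed constant $l$, whence $\tilde N+\tilde N^{*}=l\,\id$ and $\tilde N\in\co(\g,g)$, completing the proof.

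The main obstacle is precisely this last scalar conclusion: irreducibility is essential, since for a reducible $\g$ the operator $\tilde N+\tilde N^{*}$ may genuinely have several eigenvalues and $\tilde N$ may fail to be conformal. What makes the argument work is the interaction of the irreducibility hypothesis with the ad-invariance of $g$ through Lemma~\ref{lemma:symmetrized_derivation_ad_invariant} and Corollary~\ref{cor:phihasoneortwoeigenvalues}. The remaining ingredients---that $\tilde N$ is semisimple, a derivation, and satisfies the trace identity---require nothing beyond the definition of the Nikolayevsky derivation, so once conformality is in hand the statement follows at once from the uniqueness part of Theorem~\ref{thm:metricnik}.
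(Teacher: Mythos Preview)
Your proof is correct and somewhat cleaner than the paper's. Both arguments reduce to showing $\tilde N\in\co(\g,g)$, i.e.\ that $\lambda_i+\lambda_{\sigma_i}$ is independent of $i$, but they reach this conclusion by different routes.

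The paper builds the symmetric diagram $(\Delta,\sigma)$ from the structure constants $c_{ijk}=g([e_i,e_j],e_{\sigma_k})$, uses ad-invariance of $g$ to verify the arrow condition $i\xrightarrow{j}k\Rightarrow i\xrightarrow{\sigma_k}\sigma_j$, argues that connected components of $\Delta$ span ideals so that irreducibility forces $\Delta$ to be connected, and then applies Lemma~\ref{lemma:sprenice} to the vector $(\lambda_1,\dotsc,\lambda_n)\in\ker M_\Delta$. You short-circuit this combinatorics: Lemma~\ref{lemma:symmetrized_derivation_ad_invariant} makes $\tilde N+\tilde N^{*}$ ad-invariant in one stroke, and since it is diagonal with real eigenvalues, Proposition~\ref{prop:eigenvaluesofphi} turns its eigenspaces into ideals, whence irreducibility gives the scalar conclusion immediately. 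Your route is more conceptual and avoids the diagram machinery entirely; the paper's approach, by contrast, stays within the combinatorial framework (symmetric diagrams, $M_\Delta$) developed for Theorem~\ref{thm:metricnik}, and makes explicit that the mechanism here is precisely that of Lemma~\ref{lemma:sprenice}.
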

\begin{proof}
It suffices to show that $N$ lies in $\co(\mg,g)$. Let $\lambda_i,\dots,\lambda_n$ be the eigenvalues of $N$ with respect to the basis $\{e_1,\dotsc, e_n\}$; then
\[N+N^*=\sum_i\lambda_i (e^i\otimes e_i+ e^{\sigma_i}\otimes e_{\sigma_i})=\sum (\lambda_i + \lambda_{\sigma_i})e^i\otimes e_i;\]
therefore, it suffices to show that each $\lambda_i+\lambda_{\sigma_i}$ equals some constant $l$.

Let $\Delta$ be the digraph with nodes $\{1,\dotsc, n\}$ and arrows $i\xrightarrow{j}k$ when $c_{ijk}=g([e_i,e_j],e_{\sigma_k})$ is nonzero, i.e. when $[e_i,e_j]$ has a component along $e_k$. The ad-invariant condition gives
\[c_{ijk}=g([e_i,e_j],e_{\sigma_k})=g([e_{\sigma_k},e_i],e_{j})=c_{\sigma_ki\sigma_j},\]
so if $i\xrightarrow{j}k$ is an arrow, then $i\xrightarrow{\sigma_k}\sigma_j$ is also an arrow. Thus, $(\Delta,\sigma)$ is a symmetric  diagram. In addition $\Delta$ is connected: indeed, if $i$ and $k$ are in different connected components of $\Delta$, then $c_{ijk}=0$, so every connected component defines an ideal.

Since $N$ is a derivation, we have
\[g(N[e_i,e_j]-[Ne_i,e_j]-[e_i,Ne_j],e_{\sigma_k})=(\lambda_k-\lambda_i-\lambda_j )c_{ijk}=0.\]
This shows that $(\lambda_1,\dotsc, \lambda_n)$ is in $\ker M_\Delta$. By Lemma~\ref{lemma:sprenice}, there is a constant $l$ such that $\lambda_i+\lambda_{\sigma_i}=l$ for all $i$, which is what we had to prove.
\end{proof}
Recall that a Lie algebra is called \emph{nice} if it has a basis $\{e_1,\dotsc, e_n\}$ such that each Lie bracket $[e_i,e_j]$ is a multiple of an element of the basis and, denoting by $\{e^1,\dotsc, e^n\}$ the dual basis and by $d$ the Chevalley-Eilenberg differential, each $e_i\hook de^j$ is a multiple of some element of the dual basis (see \cite{Nikolayevsky,LauretWill:EinsteinSolvmanifolds}). We then say that $\{e_1,\dotsc, e_n\}$ is a \emph{nice basis}. A graded Lie algebra $\g=\bigoplus\mb_i$ such that each $\mb_i$ has dimension one is always nice, but the converse is not true.

On a nice Lie algebra, the Nikolayevsky derivation can be assumed to be diagonal relative to a nice basis (see \cite{Nikolayevsky}); as a corollary, we have:
\begin{corollary}\label{cor:metricNikIsNik}
On an irreducible Lie algebra with an ad-invariant metric which is  $\sigma$-diagonal with respect to a nice basis, the metric Nikolayevsky derivation coincides with the Nikolayevsky derivation.
\end{corollary}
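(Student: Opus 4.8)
The plan is to derive this corollary immediately from the preceding proposition; the two statements differ only in that the proposition takes as a hypothesis a basis that \emph{diagonalizes} the Nikolayevsky derivation, whereas here we are handed a nice basis with respect to which the ad-invariant metric is merely $\sigma$-diagonal. The whole content of the argument is therefore to arrange that a single basis witnesses both conditions at once.

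First I would fix the nice basis $\{e_1,\dotsc,e_n\}$ for which $g=\sum_i g_i\, e_i\odot e_{\sigma_i}$ is $\sigma$-diagonal, and then invoke the result of Nikolayevsky recalled just above, namely that on a nice Lie algebra the Nikolayevsky derivation can be taken diagonal relative to a nice basis. The point to verify is that this diagonalization applies to the \emph{given} nice basis, not merely to some nice basis: this holds because Nikolayevsky's derivation is computed from the root matrix attached to whichever nice basis one fixes, so its eigenvalues $(\lambda_1,\dotsc,\lambda_n)$ are read off in that basis and the diagonal operator $N=\sum_i\lambda_i\, e^i\otimes e_i$ is the Nikolayevsky derivation in it. Hence the chosen nice basis diagonalizes the Nikolayevsky derivation.

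With both hypotheses now expressed in the same basis, the conclusion follows at once from the preceding proposition: $\g$ is irreducible, the nice basis diagonalizes the Nikolayevsky derivation, and $g$ is a $\sigma$-diagonal ad-invariant metric, so the metric Nikolayevsky derivation coincides with the Nikolayevsky derivation. I do not expect any genuine obstacle beyond the basis-matching observation of the previous paragraph; granting it, no further computation is required, as all the real work has already been done in the proposition and in the cited diagonalization result.
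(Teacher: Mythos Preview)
Your proposal is correct and follows exactly the paper's intended route: the corollary is stated immediately after the sentence ``On a nice Lie algebra, the Nikolayevsky derivation can be assumed to be diagonal relative to a nice basis (see \cite{Nikolayevsky})'', and is meant to be read as a direct combination of that fact with the preceding proposition. Your care in checking that the diagonalization happens in the \emph{given} nice basis (because the pre-Einstein derivation is obtained from the root matrix of that basis) is a valid and slightly more explicit version of what the paper leaves implicit; no further argument is needed.
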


\begin{example}
\label{ex:adinvmetricnikdepends}
Even if we only consider ad-invariant metrics, the metric Nikolayevsky derivation  depends on the choice of the metric. Consider the cotangent of $\su(2)$, endowed with the ad-invariant metrics $g,h$ of Example~\ref{example:Tstarsu2}. The generic derivation has the form
\[D=\begin{pmatrix}
0&- \lambda_2&- \lambda_5&0&0&0\\
\lambda_2&0&- \lambda_6&0&0&0\\
\lambda_5&\lambda_6&0&0&0&0\\
0&- \lambda_1&- \lambda_3&\lambda_7&- \lambda_2&- \lambda_5\\
\lambda_1&0&- \lambda_4&\lambda_2&\lambda_7&- \lambda_6\\
\lambda_3&\lambda_4&0&\lambda_5&\lambda_6&\lambda_7
\end{pmatrix}.\]
Whilst all derivations are in $\co(\g,g)$, we have
\[\Der(\g)\cap \co(\g,h)=\{D\in\Der\g\st \lambda_7=0\}.\]
It follows that the metric Nikolayevsky derivations of $g$, $h$ are respectively
\[N_{g}=\diag(0,0,0,1,1,1), \qquad N_{h}=0.\]
\end{example}

We now establish a condition for a Lie algebra to be solitary in terms of the metric Nikolayevsky derivation.
\begin{theorem}
\label{thm:uniquenesstheorem}
Let $\g$ be a Lie algebra over $\K=\R,\C$ with an ad-invariant metric such that the eigenvalues of the metric Nikolayevsky derivation are positive. Then the metric  is \rigid.
\end{theorem}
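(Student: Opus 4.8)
The plan is to exhibit a grading of $\g$ for which Lemma~\ref{lemma:uniquenesslemma} applies. First I would take a metric Nikolayevsky derivation $N$ and consider the grading $\g=\bigoplus_i \mb_i$ by its eigenspaces, which by Corollary~\ref{cor:metnikgrad} can be assumed to have integer indices and to satisfy~\eqref{eqn:co_gradation} for some integer $l$; that is, $\mb_i\perp\mb_j$ unless $i+j=l$. By Lemma~\ref{lemma:uniquenesslemma}, it then suffices to show that every ad-invariant self-adjoint $\phi$ of degree $l$ has the form $D+D^*$ with $D$ a derivation of degree $l$. In fact I claim that no nonzero such $\phi$ exists, so that the hypothesis of Lemma~\ref{lemma:uniquenesslemma} holds vacuously.

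The heart of the argument is a numerical observation about the constant $l$. Since $g$ is nondegenerate and~\eqref{eqn:co_gradation} holds, the metric restricts to a nondegenerate pairing $\mb_i\times\mb_{l-i}\to\K$; hence $\dim\mb_i=\dim\mb_{l-i}$ and, in particular, $l-i$ is an eigenvalue of $N$ whenever $i$ is. By hypothesis every eigenvalue is positive, so $l-i>0$ for each eigenvalue $i$, giving $l>i$ for all eigenvalues; thus $l$ strictly exceeds the largest eigenvalue of $N$. Now a self-adjoint ad-invariant operator $\phi$ of degree $l$ maps $\mb_i$ into $\mb_{i+l}$, and since $i>0$ forces $i+l>l$, which by the previous sentence already exceeds every eigenvalue of $N$, the target space $\mb_{i+l}$ is zero for every $i$. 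Hence $\phi=0$, and Lemma~\ref{lemma:uniquenesslemma} yields that the metric is \rigid.

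I expect the only genuine content to be the identification of $l$ together with the observation that the degree-$l$ component is precisely the case not covered by Lemma~\ref{lemma:gradationadaptedtophi}: for every degree $k\neq l$ that lemma already writes the graded piece $\phi_k$ as $\frac{1}{l-k}(D+D^*)$, and the single remaining obstruction at $k=l$ is exactly what the positivity hypothesis removes. The potential pitfall is purely bookkeeping, namely ensuring that after clearing denominators the grading still satisfies~\eqref{eqn:co_gradation} with an \emph{integer} $l$ (guaranteed by Corollary~\ref{cor:metnikgrad}) and that the pairing argument is applied to the actual eigenvalues rather than to spurious zero summands; both are routine. No delicate estimate or case analysis beyond this is needed, so the proof should be short.
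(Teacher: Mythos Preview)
Your argument is correct and matches the paper's proof essentially line for line: both invoke Corollary~\ref{cor:metnikgrad} to obtain an integer grading satisfying~\eqref{eqn:co_gradation}, observe that positivity of the eigenvalues forces every index to lie strictly below $l$ so that degree-$l$ maps vanish, and conclude via Lemma~\ref{lemma:uniquenesslemma}. The paper phrases the key step slightly more tersely (``the indices range between $1$ and $l-1$''), but the content is identical.
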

\begin{proof}
The eigenspaces of the metric Nikolayevsky derivation determine a grading $\g=\bigoplus \mb_i$ satisfying~\eqref{eqn:co_gradation}, as proved in Corollary~\ref{cor:metnikgrad}; we can assume that the indices $i$ are positive integers. In particular, $\mb_i$ is orthogonal to $\mb_j$ unless $i+j$ equals some constant $l$. By construction, the indices range between $1$ and $l-1$, and thus every degree $l$ map is trivial. Hence,  Lemma~\ref{lemma:uniquenesslemma} applies.
\end{proof}

\begin{example}
The Boidol Lie algebra
\[(0,-12,13,-23)\]
is solvable and nice; it admits the $\sigma$-diagonal ad-invariant metric $e^1\odot e^4+e^2\odot e^3$. The metric Nikolayevsky derivation is
$\diag(0,\frac23,\frac23,\frac43)$, so Theorem~\ref{thm:uniquenesstheorem} does not apply. Nevertheless, this Lie algebra is \rigid, since it admits a unique ad-invariant metric \cite{dBOV} (see also Lemma~\ref{lm:solv4dim} below).
\end{example}

Nilpotent examples of \rigid Lie algebras such that the metric Nikolayevsky derivation does not have positive eigenvalues will be constructed in Section~\ref{sec:Cotangents}.

It follows from the above example that the the converse of Theorem~\ref{thm:uniquenesstheorem} is not true: given a solitary ad-invariant metric $g$, the metric Nikolayevsky derivation is not necessarily positive. Notice however that it cannot be zero, because the \rigid condition implies the existence of a derivation $D$ such that $\id=D+D^*$, giving a derivation $D\in\co(\g,g)$ with nonzero trace.

\section{Cotangent Lie algebras}\label{sec:Cotangents}
In this section we study the solitary condition for cotangent Lie algebras, which one can always endow with a canonical ad-invariant metric. In the first part of the section, we characterize cotangent Lie algebras for which this (and thus every) ad-invariant metric is solitary. The second part is devoted to the computation of the metric Nikolayevsky derivation corresponding to the canonical ad-invariant metric, which coincides with the Nikolayevsky derivation of the cotangent Lie algebra.

\subsection{The solitary condition for the canonical metric}
Recall that if $\g$ is a Lie algebra, then the cotangent Lie algebra $T^*\g$ is defined as a semidirect product $\g\ltimes\g^*$, where $\mg^*$ is an abelian ideal, and the action of $\mg$ on $\mg^*$ is given by
\[[X,\alpha]=\ad_X^*\alpha=-\alpha\circ\ad_X, \quad\text{ for all }X\in \mg,\; \alpha\in \mg^*.\]
The cotangent Lie algebra has a canonical ad-invariant metric $g$ induced by the pairing of $\g$ with $\g^*$, namely
\begin{equation}\label{eqn:CanonicalCotangentMetric}
g(X+\alpha,Y+\beta)=\alpha(X)+\beta(Y),\quad\text{ for all }X,Y\in \mg,\; \alpha,\beta\in \mg^*.
\end{equation}
Let $P\colon T^*\mg\to T^*\mg$ be the projection onto $\mg^*$. If $P^*$ denotes its adjoint with respect to $g$, then $P+P^*=\id$. Therefore, $g$ is weakly solitary if and only if it is solitary.
In addition, on an irreducible real Lie algebra the \rigid condition implies uniqueness of the ad-invariant metric up to sign (see Theorem~\ref{thm:rigidimpliesuniqueness}). However, on a cotangent $\g\ltimes \g^*$ the reflection across $\g$, i.e. $(x,\eta)\mapsto (x,-\eta)$, is a Lie algebra isomorphism, which relates the canonical ad-invariant metric to its opposite. Therefore, if $T^*\mg$ is irreducible and solitary, then it admits a unique ad-invariant metric up to isomorphisms.

Every cotangent $T^*\mg= \g\ltimes\g^*$ admits the grading
\begin{equation}
 \label{eqn:grading_of_cotangent}
 \g\ltimes\g^*=\mb_0\oplus\mb_1, \text{ where }\quad \mb_0=\g,\ \mb_1=\g^*,
\end{equation}
which satisfies $\mb_i\bot\mb_j$ if $i+j\neq 1$ with respect to $g$, so~\eqref{eqn:co_gradation} holds for $l=1$. Notice that $\mb_i$ is the eigenspace corresponding to the eigenvalue $i$ of the projection $P$, for $i=0,1$.

The following notation will be useful. Given a linear map  $\phi\colon\g\to\g^*$, its transpose $\tran\phi\colon\g^{**}\to\g^*$ can be identified with a map from $\g$ to $\g^*$. We say $\phi$ is \emph{symmetric} if $\phi=\tran\phi$ in this sense, i.e. $\phi(X)(Y)=\phi(Y)(X)$ for all $X,Y\in\mg$. We say that $\phi$ is \emph{ad-invariant} (resp. a \emph{derivation}) if so is the endomorphism of the cotangent $\g\ltimes\g^*$ obtained from $\phi$ by extending by zero.

In the above notation, a linear map $\phi\colon\g\to\g^*$ is symmetric if and only if its extension to $T^*\mg$, defining it by zero on $\mg^*$, is $g$-self-adjoint.
The ad-invariant condition for a linear map $\phi\colon\g\to\g^*$ is equivalent to
\[\phi[X,Y]=[X,\phi(Y)],\qquad \text{ for all }X,Y\in\g,\]
i.e. $\phi\circ \ad_X = \ad_X^*\circ\phi$ for all $X\in\g$.
Similarly, a linear map $D\colon\g\to\g^*$ is a derivation if and only if
\begin{align*}
D[X,Y](Z)= g( D[X,Y],Z) &=
g( [DX,Y],Z) + g( [X,DY],Z)\\
&=DX[Y,Z] -DY[X,Z], \qquad \text{ for all }X,Y,Z\in \mg,
\end{align*}
i.e.
\[D\circ\ad_X -\ad_X^*\circ D +d(DX)=0,\]
where $d(DX)$ is the linear map $\g\to\g^*$ determined by the Chevalley-Eilenberg differential of $DX\in\g^*$, namely  $g(d(DX)Y,Z)= -DX([Y,Z])$, for all $Y,Z\in\mg$.

\begin{proposition}
\label{prop:Tstarsolitary}
The canonical metric on $T^*\g$ is \rigid if and only if every ad-invariant symmetric map $\phi\colon \g\to\g^*$ has the form $D+\tran D$, with $D\colon\g\to\g^*$ a derivation.
\end{proposition}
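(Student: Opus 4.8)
The plan is to work with the grading $T^*\g=\mb_0\oplus\mb_1$, $\mb_0=\g$, $\mb_1=\g^*$ of~\eqref{eqn:grading_of_cotangent}, which satisfies~\eqref{eqn:co_gradation} with $l=1$. The starting observation is that a linear map $T^*\g\to T^*\g$ has degree $1$ precisely when it maps $\g$ into $\g^*$ and annihilates $\g^*$; hence degree-$1$ endomorphisms are exactly the extensions by zero of linear maps $\phi\colon\g\to\g^*$, and by the identifications recalled before the statement such an extension is self-adjoint (resp. ad-invariant) if and only if $\phi$ is symmetric (resp. ad-invariant). The key computation is to pin down the $g$-adjoint of such an extension: a direct check using~\eqref{eqn:CanonicalCotangentMetric} shows that if $D\colon\g\to\g^*$ is extended by zero then its adjoint is $\tran D$, again extended by zero; this is simply the unlabelled form of the ``symmetric iff self-adjoint'' equivalence stated above. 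I also record that, since~\eqref{eqn:co_gradation} holds with $l=1$, item (iii) of Proposition~\ref{prop:thegradedpart} forces the adjoint of a homogeneous map to have the \emph{same} degree (for $f$ homogeneous of degree $m$ one gets $f^*=(f_m)^*=(f^*)_m$, so $f^*$ is again homogeneous of degree $m$). Together these remarks yield a bijection $\phi\mapsto\Phi$ between ad-invariant symmetric maps $\g\to\g^*$ and degree-$1$ ad-invariant self-adjoint endomorphisms of $T^*\g$, under which $\phi=D+\tran D$ (with $D$ a derivation) corresponds to $\Phi=E+E^*$ with $E$ a degree-$1$ derivation.

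For the ``if'' direction I would invoke Lemma~\ref{lemma:uniquenesslemma} with $l=1$: it suffices to show that every ad-invariant self-adjoint map of degree $1$ is of the form $E+E^*$ for a degree-$1$ derivation $E$. Such a map is the extension $\Phi$ of some ad-invariant symmetric $\phi\colon\g\to\g^*$; by hypothesis $\phi=D+\tran D$ for a derivation $D\colon\g\to\g^*$, and extending $D$ by zero gives a degree-$1$ derivation $E$ with $E+E^*=\Phi$. Lemma~\ref{lemma:uniquenesslemma} then delivers solitariness, the degrees $0$ and $-1$ being handled automatically inside that lemma through Lemma~\ref{lemma:gradationadaptedtophi}.

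For the ``only if'' direction I would start from an ad-invariant symmetric $\phi\colon\g\to\g^*$, form its degree-$1$ extension $\Phi$ (ad-invariant and self-adjoint), and use solitariness to write $\Phi=E+E^*$ for a derivation $E$ of $T^*\g$. Decomposing $E=\sum_kE_k$ into graded parts, each $E_k$ is a derivation by Proposition~\ref{prop:thegradedpart}, and since the adjoint preserves degree the degree-$1$ component of $E+E^*$ is $E_1+E_1^*$. Comparing with $\Phi$, which is homogeneous of degree $1$, gives $\Phi=E_1+E_1^*$ with $E_1$ a degree-$1$ derivation. Restricting to $\g$, $E_1$ is the extension of a derivation $D\colon\g\to\g^*$ and $E_1^*$ is the extension of $\tran D$, whence $\phi=D+\tran D$.

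The main obstacle is the bookkeeping around the adjoint rather than any deep idea: first verifying $D^{*}=\tran D$ for extensions by zero, and—more importantly—observing that with $l=1$ the adjoint preserves the grading degree, which is exactly what allows one to extract, from a general symmetrized derivation $E+E^*$ equal to a purely degree-$1$ operator, a \emph{homogeneous} degree-$1$ derivation $E_1$. Once this is in place the two directions are a routine translation between degree-$1$ endomorphisms of $T^*\g$ and linear maps $\g\to\g^*$, and the whole content of the solitary condition is concentrated in the single degree $l=1$.
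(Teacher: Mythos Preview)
Your proof is correct and follows essentially the same approach as the paper: both arguments use the grading~\eqref{eqn:grading_of_cotangent} with $l=1$, invoke Lemma~\ref{lemma:uniquenesslemma} for the ``if'' direction, and for the ``only if'' direction extract the degree-$1$ component of a symmetrized derivation via Proposition~\ref{prop:thegradedpart} together with the identification $D^*=\tran D$. Your treatment is slightly more explicit about why the adjoint preserves degree, but the structure and ingredients are the same.
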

\begin{proof}
Consider the grading~\eqref{eqn:grading_of_cotangent} of $T^*\mg$, which satisfies~\eqref{eqn:co_gradation} for the canonical metric and for $l=1$.

Assume that the canonical metric $g$ on $T^*\mg$ is \rigid and let  $\phi_1\colon\mb_0\to\mb_1$ be an ad-invariant symmetric map. Let $\phi\colon T^*\g\to T^*\g$ be the extension by zero on $\mb_1$ of $\phi_1$; by definition, $\phi$ is ad-invariant on $T^*\mg$ and self-adjoint with respect to $g$. By hypothesis, $\phi$ has the form $D+D^*$ for some derivation $D\colon\mg\to\mg^*$ of the cotangent. However, with respect to the splitting~\eqref{eqn:grading_of_cotangent}, we have that the degree one component of $D+D^*$ (resp. $\phi$) is $D_1+D_ 1^*$ (resp. $\phi_1$), where $D_1$ is the degree one component of $D$ which is also a derivation by Proposition~\ref{prop:thegradedpart}. Finally, we observe that $\tran{D_1}=D_1^*$, hence we have $\phi_1= D_1+\tran D_1$.

For the converse, we apply Lemma~\ref{lemma:uniquenesslemma} to the grading~\eqref{eqn:grading_of_cotangent}. Any ad-invariant $g$-self-adjoint map $\phi\colon T^*\mg\to T^*\mg$ of degree $l=1$ is zero on $\mg^*$ and has image contained $\mg^*$. Hence $\phi\colon\mg\to\mg^*$ and it is ad-invariant and symmetric in the notations above. Hence $\phi=D+\tran{D}$ for some derivation $D\colon\mg\to\mg^*$. Since $\tran{D}=D^*$ with respect to $g$, Lemma~\ref{lemma:uniquenesslemma} applies and the canonical metric $g$ on $T^*\mg$ is \rigid.
\end{proof}

It will be useful to have a name for Lie algebras satisfying the condition of Proposition~\ref{prop:Tstarsolitary}, so we give the following:
\begin{definition}\label{def:adsolitary}
A Lie algebra $\g$ is \emph{\adsolitary} if for every ad-invariant symmetric $\phi\colon\g\to\g^*$ there is a derivation $D\colon\g\to\g^*$ such that $\phi=D+\tran D$.
\end{definition}

Using Definition~\ref{def:adsolitary}, Proposition~\ref{prop:rigid_independent} can be rephrased  as follows:
\begin{corollary}
\label{cor:cotangentunique}
A Lie algebra $\g$ is \adsolitary if and only if $T^*\g$ is \rigid.
\end{corollary}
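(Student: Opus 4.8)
The plan is simply to chain the three immediately preceding results, since this corollary is essentially a bookkeeping statement that combines them. First I would unwind the definitions on both sides of the asserted biconditional. By Definition~\ref{def:adsolitary}, asserting that $\g$ is \adsolitary means precisely that every ad-invariant symmetric map $\phi\colon\g\to\g^*$ admits a decomposition $\phi=D+\tran D$ with $D\colon\g\to\g^*$ a derivation. On the other side, by the convention fixed immediately after Proposition~\ref{prop:rigid_independent}, saying that the Lie algebra $T^*\g$ is \rigid means that one (equivalently every) ad-invariant metric on $T^*\g$ is \rigid.

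Next I would invoke Proposition~\ref{prop:Tstarsolitary}, whose statement is exactly that the canonical ad-invariant metric on $T^*\g$ is \rigid if and only if every ad-invariant symmetric $\phi\colon\g\to\g^*$ has the form $D+\tran D$. By the first reduction, the right-hand condition here is verbatim the statement that $\g$ is \adsolitary. Hence $\g$ is \adsolitary if and only if the canonical metric on $T^*\g$ is \rigid.

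Finally, I would apply Proposition~\ref{prop:rigid_independent} to the Lie algebra $T^*\g$ together with its canonical ad-invariant metric: this shows that the canonical metric is \rigid if and only if every ad-invariant metric on $T^*\g$ is \rigid, which is precisely what it means for $T^*\g$ to be \rigid as a Lie algebra. Combining the two equivalences yields the stated biconditional. I do not expect any genuine obstacle, since no new computation is required; the only point demanding care is to keep straight the two senses of ``\rigid'' in play --- the property of the fixed canonical metric, as used in Proposition~\ref{prop:Tstarsolitary}, versus the metric-independent property of the Lie algebra $T^*\g$ encoded in the convention --- and it is exactly Proposition~\ref{prop:rigid_independent} that reconciles the two.
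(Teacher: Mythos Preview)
Your proposal is correct and matches the paper's approach: the paper treats this corollary as an immediate rephrasing, stating only that ``using Definition~\ref{def:adsolitary}, Proposition~\ref{prop:rigid_independent} can be rephrased as follows,'' without a separate proof. You have simply spelled out explicitly the two ingredients involved --- Proposition~\ref{prop:Tstarsolitary} to pass between the \adsolitary condition on $\g$ and solitariness of the canonical metric on $T^*\g$, and Proposition~\ref{prop:rigid_independent} to pass from the canonical metric to the Lie algebra $T^*\g$ itself --- which is exactly what is needed.
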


\begin{remark}
\label{rk:WWadsolitary}
Any $\phi\colon \g\to\g^*$ such that $\phi(\g')=0$ and $\im\phi\subset\Ann\g'$ is both ad-invariant and a derivation, because at the level of $\g\ltimes\g^*$ it satisfies~\eqref{eqn:phiWZ}.
\end{remark}

In analogy to Proposition~\ref{prop:decomposablesolitary}, we have:
\begin{proposition}
\label{prop:reducibleadsolitary}
If $\g$ is reducible, then $\g$ is \adsolitary if and only if each irreducible component is \adsolitary.
\end{proposition}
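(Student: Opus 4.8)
The plan is to reduce the statement to Proposition~\ref{prop:decomposablesolitary} by showing that the cotangent construction is compatible with direct sums. Concretely, writing $\g=\g_1\oplus\g_2$, I would first establish an isomorphism of metric Lie algebras
\[T^*\g\cong T^*\g_1\oplus T^*\g_2,\]
where the right-hand side carries the orthogonal direct sum of the two canonical metrics, and then invoke the previously established results to conclude.

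To prove this decomposition, recall $T^*\g=\g\ltimes\g^*$ with $\g^*=\g_1^*\oplus\g_2^*$, and observe that since $[\g_1,\g_2]=0$, for $X\in\g_1$ the coadjoint action $\ad_X^*=-\,\cdot\circ\ad_X$ annihilates $\g_2^*$ and preserves $\g_1^*$ (and symmetrically for $X\in\g_2$). Hence $\g_1\oplus\g_1^*$ and $\g_2\oplus\g_2^*$ are complementary ideals of $T^*\g$, naturally isomorphic to $T^*\g_1$ and $T^*\g_2$. Moreover the canonical metric~\eqref{eqn:CanonicalCotangentMetric} pairs $\g_i$ only with $\g_i^*$, so these two ideals are orthogonal and $g$ restricts to the canonical metric on each factor. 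This yields the asserted orthogonal decomposition of $(T^*\g,g)$.

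With the decomposition in hand the conclusion is immediate: by Proposition~\ref{prop:decomposablesolitary}, $T^*\g$ is \rigid if and only if $T^*\g_1$ and $T^*\g_2$ are \rigid, and by Corollary~\ref{cor:cotangentunique} this is exactly the assertion that $\g$ is \adsolitary if and only if $\g_1$ and $\g_2$ are. For the statement as phrased, I would fix a decomposition of $\g$ into irreducible ideals $\g=\g_1\oplus\cdots\oplus\g_k$ and iterate the two-factor case, obtaining $T^*\g=\bigoplus_i T^*\g_i$ as an orthogonal direct sum; this is \rigid precisely when each $T^*\g_i$ is, i.e. when each irreducible component $\g_i$ is \adsolitary.

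The only point requiring genuine care — and hence the main thing to get right — is the structural verification that $\g_i\oplus\g_i^*$ is truly an ideal of $T^*\g$ and that the canonical metric splits orthogonally along this decomposition; once this is checked, the remainder is a formal appeal to Proposition~\ref{prop:decomposablesolitary} and Corollary~\ref{cor:cotangentunique}. The underlying reason the argument works is that the coadjoint action of one factor on the dual of the other vanishes, which is exactly what decouples the two cotangent summands.
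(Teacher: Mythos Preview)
Your argument is correct, but it follows a different route from the paper's. The paper works directly with Definition~\ref{def:adsolitary}: given $\g=\g_1\oplus\g_{-1}$ and an ad-invariant symmetric $\phi\colon\g\to\g^*$, it splits $\phi=\phi_++\phi_-$ into a block-diagonal part $\phi_+$ (sending $\g_\epsilon$ to $\g_\epsilon^*$) and an off-diagonal part $\phi_-$ (sending $\g_\epsilon$ to $\g_{-\epsilon}^*$), then shows that $\phi_-$ automatically kills $\g'$ and hence is itself a derivation by Remark~\ref{rk:WWadsolitary}. Thus the obstruction lives entirely in $\phi_+$, which decouples into the two factors.

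Your approach instead lifts the whole question to the cotangent: you observe that $T^*(\g_1\oplus\g_2)\cong T^*\g_1\oplus T^*\g_2$ as metric Lie algebras and then quote Proposition~\ref{prop:decomposablesolitary} and Corollary~\ref{cor:cotangentunique}. This is more conceptual and arguably cleaner, since it reduces the statement to results already in hand rather than repeating a block-decomposition argument at the level of maps $\g\to\g^*$. The paper's direct proof, on the other hand, is self-contained at the level of the definition and makes explicit \emph{why} the off-diagonal piece is harmless (it satisfies~\eqref{eqn:phiWZ}); this extra information is sometimes useful elsewhere. Both arguments ultimately hinge on the same fact, namely that the cross-action between the two factors vanishes.
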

\begin{proof}
Let $\g=\g_{1}\oplus\g_{-1}$, and let $\phi\colon\g\to\g^*$ be ad-invariant and symmetric; decompose $\phi$ as $\phi_++\phi_-$, where
\[\phi_\pm(\g_\epsilon)\subset \g_{\pm \epsilon}, \quad \epsilon\in\{1,-1\}.\]
It is clear that $\phi_+$ is again ad-invariant and symmetric. Therefore, so is $\phi_-$.

For $X,Y\in\g_\epsilon$ we have
\[\phi_-([X,Y])=[X,\phi_-(Y)]\in [\g_\epsilon,\g_{-\epsilon}]=0;\]
it follows that $\phi_-(\g')=0$; therefore, $\phi_-$ is itself a symmetric derivation (see also Remark~\ref{rk:WWadsolitary}).

It follows that $\phi$ can be written in the form $D+D^*$ if and only if so can $\phi_+$. Since $\phi_+$ preserves $\g_1$ and $\g_{-1}$, we see that $\g$ is \adsolitary if and only if $\g_1$ and $\g_{-1}$ are \adsolitary.
\end{proof}

The following lemma shows that when $\mg$ admits an ad-invariant metric, ad-invariant symmetric maps from $\mg$ to $\mg^*$ can be identified with ad-invariant self-adjoint endomorphisms of $\mg$.

\begin{lemma}
\label{lemma:cotangentofadinvariant}
Let $\g$ be a Lie algebra with an ad-invariant metric $g$, let $f\colon \g\to\g^*$ be a linear map, and let $f^\sharp\colon\g\to\g$ be the map satisfying
\[ f(X)(Y)=g(f^\sharp(X),Y) \quad \text{ for all }
X,Y\in\mg.\]
Then $f$ is ad-invariant (resp. a derivation) if and only if $f^\sharp$ is ad-invariant (a derivation); in addition, $(\tran f)^\sharp=(f^\sharp)^*$.
\end{lemma}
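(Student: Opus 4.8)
The plan is to translate, term by term, each of the three defining properties of $f\colon\g\to\g^*$ into an identity involving $f^\sharp$, using only the defining relation $f(X)(Y)=g(f^\sharp X,Y)$, the ad-invariance of $g$ (equation~\eqref{eq:adinvdef}), and nondegeneracy. The guiding principle is that the pairing $f(X)(Y)$ is literally $g(f^\sharp X,Y)$, while the coadjoint action $[X,\alpha]=-\alpha\circ\ad_X$ that enters the cotangent conditions can be converted, through ad-invariance of $g$, into an honest bracket on $\g$. Each condition on $f$ then collapses, after stripping a test vector $Z$ by nondegeneracy, into exactly the corresponding condition on the endomorphism $f^\sharp$.

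For the ad-invariant case I would start from $f[X,Y]=[X,f(Y)]$ evaluated on $Z$, which reads $f[X,Y](Z)=-f(Y)([X,Z])$. Rewriting both sides via $f^\sharp$ gives $g(f^\sharp[X,Y],Z)=-g(f^\sharp Y,[X,Z])$, and ad-invariance turns the right-hand side into $g([X,f^\sharp Y],Z)$; nondegeneracy then yields $f^\sharp[X,Y]=[X,f^\sharp Y]$, the ad-invariance of $f^\sharp$. All steps are reversible, giving the equivalence. For the derivation case I would use the explicit condition recalled just before the lemma, namely $f[X,Y](Z)=f(X)([Y,Z])-f(Y)([X,Z])$; converting all three terms through $f^\sharp$ and applying ad-invariance to each bracket on the right produces $g(f^\sharp[X,Y],Z)=g([f^\sharp X,Y]+[X,f^\sharp Y],Z)$, and nondegeneracy gives the Leibniz rule $f^\sharp[X,Y]=[f^\sharp X,Y]+[X,f^\sharp Y]$, i.e. $f^\sharp$ is a derivation of $\g$.

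The final identity is the most direct: unravelling the definitions, $g((\tran f)^\sharp X,Y)=\tran f(X)(Y)=f(Y)(X)=g(f^\sharp Y,X)=g(X,f^\sharp Y)=g((f^\sharp)^* X,Y)$ for all $X,Y$, so nondegeneracy forces $(\tran f)^\sharp=(f^\sharp)^*$. There is no serious obstacle here; the only thing to watch is sign bookkeeping, since two different operations each produce minus signs — the coadjoint action $[X,\alpha]=-\alpha\circ\ad_X$ and the ad-invariance relation $g([X,B],Z)=-g(B,[X,Z])$ — and one must check that, in both the ad-invariant and the derivation computations, these signs combine so as to reproduce the correct sign-free bracket identities for $f^\sharp$.
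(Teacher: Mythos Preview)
Your proposal is correct and follows essentially the same approach as the paper: translate each identity through the relation $f(X)(Y)=g(f^\sharp X,Y)$, use ad-invariance of $g$ to convert coadjoint brackets into genuine brackets, and apply nondegeneracy. The paper's computation is phrased with the equivalent form $f[X,Y]=[fX,Y]$ rather than $f[X,Y]=[X,fY]$, but this is a cosmetic difference, and the derivation case and the identity $(\tran f)^\sharp=(f^\sharp)^*$ are handled identically.
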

\begin{proof}
For $X,Y,Z$ in $\g$, we have
\begin{multline*}
g(f^\sharp[X,Y]-[f^\sharp X,Y],Z)=
 g(f^\sharp[X,Y],Z)-g(f^\sharp X,[Y,Z])\\
 =(f[X,Y])(Z)
 -(f X)([Y,Z])
= (f[X,Y]-[f X,Y])(Z),
\end{multline*}
so nondegeneracy of $g$ implies that $f$ is ad-invariant if and only if so is $f^\sharp$. A similar reasoning proves that $f$ is a derivation if and only if $f^\sharp$ is a derivation.

The last statement follows from the following equality, for $X,Y\in\mg$:
\[g((f^\sharp)^* X,Y)=g(X,f^\sharp Y)=( f Y)(X) = (\tran f X)(Y)=g((\tran f)^\sharp X,Y).\qedhere\]
\end{proof}
\begin{remark}\label{rem:notadinv}
Given a Lie algebra $\mg$, one can easily prove that there is a bijective ad-invariant symmetric $\phi\colon\mg\to\mg^*$ if and only if $\mg$ admits an ad-invariant metric. Indeed, given such a $\phi$, then $g(X,Y)=\phi(X)(Y)$ defines an ad-invariant metric on $\mg$. The converse is proved by taking an ad-invariant metric $g$ on $\mg$ and $\phi\colon\g\to\g^*$ such that $\phi^\sharp=\id$ as in Lemma~\ref{lemma:cotangentofadinvariant}.
\end{remark}

A Lie algebra $\mg$ admitting ad-invariant metrics can be ``\rigid'' in two different ways: $\mg$ can be \rigid or it can be $T^*$-\rigid, but the distinction turns out to be apparent:
\begin{theorem}\label{thm:rigiffTrig}
Let $\g$ be a Lie algebra with an ad-invariant metric $g$. Then $\g$ is \rigid if and only if it is \adsolitary.
\end{theorem}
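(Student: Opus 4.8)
The plan is to leverage the musical correspondence $f\mapsto f^\sharp$ of Lemma~\ref{lemma:cotangentofadinvariant}, which is available precisely because $\g$ carries an ad-invariant metric $g$. Since $g$ is nondegenerate, the assignment determined by $f(X)(Y)=g(f^\sharp X,Y)$ is a linear bijection between $\Hom(\g,\g^*)$ and $\End(\g)$. The theorem will follow once the defining condition of the \adsolitary property (a statement about maps $\g\to\g^*$, as in Definition~\ref{def:adsolitary}) is transported across this bijection into the defining condition of the \rigid property (a statement about endomorphisms of $\g$).

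First I would record the dictionary supplied by Lemma~\ref{lemma:cotangentofadinvariant}, namely that $f$ is ad-invariant, respectively a derivation, if and only if $f^\sharp$ is, together with the identity $(\tran f)^\sharp=(f^\sharp)^*$. From the latter I would deduce the one point not literally stated in the lemma: $f\colon\g\to\g^*$ is symmetric if and only if $f^\sharp$ is $g$-self-adjoint, since $f=\tran f$ is equivalent to $f^\sharp=(f^\sharp)^*$. Consequently $\sharp$ restricts to a bijection between ad-invariant symmetric maps $\g\to\g^*$ and ad-invariant self-adjoint endomorphisms of $\g$, and it carries derivations to derivations.

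Next I would observe that for a derivation $D\colon\g\to\g^*$ one has $(D+\tran D)^\sharp=D^\sharp+(D^\sharp)^*$, with $D^\sharp$ a derivation of $\g$; conversely, by bijectivity every derivation $E$ of $\g$ equals $D^\sharp$ for a unique derivation $D\colon\g\to\g^*$. Hence $\sharp$ carries the set of maps of the form $D+\tran D$, with $D$ a derivation into $\g^*$, exactly onto the set of maps $E+E^*$, with $E$ a derivation of $\g$.

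Putting these together, $\g$ is \adsolitary if and only if every ad-invariant symmetric $\phi\colon\g\to\g^*$ equals $D+\tran D$ for some derivation $D$, which upon applying $\sharp$ is equivalent to every ad-invariant self-adjoint $\phi^\sharp\colon\g\to\g$ having the form $D^\sharp+(D^\sharp)^*$, i.e.\ to $\g$ being \rigid. I do not expect a genuine obstacle here beyond the bookkeeping of the correspondence; the only step requiring care is the symmetric versus self-adjoint equivalence, which is immediate from $(\tran f)^\sharp=(f^\sharp)^*$ but is the hinge that makes the two conditions literally match.
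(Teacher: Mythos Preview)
Your proposal is correct and follows essentially the same route as the paper: both arguments use the musical bijection $f\mapsto f^\sharp$ of Lemma~\ref{lemma:cotangentofadinvariant} (and its inverse $\flat$) to match ad-invariant symmetric maps $\g\to\g^*$ with ad-invariant self-adjoint endomorphisms of $\g$, and derivations with derivations, so that the identity $(\tran f)^\sharp=(f^\sharp)^*$ turns the equation $\phi=D+\tran D$ into $\phi^\sharp=D^\sharp+(D^\sharp)^*$ and conversely. The paper writes out the two implications separately using $\sharp$ in one direction and $\flat$ in the other, but the content is the same as your single-bijection formulation.
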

\begin{proof}
Assume that $g$ is \rigid. Let $\phi\colon\g\to\g^*$ be ad-invariant and symmetric, and write
$\phi^\sharp\colon\g\to\g$ as in Lemma~\ref{lemma:cotangentofadinvariant}; then $\phi^\sharp$ is ad-invariant and self-adjoint. Therefore, we can write
\[\phi^\sharp=D+D^*, \quad D\in\Der(\g),\]
where $D^*$ is the adjoint relative to the metric $g$.
Defining
\[D^\flat\colon\g\to\g^*, \quad  g(D(X),Y)=(D^\flat X)(Y) ,\quad X,Y\in\mg,\]
and applying Lemma~\ref{lemma:cotangentofadinvariant} again, we see that $D^\flat\colon\g\to\g^*$ is a derivation and $(D^\flat)^*=(D^*)^\flat$; consequently,
\[\phi=(\phi^\sharp)^\flat = D^\flat + (D^*)^\flat = D^\flat + (D^\flat)^*.\]

Conversely, suppose that $\g$ is \adsolitary and let
$\phi\colon\g\to\g$ be self-adjoint and ad-invariant. Then $\phi^\flat\colon\g\to\g^*$ is ad-invariant and symmetric; therefore,
\[\phi^\flat=D+D^*,\]
where $D\colon\g\to\g^*$ is a derivation, and $\phi=D^\sharp + (D^\sharp)^*$, so $g$ is \rigid.
\end{proof}

As an immediate consequence, we obtain:
\begin{corollary}\label{cor:simplenotTsol}
A semisimple Lie algebra is never $T^*$-\rigid.
\end{corollary}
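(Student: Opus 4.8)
The plan is to reduce the statement to the solitary condition using the equivalence established in Theorem~\ref{thm:rigiffTrig}, and then to invoke the fact that a semisimple Lie algebra is never \rigid. The whole argument rests on two standard structural facts about semisimple Lie algebras, so I expect no genuine obstacle.

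First I would observe that a semisimple Lie algebra $\g$ admits an ad-invariant metric, namely the Killing form $B$, which is nondegenerate precisely because $\g$ is semisimple. This is exactly the hypothesis required to apply Theorem~\ref{thm:rigiffTrig}, which asserts that a Lie algebra equipped with an ad-invariant metric is \rigid if and only if it is \adsolitary. It therefore suffices to prove that $\g$ is not \rigid.

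Next I would show that every derivation of $\g$ is traceless. Since $\g$ is semisimple we have $\Der(\g)=\ad(\g)$, so every derivation is inner, of the form $\ad_X$. The ad-invariance identity~\eqref{eq:adinvdef} states exactly that $\ad_X$ is skew-adjoint with respect to any ad-invariant metric, i.e. $(\ad_X)^*=-\ad_X$; as the adjoint of an endomorphism relative to a nondegenerate symmetric form has the same trace as the endomorphism itself, skew-adjointness forces $\Tr(\ad_X)=0$. Hence all derivations of $\g$ are traceless, and Remark~\ref{remark:tracess_not_rigid} then shows that $\g$ cannot be \rigid.

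Combining the two steps, $\g$ is not \rigid, so by Theorem~\ref{thm:rigiffTrig} it is not \adsolitary; equivalently, $T^*\g$ is not \rigid, which is the claim. (This also recovers the observation already made in Example~\ref{ex:simple} that simple, and more generally semisimple, Lie algebras fail to be \rigid.)
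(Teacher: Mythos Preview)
Your proof is correct and follows essentially the same approach as the paper: use the Killing form as an ad-invariant metric, observe that all derivations are inner and hence traceless so that Remark~\ref{remark:tracess_not_rigid} rules out the \rigid condition, and then apply Theorem~\ref{thm:rigiffTrig} to conclude that $\g$ is not \adsolitary. The only difference is that you spell out explicitly why inner derivations are traceless, which the paper leaves implicit.
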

\begin{proof}
On a semisimple Lie algebra, the Killing form defines an ad-invariant metric which is not \rigid, since derivations are inner and hence traceless (see Remark~\ref{remark:tracess_not_rigid}). So such a Lie algebras is not $T^*$-solitary by Theorem~\ref{thm:rigiffTrig}.
\end{proof}

\begin{remark}
Baum and Kath proved in \cite{BaumKath} that the cotangent of a semisimple Lie algebra possesses ad-invariant metrics which are not related by an automorphism, by using the Malcev–Harish-Chandra theorem.
\end{remark}

The rest of the section will focus on conditions for a Lie algebra $\mg$ (not necessarily admitting ad-invariant metrics) to be $T^*$-\rigid.
We start by introducing a technical lemma.

It is easy to verify that the kernel of every ad-invariant map $\phi\colon\mg\to\mg^*$ is an ideal of $\mg$, so one can consider the quotient Lie algebra $\mg/\ker \phi$.

\begin{lemma}
\label{lemma:liftderivation}
Let $\phi\colon \g\to\g^*$ be ad-invariant and symmetric. If $\g/\ker\phi$ is \adsolitary, then $\phi=D+\tran D$ for some derivation $D\colon\g\to\g^*$.
\end{lemma}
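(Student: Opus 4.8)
The plan is to descend $\phi$ to the quotient $\bar\g=\g/\ker\phi$, invoke the \adsolitary hypothesis there, and then lift the resulting derivation back to $\g$. Write $\pi\colon\g\to\bar\g$ for the projection and set $\kk=\ker\phi$, which is an ideal. First I would record that $\phi$ factors through $\pi$ on \emph{both} sides. On the domain this is clear since $\phi(\kk)=0$. On the codomain, symmetry of $\phi$ forces $\im\phi\subseteq\Ann\kk$: if $Y\in\kk$ then $\phi(Y)=0$, so $\phi(X)(Y)=\phi(Y)(X)=0$ for all $X$. Identifying $\Ann\kk$ with $\bar\g^*$ via the inclusion $\iota\colon\bar\g^*\hookrightarrow\g^*$, $\iota(\xi)=\xi\circ\pi$, we obtain a unique linear map $\bar\phi\colon\bar\g\to\bar\g^*$ with $\phi=\iota\circ\bar\phi\circ\pi$.

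Next I would check that $\bar\phi$ is symmetric and ad-invariant, so that the hypothesis applies. Symmetry is immediate: $\bar\phi(\bar X)(\bar Y)=\phi(X)(Y)=\phi(Y)(X)=\bar\phi(\bar Y)(\bar X)$. For ad-invariance, the point is that $\kk$ being an ideal makes $\iota$ equivariant for the coadjoint actions, i.e. $\ad_X^*\circ\iota=\iota\circ\ad_{\bar X}^*$; combining this with $\phi\circ\ad_X=\ad_X^*\circ\phi$ and the injectivity of $\iota$ yields $\bar\phi\circ\ad_{\bar X}=\ad_{\bar X}^*\circ\bar\phi$. Since $\bar\g$ is \adsolitary, Definition~\ref{def:adsolitary} provides a derivation $\bar D\colon\bar\g\to\bar\g^*$ with $\bar\phi=\bar D+\tran{\bar D}$.

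Finally I would lift $\bar D$ by setting $D=\iota\circ\bar D\circ\pi\colon\g\to\g^*$ and verify the two required properties. That $D$ is a derivation follows because $\pi$ is a Lie algebra homomorphism: $D[X,Y](Z)$ rewrites as $\bar D[\bar X,\bar Y](\bar Z)$, and the derivation identity for $\bar D$ together with $\pi[X,Y]=[\bar X,\bar Y]$ turns this into $DX([Y,Z])-DY([X,Z])$, which is exactly the derivation condition for maps $\g\to\g^*$. For the transpose, from $\tran D(X)(Y)=D(Y)(X)$ and the analogous identity for $\bar D$ one gets $\tran D=\iota\circ\tran{\bar D}\circ\pi$. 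Summing, $D+\tran D=\iota\circ(\bar D+\tran{\bar D})\circ\pi=\iota\circ\bar\phi\circ\pi=\phi$, as required. The argument is essentially bookkeeping with the identifications; the only point genuinely needing care is confirming that $\iota\circ\bar D\circ\pi$ is a derivation of $\g$ itself rather than merely of the quotient, which is precisely where the homomorphism property of $\pi$ and the $\ad^*$-equivariance of $\iota$ do the work.
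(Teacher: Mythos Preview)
Your proof is correct and follows essentially the same route as the paper's: descend $\phi$ through the quotient $\g/\ker\phi$ using that $\im\phi\subset\Ann(\ker\phi)$, apply the \adsolitary hypothesis to get a derivation on the quotient, and lift it back via projection and inclusion. The paper is terser about the ad-invariance of the induced map and the equivariance of the identification $\Ann(\ker\phi)\cong(\g/\ker\phi)^*$, but the argument is the same.
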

\begin{proof}
If $\phi$ is injective, the statement is trivial. Otherwise, let $U=\ker\phi$ and note that $\phi(\mg)\subset\Ann U$, where $\Ann U$ denotes the annihilator of $U$ in $\mg^*$. Define $\psi\colon \g/U\to (\g/U)^*\cong \Ann U$ by the commutativity of the diagram
\[\xymatrix{ \g\ar[r]^\phi\ar[d] & \Ann U\ar[d]^\cong \\ \g/U\ar[r]^\psi & (\g/U)^*}\]
Since $\phi$ is ad-invariant and symmetric, so is $\psi$. Therefore $\psi=\eth+\tran\eth$ with $\eth\colon\g/U\to(\g/U)^*$ a derivation.

Now lift $\eth$ to $D\colon\g\to\g^*$ by composing with projection and inclusion. We have
\[D[X,Y]=\eth([X,Y]+U)=[\eth(X+U),Y]+[X,\eth(Y+U)]=[DX,Y]+[X,DY].\]
In addition
\[\tran D(X)(Y)=D(Y)(X)=\eth(Y+U)(X)=\tran\eth(X+U)(Y).\]
Thus, $\phi=D+\tran D$.
\end{proof}

\begin{proposition}
\label{prop:quotientweak}
Let $\g$ be a Lie algebra over $\K=\R,\C$ which is not $T^*$-solitary. Then there exists an ideal $U\subset\g$ such that $\g/U$ has a nonsolitary ad-invariant metric, and in addition:
\begin{enumerate}[label=(\roman*)]
 \item if $\K=\C$, the metric on $\g/U$ is weakly solitary;
 \item if $\K=\R$, either the metric on $\g/U$ is weakly solitary, or $\g/U$ is the underlying real Lie algebra of a complex Lie algebra with a weakly solitary ad-invariant metric.
\end{enumerate}
\end{proposition}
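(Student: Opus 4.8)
The plan is to realize the required quotient as a \emph{smallest} nonsolitary quotient carrying an ad-invariant metric, and then to extract its weak solitariness from the eigenvalue structure of ad-invariant self-adjoint maps on irreducible Lie algebras (Corollary~\ref{cor:phihasoneortwoeigenvalues}). To set this up, I would first note that since $\g$ is not \adsolitary, Definition~\ref{def:adsolitary} yields an ad-invariant symmetric $\phi\colon\g\to\g^*$ that is not of the form $D+\tran D$; by the contrapositive of Lemma~\ref{lemma:liftderivation} the quotient $\g/\ker\phi$ is not \adsolitary, and it carries the ad-invariant metric induced by $\phi$ (Remark~\ref{rem:notadinv}), which is nonsolitary by Theorem~\ref{thm:rigiffTrig}. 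Thus the family of ideals $U\subset\g$ such that $\g/U$ admits a nonsolitary ad-invariant metric is nonempty, and I would fix one with $\dim\g/U$ minimal, writing $\mh=\g/U$ with a chosen nonsolitary ad-invariant metric $g$.

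Next I would prove that $\mh$ is irreducible: if $\mh=\mh_1\oplus\mh_2$ nontrivially, Proposition~\ref{pro:decomposable} gives ad-invariant metrics on the factors and Proposition~\ref{prop:decomposablesolitary} forces some $\mh_i\cong\mh/\mh_j$ to be nonsolitary, producing a smaller nonsolitary quotient of $\g$ and contradicting minimality. The heart of the argument is then the following consequence of minimality: \emph{every} $\phi\in S_g\setminus D_g$ is invertible. Indeed, a non-invertible such $\phi$ has nonzero kernel, which is an ideal $V$ (kernels of ad-invariant maps are ideals); the associated $\phi^\flat\colon\mh\to\mh^*$ of Lemma~\ref{lemma:cotangentofadinvariant} descends to a bijective ad-invariant symmetric map on $\mh/V$, hence an ad-invariant metric (Remark~\ref{rem:notadinv}), and since $\phi\notin D_g$ means $\phi^\flat$ is not of the form $D+\tran D$, Lemma~\ref{lemma:liftderivation} shows $\mh/V$ is not \adsolitary, i.e.\ nonsolitary by Theorem~\ref{thm:rigiffTrig} --- a smaller nonsolitary quotient, contradicting minimality.

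With invertibility established, I would feed an arbitrary $\phi\in S_g\setminus D_g$ into Corollary~\ref{cor:phihasoneortwoeigenvalues}, writing $\phi=\phi_S+\phi_N$ with $\phi_S,\phi_N\in S_g$. The nilpotent $\phi_N$ is non-invertible unless it vanishes, so by the dichotomy $\phi_N\in D_g$, whence $\phi_S=\phi-\phi_N\in S_g\setminus D_g$. Over $\C$, $\phi_S$ is a multiple of $\id$, so $\phi\in\K\id+D_g$; as this holds for all $\phi$, we obtain $S_g=D_g+\Span{\id}$, i.e.\ $g$ is \weaklyrigid, which is (i). Over $\R$, $\phi_S=a\id+bJ$. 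If every such $\phi_S$ has $b=0$ the same computation gives $S_g=D_g+\Span{\id}$ and $g$ is \weaklyrigid, the first alternative of (ii). Otherwise some $\phi_S$ has $b\neq0$, and then $J=\tfrac1b(\phi_S-a\id)\in S_g$ is an ad-invariant self-adjoint complex structure; by Remark~\ref{rem:complexfromJadinv}, $\mh=\mk^\R$ for a complex Lie algebra $\mk$ with an ad-invariant metric $h$ whose real part is $g$. Here $\mk$ is irreducible (since $\mk^\R$ is) and nonsolitary (Proposition~\ref{prop:solitaryiffrealsolitary}); minimality of $U$ forbids proper nonsolitary quotients of $\mk$ admitting ad-invariant metrics as well, because realifying such a quotient would give a smaller nonsolitary quotient of $\mh$. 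Hence the complex computation above applies verbatim to $(\mk,h)$ and shows $h$ is \weaklyrigid, giving the second alternative of (ii).

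The main obstacle I expect is the bookkeeping in the core dichotomy step: one must be careful that a non-invertible $\phi\in S_g\setminus D_g$ genuinely produces a \emph{nonsolitary} quotient rather than merely one admitting a metric, and this rests on correctly matching ``$\phi\notin D_g$'' with ``$\mh/\ker\phi$ not \adsolitary'' through the $\flat$/$\sharp$ dictionary of Lemma~\ref{lemma:cotangentofadinvariant} together with the one-directional Lemma~\ref{lemma:liftderivation}. The real case adds the further subtlety of isolating the genuinely complex situation and bootstrapping the already-proven complex case to $\mk$; making the reuse of minimality on $\mk$ rigorous (via realification of quotients) is the delicate point.
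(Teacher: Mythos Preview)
Your proposal is correct and follows essentially the same route as the paper's proof: the paper argues by induction on dimension, whereas you package the same descent via a minimality argument, but the key steps coincide (passing to $\g/\ker\phi$ via Lemma~\ref{lemma:liftderivation}, reducing to the irreducible case, invoking Corollary~\ref{cor:phihasoneortwoeigenvalues} for the Jordan decomposition, and handling the real case through Proposition~\ref{prop:solitaryiffrealsolitary} and Remark~\ref{rem:complexfromJadinv}). Your explicit formulation ``every $\phi\in S_g\setminus D_g$ is invertible'' is a clean way to phrase what the paper does implicitly when it says that failure of weak solitariness forces either a nilpotent $\psi\notin D_g$ or a complex structure.
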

\begin{proof}
If $\g$ has dimension one, there is nothing to prove.

For $\g$ of dimension $n>1$, we proceed by induction. Let $\phi\colon\g\to\g^*$ be a symmetric ad-invariant map which cannot be written as $\phi=D+\tran D$.

If $\g$ does not have ad-invariant metrics, $\phi$ has a nontrivial kernel $U$ by Remark~\ref{rem:notadinv}. The quotient $\g/U$ is not $T^*$-solitary by Lemma~\ref{lemma:liftderivation}, so by induction $\g/U$ has a quotient admitting a weakly solitary, nonsolitary ad-invariant metric; since quotients of $\g/U$ are also quotients of $\g$, the statement is proved.

If $\g$ has an ad-invariant metric $g$, this metric cannot be solitary by Theorem~\ref{thm:rigiffTrig}. If $\g$ is reducible, decompose $\g$ into the orthogonal sum of irreducible Lie algebras as $\g=\g_1\oplus\dots \oplus\g_k$; we have that one of the $\g_i$ is nonsolitary, hence not $T^*$-solitary; by the inductive hypothesis, it has a quotient as in the statement, which is also a quotient of $\g$.

Assume now that $\g$ is irreducible. If $g$ is weakly solitary, there is nothing to prove; otherwise, Corollary~\ref{cor:phihasoneortwoeigenvalues} implies that either
\begin{enumerate}
\item there is a nilpotent, ad-invariant self-adjoint $\psi\colon\g\to\g$ which does not take the form $\psi=D+D^*$; or
\item $\K=\R$ and $\g$ has an ad-invariant, self-adjoint complex structure $J$.
\end{enumerate}
In the first case, by Lemma~\ref{lemma:cotangentofadinvariant} we have that $\psi^\flat\colon\g\to\g^*$ is ad-invariant and does not take the form $\psi^\flat=D+\tran D$ for a derivation $D\colon\g\to\g^*$. By Lemma~\ref{lemma:liftderivation}, $\g/\ker \psi$ is not $T^*$-solitary, so by inductive hypothesis it has a quotient as in the statement.

In the second case, $\g$ is the real Lie algebra underlying a complex Lie algebra $\mh$ with an ad-invariant metric. By Proposition~\ref{prop:solitaryiffrealsolitary}, $\mh$ is not solitary, so by the inductive hypothesis it has a quotient $\mh/U$ which is weakly solitary but not solitary. Then $U$ is also an ideal in $\g$, and $\g/U$ is nonsolitary by Proposition~\ref{prop:solitaryiffrealsolitary}.
\end{proof}

Another useful consequence of Lemma~\ref{lemma:liftderivation} is the following:
\begin{proposition}
\label{prop:impliesadsolitary}
Let $\g=\bigoplus\mb_i$ be a Lie algebra graded over $\Z$. If either:
\begin{enumerate}
 \item the indices $i$ are positive; or
 \item the indices $i$ are nonnegative and $\mb_0$ is \adsolitary,
\end{enumerate}
then $\g$ is \adsolitary.
\end{proposition}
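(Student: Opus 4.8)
The plan is to reduce, via Corollary~\ref{cor:cotangentunique}, to showing that the canonical metric $g$ on $T^*\g=\g\ltimes\g^*$ is solitary, and then to apply the graded criterion of Lemma~\ref{lemma:uniquenesslemma}. The grading I would put on $T^*\g$ combines the given grading on $\g$ with the dual grading on $\g^*$: assign $\mb_i\subset\g$ the degree $i$, and assign $\mb_i^*\subset\g^*$ (the annihilator of $\bigoplus_{j\neq i}\mb_j$, identified with $(\mb_i)^*$) the degree $l-i$, where $l$ is a fixed integer chosen \emph{strictly larger than every index $m$ occurring in $\g$}. A direct check shows this is a Lie algebra grading (the only nonabelian brackets are $[\mb_i,\mb_j]\subseteq\mb_{i+j}$ and $[\mb_i,\mb_j^*]\subseteq\mb_{j-i}^*$, whose degrees add up correctly), and that the canonical metric satisfies~\eqref{eqn:co_gradation} for this $l$, since the only nonzero pairings are between $\mb_i$ and $\mb_i^*$, whose degrees sum to $l$.

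The purpose of taking $l$ large is to force degree-$l$ endomorphisms of $T^*\g$ into a trivial shape. A degree-$l$ map sends $\mb_i\to\mb_{i+l}$, $\mb_i\to\mb_{-i}^*$, $\mb_i^*\to\mb_{2l-i}$ and $\mb_i^*\to\mb_{i-l}^*$; since all indices lie in $[0,m]$ with $m<l$, the first, third and fourth targets fall outside the grading and vanish, while the second survives only for $i=0$. Hence every degree-$l$ endomorphism of $T^*\g$ is zero on $\g^*$, has image in $\g^*$, and is supported on the single block $\mb_0\to\mb_0^*$. In Case~1 (positive indices) one has $\mb_0=0$, so every degree-$l$ ad-invariant self-adjoint map is zero; it is trivially of the form $D+D^*$, and Lemma~\ref{lemma:uniquenesslemma} immediately yields that $g$ is solitary.

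For Case~2 I would argue as follows. A degree-$l$ ad-invariant self-adjoint map $\phi$ restricts to a map $\phi_0\colon\mb_0\to\mb_0^*$ which, since $\mb_0$ is a subalgebra (Remark~\ref{rem:nilpgrad}) and $\mb_0^*$ annihilates $\bigoplus_{i>0}\mb_i$, is ad-invariant and symmetric \emph{for the Lie algebra $\mb_0$}. As $\mb_0$ is $T^*$-solitary, I can write $\phi_0=\eth+\tran\eth$ for a derivation $\eth\colon\mb_0\to\mb_0^*$. I then lift $\eth$ to $D\colon\g\to\g^*$ by setting $D=\eth$ on $\mb_0$ and $D=0$ on $\bigoplus_{i>0}\mb_i$; this $D$ has degree $l$, and I claim it is a derivation of $\g$ with $D+\tran D=\phi$. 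Checking $D[X,Y]=[DX,Y]+[X,DY]$ on homogeneous $X\in\mb_a$, $Y\in\mb_b$ is routine except that it relies crucially on nonnegativity of the grading: the only potentially obstructing term is $[\eth X,Y]$ with $X\in\mb_0$ and $Y\in\mb_b$, $b>0$, and this equals $\eth X\circ\ad_Y$, a functional supported on $\mb_{-b}$, which vanishes because $-b<0$ is not an index of the grading. With $D$ in hand, the hypothesis of Lemma~\ref{lemma:uniquenesslemma} holds, so $g$ is solitary and $\g$ is $T^*$-solitary by Corollary~\ref{cor:cotangentunique}.

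The main obstacle I anticipate is precisely the interplay between the choice of $l$ and these two vanishing mechanisms: choosing $l$ larger than the top degree of $\g$ is what collapses the four a priori components of a degree-$l$ map down to the block $\mb_0\to\mb_0^*$, while nonnegativity of the grading is what turns the naive lift of $\eth$ into an honest derivation of all of $\g$ rather than merely of $\mb_0$. Were the grading allowed to have negative parts, the block $\mb_0\to\mb_0^*$ would receive contributions from all pairs $\mb_i\to\mb_{-i}^*$ and the lift would break down, which is exactly why the hypotheses restrict to positive or nonnegative indices.
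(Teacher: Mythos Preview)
Your proposal is correct and follows essentially the same route as the paper: grade $T^*\g$ by giving $\mb_i$ degree $i$ and $\mb_i^*$ degree $l-i$, identify degree-$l$ self-adjoint ad-invariant maps with symmetric ad-invariant maps $\mb_0\to\mb_0^*$, and invoke Lemma~\ref{lemma:uniquenesslemma} together with Corollary~\ref{cor:cotangentunique}. The only differences are cosmetic: the paper takes $l>2i$ for every index $i$ (so the $\g$- and $\g^*$-parts of the grading do not overlap), whereas you take $l$ merely larger than the top index, which still suffices for the degree-$l$ analysis; and you spell out explicitly why the zero-extension of $\eth$ is a derivation of $T^*\g$, a step the paper leaves implicit.
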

\begin{proof}
Let $l$ be an integer which is greater than $2i$ for every index $i$ of the grading.

The splitting $\g=\bigoplus\mb_i$ induces a splitting $\g^*=\bigoplus\mb_i^*$, and \[\g\ltimes \g^*=\bigoplus \mb_i \oplus \bigoplus\mb_i^*.\] We can make this direct sum into a gradation by assigning degree $l-i$ to $\mb_i^*$. This is a grading because when $X$ is in $\mb_i$, then $\ad_X(\mb_j)\subset \mb_{i+j}$, so $\ad^*_X(\mb_k^*)\subset \mb_{k-i}^*$, and $\g^*$ is abelian. Also note that the canonical ad-invariant metric~\eqref{eqn:CanonicalCotangentMetric} trivially satisfies~\eqref{eqn:co_gradation}.

By construction, if $i$ is an index of $\g=\bigoplus\mb_i$, then $l+i$ is only an index if $i=0$, and $(l-i)+l>l$ is not an index. Thus, the degree $l$ component of $\phi$ is an ad-invariant symmetric map $\phi_l\colon\mb_0\to\mb_0^*$, and since $\mb_0$ is \adsolitary we can apply Lemma~\ref{lemma:uniquenesslemma} to have that $T^*\g$ is \rigid. Finally, we apply Corollary~\ref{cor:cotangentunique} to conclude.
\end{proof}

\begin{corollary}
\label{cor:2stepimpliesadsolitary}
Every $2$-step nilpotent Lie algebra is $T^*$-solitary.
\end{corollary}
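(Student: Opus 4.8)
The plan is to exhibit a positive grading of $\g$ and then invoke Proposition~\ref{prop:impliesadsolitary}. Recall that a Lie algebra is $2$-step nilpotent precisely when its commutator $\g'$ is contained in the center $\mz(\g)$, so that $[\g',\g]=0$. First I would fix an arbitrary vector space complement $\mv$ of $\g'$ in $\g$, obtaining $\g=\mv\oplus\g'$, and declare $\mb_1=\mv$ and $\mb_2=\g'$.

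Next I would verify that $\g=\mb_1\oplus\mb_2$ is a grading over $\Z$ in the sense of~\eqref{eq:gradb}. Since $\g$ is $2$-step nilpotent, we have $[\mb_1,\mb_1]=[\mv,\mv]\subseteq\g'=\mb_2$, matching the required degree $1+1=2$; moreover $[\mb_1,\mb_2]=[\mv,\g']=0\subseteq\mb_3$ and $[\mb_2,\mb_2]=[\g',\g']=0\subseteq\mb_4$, both holding trivially because $\g'$ is central. I would stress the point that $\mv$ need not be a subalgebra: a grading is only a decomposition of the underlying vector space compatible with the bracket, so no closure condition on $\mv$ is needed. As the only indices appearing are $1$ and $2$, which are positive, the first case of Proposition~\ref{prop:impliesadsolitary} applies directly and yields that $\g$ is $T^*$-solitary.

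I expect that there is no genuine obstacle here: the entire content is the observation that centrality of $\g'$ turns the two-term decomposition $\g=\mv\oplus\g'$ into a positively graded Lie algebra, after which the previously established Proposition~\ref{prop:impliesadsolitary} does all the work. The only point requiring minimal care is the degree bookkeeping—in particular confirming that the bracket of two degree-one elements lands in degree $2$, which is exactly guaranteed by $[\g,\g]\subseteq\g'$. The degenerate abelian case $\g'=0$ is covered by the same argument with $\mb_2=0$.
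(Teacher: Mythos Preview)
Your proposal is correct and follows essentially the same approach as the paper: both exhibit the positive grading $\g=\mb_1\oplus\mb_2$ with $\mb_2=\g'$ and $\mb_1$ a complement, then apply Proposition~\ref{prop:impliesadsolitary}. Your version simply spells out the degree checks and the abelian edge case in more detail than the paper's two-line proof.
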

\begin{proof}
Since the commutator is contained in the center, we have a positive grading $\mb_1\oplus\mb_2$, with $\mb_2$ equal to the commutator. Proposition~\ref{prop:impliesadsolitary} implies the statement.
\end{proof}

Recall that a Riemannian nilsoliton is a nilpotent Lie algebra with a Riemannian metric satisfying $\ric=\lambda \id+D$ for some derivation $D$. It is well known that the derivation $D$ is then a multiple of the Nikolayevsky derivation; in fact, a nilpotent Lie algebra is a nilsoliton if and only the Nikolayevsky derivation has positive eigenvalues. By considering the grading of $\mg$ induced by this derivation, Proposition~\ref{prop:impliesadsolitary} immediately implies:
\begin{corollary}
\label{cor:nilsolitonsareadsolitary}
Riemannian nilsolitons are $T^*$-solitary.
\end{corollary}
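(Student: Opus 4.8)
The plan is to produce a grading of $\mg$ with strictly positive indices and then quote case~(1) of Proposition~\ref{prop:impliesadsolitary}, observing at the outset that the $T^*$-solitary condition is a property of $\mg$ alone: it refers only to the canonical ad-invariant metric on $T^*\mg$ and makes no reference to the Riemannian metric defining the nilsoliton (which is positive definite and in general not ad-invariant). Thus the only role played by the nilsoliton hypothesis is to guarantee positivity of the relevant eigenvalues.

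First I would invoke the characterization recalled just above the statement: since $(\mg,g)$ is a Riemannian nilsoliton, its Nikolayevsky derivation $\tilde N$ is semisimple with \emph{positive} eigenvalues. Writing $\mg=\bigoplus_\lambda V_\lambda$ with $V_\lambda=\ker(\tilde N-\lambda\id)$, each $\lambda>0$, the fact that $\tilde N$ is a derivation gives $[V_\lambda,V_\mu]\subseteq V_{\lambda+\mu}$, because for $x\in V_\lambda$, $y\in V_\mu$ one has $\tilde N[x,y]=[\tilde Nx,y]+[x,\tilde Ny]=(\lambda+\mu)[x,y]$. Hence the eigenspace decomposition of $\tilde N$ is a grading of $\mg$ in the sense of~\eqref{eq:gradb}.

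Next I would arrange the indices to be positive integers. By Nikolayevsky's theorem \cite{Nikolayevsky} the eigenvalues $\lambda$ are rational, so replacing $\tilde N$ by a suitable positive integer multiple (a common denominator) yields a semisimple derivation with the same eigenspaces but positive integer eigenvalues. Relabelling the $V_\lambda$ as $\mb_i$ according to these integer eigenvalues produces a grading $\mg=\bigoplus_i\mb_i$ over $\Z$ in which every index $i$ is strictly positive. At this point case~(1) of Proposition~\ref{prop:impliesadsolitary} applies verbatim and shows that $\mg$ is \adsolitary, which by Corollary~\ref{cor:cotangentunique} is exactly the assertion that $T^*\mg$ is \rigid.

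I do not expect a genuine obstacle, since all the substantive content is already packaged in the cited results; the statement is essentially a corollary of Proposition~\ref{prop:impliesadsolitary}. The only point deserving a moment's care is the transition from the a priori positive \emph{rational} eigenvalues of $\tilde N$ to an honest $\Z$-grading, which is handled by the rationality statement together with the harmless rescaling of the derivation noted above.
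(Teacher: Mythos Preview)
Your proposal is correct and follows exactly the paper's approach: use the positivity of the eigenvalues of the Nikolayevsky derivation to obtain a positive grading of $\mg$, then invoke case~(1) of Proposition~\ref{prop:impliesadsolitary}. You have simply spelled out details (the derivation property giving $[V_\lambda,V_\mu]\subseteq V_{\lambda+\mu}$, and the rescaling from rational to integer eigenvalues) that the paper leaves implicit in its one-line proof.
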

It is worth noting that all nilpotent Lie algebras of dimension $\leq 6$  are Riemannian nilsolitons (see \cite{Will:RankOne}). However, Proposition~\ref{prop:impliesadsolitary} applies more generally:
\begin{example}
Consider the $11$-dimensional nilpotent Lie algebra
\[(0,0,0,0,0,0,0,e^{12},e^{13}+e^{24}+e^{57}+e^{68},e^{19}+e^{34}+e^{56}+e^{78})\]
This is a nice Lie algebra whose Nikolayevsky derivation is
\[\diag\left(0,\frac76,\frac76,0,\frac7{12},\frac7{12},\frac7{12},\frac7{12},\frac76,\frac76,\frac76\right).\]
Therefore, it admits a nonnegative grading with $\mb_0$ abelian, and is \adsolitary by Proposition~\ref{prop:impliesadsolitary}.
\end{example}
The converse of Proposition~\ref{prop:impliesadsolitary} does not hold, as shown by the following examples.
\begin{example}
\label{ex:123457E}
Consider  the nilpotent Lie algebra
\[(0,0,e^{12},e^{13},e^{14},e^{23}+e^{15},e^{23}+e^{24}+e^{16}).\]
This $7$-dimensional Lie algebra is characteristically nilpotent. Any such Lie algebra does not admit nontrivial gradings; indeed, a grading $\g=\bigoplus_t\mb_t$ induces a derivation $\sum t\id_{\mb_t}$, which is only nilpotent if the grading takes the form $\g=\mb_0$.

Thus, Proposition~\ref{prop:impliesadsolitary} does not apply to this case. However, we
can check that this Lie algebra is \adsolitary by by observing that ad-invariant symmetric maps $\g\to\g^*$ are zero on $\g'$, so Remark~\ref{rk:WWadsolitary} applies.
A similar argument applies to the $7$-dimensional characteristically nilpotent Lie algebra
 $(0,0,e^{12},e^{13},0,e^{25}+e^{14}+e^{23},-e^{34}+e^{26}+e^{15})$.
\end{example}

\begin{example}\label{ex:negativeeigenvalue}
The $9$-dimensional nice nilpotent Lie algebra
\[(0,0,0,e^{12},e^{13},e^{23},e^{24},e^{35}+e^{26},e^{27}+e^{15})\]
has Nikolayevsky derivation
\[\diag\left(
\frac{1}{7},0,-\frac{1}{7},\frac{1}{7},0,-\frac{1}{7},\frac{1}{7},-\frac{1}{7},\frac{1}{7}\right);\] moreover, every nonzero semisimple derivation has both positive and negative eigenvalues, so there is no nontrivial gradation with nonnegative indices.

We can prove that this Lie algebra is $T^*$-solitary by observing that all ad-invariant symmetric maps $\g\to\g^*$ vanish on $\Span{e_7,e_8,e_9}$, and applying Lemma~\ref{lemma:liftderivation}. Indeed, the quotient $\g/\Span{e_7,e_8,e_9}$ is two-step nilpotent, hence $T^*$-solitary by Corollary~\ref{cor:2stepimpliesadsolitary}.
\end{example}

\subsection{The Nikolayevsky derivation on $T^*\g$}
For completeness, we give an explicit formula for the Nikolayevsky derivation of $T^*\g$ in terms of the Nikolayevsky derivation of $\g$.

Let us first describe the structure of $\Der(\mg\ltimes\mg^*)$.
Let $D\in \gl(\mg\ltimes\mg^*)$ and consider the induced linear maps
\begin{equation}\label{eq:derivcot}
D=
\begin{pmatrix}
D_1&D_3\\
D_2&D_4
\end{pmatrix}.
\end{equation}
\begin{lemma}\label{lm:D1234}
$D$ is a derivation of $\mg\ltimes\mg^*$ if and only if for every $X,Y\in \mg$ and $\alpha, \beta\in \g^*$ the following conditions hold:
\begin{enumerate}[label=(\roman*)]
\item $D_1\in \Der(\mg)$ and $[D_4,\ad_X^*]=\ad_{D_1X}^*$;
\item $D_2[X,Y]=\ad_X^*(D_2Y) -\ad_Y^*(D_2X)$;
\item $D_3\circ\ad^*_X=\ad_ X\circ D_3$ and $\ad^*_{D_3\alpha}(\beta)=\ad^*_{D_3\beta}(\alpha)$.
\end{enumerate}
In particular, derivations of degree zero with respect to~\eqref{eqn:grading_of_cotangent} have the form
\[
\begin{pmatrix}
D_1&0\\
0&-\tran D_1+\tran\phi
\end{pmatrix}.\]
where $D_1\colon\g\to\g$ is a derivation and $\phi\colon\g\to\g$ is ad-invariant.
\end{lemma}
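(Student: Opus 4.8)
The plan is to verify the Leibniz identity $D[u,v]=[Du,v]+[u,Dv]$ on the three types of pairs $(u,v)$ dictated by the splitting $\mg\ltimes\mg^*$, and to read off the block equations by projecting onto $\mg$ and $\mg^*$. Since the derivation condition is bilinear in $(u,v)$, it suffices to test it with $u,v$ ranging independently over $\mg$ and $\mg^*$, which produces exactly three cases. Throughout I would use the bracket rules $[X,Y]\in\mg$, $[X,\alpha]=\ad_X^*\alpha\in\mg^*$, $[\alpha,X]=-\ad_X^*\alpha$, and $[\alpha,\beta]=0$, together with $(\ad_X^*\alpha)(Y)=-\alpha([X,Y])$.

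First I would take $u=X$, $v=Y$ in $\mg$. Writing $DX=D_1X+D_2X$ and expanding, the $\mg$-component of the Leibniz identity gives $D_1\in\Der(\mg)$, while the $\mg^*$-component gives condition (ii), namely $D_2[X,Y]=\ad_X^*(D_2Y)-\ad_Y^*(D_2X)$. Next, with $u=X\in\mg$ and $v=\alpha\in\mg^*$, the $\mg$-part yields $D_3\circ\ad_X^*=\ad_X\circ D_3$ (the first half of (iii)), and the $\mg^*$-part yields $D_4\ad_X^*\alpha=\ad_{D_1X}^*\alpha+\ad_X^*D_4\alpha$, that is $[D_4,\ad_X^*]=\ad_{D_1X}^*$ (the second half of (i)). Finally, with $u=\alpha$, $v=\beta$ in $\mg^*$ the left side vanishes and the surviving terms give $\ad_{D_3\alpha}^*(\beta)=\ad_{D_3\beta}^*(\alpha)$ (the symmetry half of (iii)). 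Conversely, these equations are precisely the components of the Leibniz identity on each pair, so together they are equivalent to $D\in\Der(\mg\ltimes\mg^*)$. The only real care needed here is consistent sign bookkeeping for $[\alpha,X]=-\ad_X^*\alpha$.

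For the degree-zero statement, preserving the grading~\eqref{eqn:grading_of_cotangent} means $D(\mg)\subseteq\mg$ and $D(\mg^*)\subseteq\mg^*$, i.e. $D_2=D_3=0$; then (ii) and (iii) hold trivially and only (i) survives: $D_1\in\Der(\mg)$ and $[D_4,\ad_X^*]=\ad_{D_1X}^*$. I would first exhibit $D_4=-\tran{D_1}$ as a particular solution: a direct computation using $(\ad_X^*\alpha)(Y)=-\alpha([X,Y])$ and $D_1[X,Y]=[D_1X,Y]+[X,D_1Y]$ gives $[-\tran{D_1},\ad_X^*]=\ad_{D_1X}^*$. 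Hence the general $D_4$ has the form $-\tran{D_1}+D_4'$, where $D_4'$ commutes with every $\ad_X^*$. Writing the arbitrary map $D_4'\colon\mg^*\to\mg^*$ as $\tran\phi$ for a unique $\phi\colon\mg\to\mg$ (transposition is a bijection of $\gl(\mg)$ onto $\gl(\mg^*)$), the commutation $[\tran\phi,\ad_X^*]=0$ translates, after pairing with $Y\in\mg$, into $\phi[X,Y]=[X,\phi Y]$, i.e. $\phi$ is ad-invariant. This yields $D_4=-\tran{D_1}+\tran\phi$ with $\phi$ ad-invariant, as claimed.

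The main obstacle is this last step: identifying the homogeneous solution space of maps $D_4'$ with $[D_4',\ad_X^*]=0$ with the transposes of ad-invariant endomorphisms. The key observation that makes it painless is that $-\tran{D_1}$ already solves the inhomogeneous equation, reducing the problem to computing a kernel, and that under transposition the relation ``$\tran\phi$ commutes with all $\ad_X^*$'' is exactly the ad-invariance $\phi\circ\ad_X=\ad_X\circ\phi$. Everything else is routine sign-tracking across the three cases.
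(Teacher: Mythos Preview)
Your proof is correct and follows essentially the same approach as the paper: both split the Leibniz identity into the three cases $(X,Y)$, $(X,\alpha)$, $(\alpha,\beta)$ and project onto the $\mg$- and $\mg^*$-components, and for the degree-zero part both observe that $-\tran{D_1}$ is a particular solution of $[D_4,\ad_X^*]=\ad_{D_1X}^*$ so that the general solution differs by the transpose of an ad-invariant endomorphism. Your write-up is slightly more explicit about the converse direction, but the argument is the same.
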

\begin{proof}
Suppose that $D\in \Der(T^*\mg)$, that is
\begin{equation}
\label{eq:duv}
D[Z,W]=[DZ,W]+[Z,DW],\quad \text{ for all }Z,W\in T^*\mg.
\end{equation}

Taking $Z=X, W=Y\in \mg$ in~\eqref{eq:duv}, and projecting the equality onto $\mg$ and $\mg^*$, we get that $D_1\in \Der(\mg)$ and
$D_2[X,Y]=(D_2Y)\circ \ad_X-(D_2X)\circ \ad_Y$.
Moreover, putting $Z=X\in \mg$ and $W=\alpha\in \mg^*$ in the same equation we get $D_3\circ \ad_X^*=\ad_X\circ D_3$ and $[D_4,\ad_X^*]=\ad_{D_1X}^*$. Finally, taking $Z=\alpha,W=\beta\in \mg^*$ we obtain
$\beta\circ \ad_{D_3\alpha}=\alpha\circ \ad_{D_3\beta}$.

Now let $D$ be a degree zero derivation, that is, with $D_2=D_3=0$. Then, $D_1$ is a derivation and thus $[\tran D_1,\ad_X^*]=-\ad^*_{D_1X}$, for every $X\in\mg$. Moreover, $[D_4,\ad_X^*]=\ad_{D_1X}^*$, therefore
\[0=[D_4+\tran D_1,\ad_X^*],\]
which holds only if $\tran D_4+ D_1=:\phi$ is ad-invariant.
\end{proof}

Recall that the projection $P\colon\g\ltimes\g^*\to\g\ltimes\g^*$ onto $\g^*$ is a derivation, whose eigenspaces induce the grading~\eqref{eqn:grading_of_cotangent}.
\begin{proposition}
\label{prop:cotangentnik}
Let $\mg$ be a Lie algebra with Nikolayevsky derivation $\tilde N$; on the cotangent $\mg\ltimes\mg^*$, define $N=a(\tilde N-\tilde N^*+2P)$, where $a=\dim\mg/(2\dim\mg-\Tr \tilde N)$ satisfies $0<a<1$. Then $N$ is both a Nikolayevsky derivation and a metric Nikolayevsky derivation relative to the canonical metric.
\end{proposition}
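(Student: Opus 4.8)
The plan is to write $N$ in block form with respect to the splitting $T^*\g=\g\oplus\g^{*}$ and check the two defining properties directly. First I would record that for a map $f=\left(\begin{smallmatrix}f_1 & f_3\\ f_2& f_4\end{smallmatrix}\right)$ written in the blocks of Lemma~\ref{lm:D1234}, the canonical metric (which interchanges $\g$ and $\g^{*}$) gives adjoint $f^{*}=\left(\begin{smallmatrix}\tran{f_4} & \tran{f_3}\\ \tran{f_2}& \tran{f_1}\end{smallmatrix}\right)$. Interpreting $\tilde N$ in the statement as the extension by zero of the Nikolayevsky derivation to $T^*\g$, this yields $\tilde N^{*}=\left(\begin{smallmatrix}0 & 0\\ 0& \tran{\tilde N}\end{smallmatrix}\right)$, and together with $P=\left(\begin{smallmatrix}0 & 0\\ 0& \id\end{smallmatrix}\right)$ one obtains
\[N=a(\tilde N-\tilde N^{*}+2P)=a\begin{pmatrix}\tilde N & 0\\ 0& 2\id-\tran{\tilde N}\end{pmatrix}.\]
Since the $\g^{*}$-block equals $-\tran{(a\tilde N)}+\tran{(2a\,\id)}$ with $a\tilde N\in\Der\g$ and $2a\,\id$ ad-invariant, Lemma~\ref{lm:D1234} shows that $N$ is a degree-zero derivation; it is semisimple because $\tilde N$ is, and a direct computation gives $N+N^{*}=2a\,\id$, so $N\in\co(T^*\g,g)\cap\Der(T^*\g)$. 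Finally $0<a<1$ for $\g$ non-abelian (the abelian case, where $T^*\g$ is abelian, being trivial): using $\Tr(\tilde N^{2})=\Tr\tilde N\ge 0$ and the Cauchy--Schwarz inequality $(\Tr\tilde N)^{2}\le n\,\Tr(\tilde N^{2})=n\,\Tr\tilde N$ one gets $\Tr\tilde N\le n$, with equality forcing $\tilde N$ to be a multiple of $\id$, hence zero.

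The heart of the matter is a trace identity. For $\psi=\left(\begin{smallmatrix}\psi_1 & \psi_3\\ \psi_2& \psi_4\end{smallmatrix}\right)\in\Der(T^*\g)$ only the diagonal blocks contribute to $\Tr(N\psi)$ and to $\Tr\psi$, so $\psi_2,\psi_3$ are irrelevant. Transposing the relation $[\psi_4,\ad_X^{*}]=\ad_{\psi_1X}^{*}$ of Lemma~\ref{lm:D1234}(i) and using $\ad_X^{*}=-\tran{\ad_X}$ together with $[\psi_1,\ad_X]=\ad_{\psi_1X}$, I would show that $\theta:=-\tran{\psi_4}-\psi_1$ satisfies $[\theta,\ad_X]=0$, i.e. $\theta$ is ad-invariant. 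Hence $\tran{\psi_4}=-\psi_1-\theta$; combining this with $\Tr(\tilde N\psi_1)=\Tr\psi_1$ (the defining property of $\tilde N$, since $\psi_1\in\Der\g$) and $\Tr(\tran{\tilde N}\psi_4)=\Tr(\tilde N\tran{\psi_4})$ gives, after simplification,
\[\Tr(N\psi)-\Tr\psi=(1-2a)\Tr\theta+a\Tr(\tilde N\theta).\]

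For the metric Nikolayevsky property it suffices to take $\psi\in\co(T^*\g,g)\cap\Der(T^*\g)$, so that $\psi+\psi^{*}=\mu\,\id$ for some scalar $\mu$. Reading off the $\g$-block gives $\psi_1+\tran{\psi_4}=\mu\,\id$, whence $\theta=-\mu\,\id$. Substituting $\Tr\theta=-\mu n$ and $\Tr(\tilde N\theta)=-\mu\Tr\tilde N$ into the displayed formula produces $-\mu\bigl(n-a(2n-\Tr\tilde N)\bigr)$, which vanishes exactly because $a(2n-\Tr\tilde N)=n$. Thus $\Tr(N\psi)=\Tr\psi$ on $\co\cap\Der$, and since $N$ is a semisimple element of $\co(T^*\g,g)\cap\Der(T^*\g)$ satisfying~\eqref{eqn:definingNik}, Theorem~\ref{thm:metricnik} identifies it as a metric Nikolayevsky derivation.

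For the ordinary Nikolayevsky property I need the displayed formula to vanish for \emph{every} $\psi\in\Der(T^*\g)$, i.e. $a\Tr(\tilde N\theta)=(2a-1)\Tr\theta$ for all ad-invariant $\theta$; by the value of $a$ this is the trace identity $n\,\Tr(\tilde N\theta)=\Tr(\tilde N)\,\Tr\theta$. This is the step I expect to be the main obstacle, and it is genuinely stronger than the metric case, where $\theta$ was forced to be scalar. The approach is to use that the generalized eigenspaces of an ad-invariant map are ideals (Proposition~\ref{prop:eigenvaluesofphi}), which for $\g$ irreducible forces $\theta$ to have a single eigenvalue and hence $\theta=\lambda\,\id+\theta_N$ with $\theta_N$ ad-invariant nilpotent; the identity then reduces to $\Tr(\tilde N\theta_N)=0$. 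To see this I would pass to the $\tilde N$-degree-zero part $(\theta_N)_0$, which alone contributes to $\Tr(\tilde N\theta_N)$ and is again ad-invariant (Proposition~\ref{prop:thegradedpart}); as $\Tr(\theta_N)_0=\Tr\theta_N=0$ and its eigenspaces are ideals, $(\theta_N)_0$ is nilpotent and commutes with $\tilde N$, so $(\theta_N)_0\tilde N$ is a nilpotent derivation (Lemma~\ref{lemma:derivationcomposedadinvariant}) of zero trace, giving $\Tr(\tilde N\theta_N)=0$. With the trace identity established the displayed formula vanishes for all $\psi$, and $N$, being semisimple, is the Nikolayevsky derivation of $T^*\g$.
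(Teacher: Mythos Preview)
Your approach is essentially the paper's, repackaged in block-matrix form: both reduce the trace identity to degree-zero derivations, both extract the ad-invariant component (your $\theta$, the paper's $\lambda\id+\phi$ with $\phi$ traceless), and both finish by passing to the degree-zero piece in the $\tilde N$-grading and invoking irreducibility. Your Cauchy--Schwarz argument for $a<1$ is slightly cleaner than the paper's appeal to the positivity of $FD^{-1}\tran F$.

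There is one small gap over $\R$. The claim ``for $\g$ irreducible, $\theta$ has a single eigenvalue, hence $\theta=\lambda\id+\theta_N$'' overlooks the possibility $\theta_S=a\id+bJ$ from Corollary~\ref{cor:phihasoneortwoeigenvalues}; and ``$(\theta_N)_0$ is nilpotent'' can likewise fail if the degree-zero part happens to acquire a pair of purely imaginary eigenvalues (the degree-zero part of a nilpotent map need not be nilpotent). The paper sidesteps this by working directly with the traceless ad-invariant $\phi$: on irreducible $\g$, Proposition~\ref{prop:eigenvaluesofphi} forces $\phi_0$ to have either only the eigenvalue $0$ or a pair of purely imaginary eigenvalues; since each restriction $\phi_0|_{\mb_t}$ is a real operator, complex eigenvalues pair up and $\Tr(\phi_0|_{\mb_t})=0$ in either case, giving $\Tr(\tilde N\phi)=\sum_t t\,\Tr(\phi_0|_{\mb_t})=0$. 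Replacing your ``nilpotent'' step by this observation closes the gap with no change to the rest of your argument.
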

\begin{proof}
Without loss of generality, we can assume that $\mg$ is irreducible.

Fix the grading~\eqref{eqn:grading_of_cotangent} on $\mg\ltimes \mg^*$. The derivation $N$ has degree zero. Every derivation $f$ of $\mg\ltimes \mg^*$ can be decomposed as the sum of graded derivations $f_{-1}+f_0+f_1$. It is clear that $\Tr(f_{\pm1})=0=\Tr(N f_{\pm1})$, so we only need to verify that $\Tr (f_0)=\Tr(N f_0)$. By Lemma~\ref{lm:D1234}, we can write $f_0=D_1-D_1^*+\lambda P + \phi^*$, where $D_1$ is a derivation of $\mg$ and $\phi\colon\g\to\g$ is ad-invariant and traceless.

We compute
\begin{gather*}
\Tr (ND_1)=a\Tr(\tilde ND_1)=a\Tr D_1;\\
\Tr (ND_1^*)=a\Tr(-\tilde N^* D_1^*+2D_1^*)=a\Tr D_1;\\
\Tr (NP)=a\Tr(-\tilde N^*P+2P)=a(-\Tr \tilde N+2\dim\g)=\dim\g=\Tr P.
\end{gather*}
It remains to show that
\[0=\Tr(N\phi^*)=a\Tr (-\tilde N^*\phi^*+2\phi^*)=-a\Tr(N\phi).
\]
To this end, let $\mg=\bigoplus \mb_t$ be the grading defined by the Nikolayevsky derivation $\tilde N$. Then, decomposing $\phi=\sum_k \phi_k$ according to degree, we have that $\Tr(N\phi_k)$ is zero whenever $k$ is nonzero. The zero-degree component $\phi_0$ is ad-invariant and traceless; since $\mg$ is irreducible, Proposition~\ref{prop:eigenvaluesofphi} implies that either $\phi_0$ only has $0$ as an eigenvalue, or it has  purely imaginary eigenvalues. Since $\phi_0$ preserves each $\mb_t$, this also applies to each restriction $\phi_0|_{\mb_t}\colon\mb_{t}\to\mb_t$. Therefore, \[\Tr(N\phi)=\Tr(N\phi_0)=\sum_t t \Tr \phi_0|_{\mb_t}=0.\]
This shows that $N$ is a Nikolayevsky derivation. Since $\tilde N-\tilde N^*$ is skew-symmetric and $P^*=\id-P$, it also lies in $\co$, so it is a metric Nikolayevsky derivation.

Arguing as in the proof of \cite[Theorem 1]{Nikolayevsky} and Theorem~\ref{thm:metricnik}, one  sees that the Nikolayevsky derivation $N$ corresponds to a vector $H$ such that $\langle[1],\tran FH\rangle = \sum d_i(\lambda_i-1)=\Tr N-\dim\mg$;
in addition, the matrix $FD^{-1}\tran F$ is positive-definite, so
$H=(FD^{-1}\tran F)^{-1}[1]$ satisfies
\[0<\langle [1],H\rangle =\langle -F[1],H\rangle  = \langle [1],\tran F H\rangle=\Tr N-\dim \g;\]
this implies both $a>0$ and $a<1$.
\end{proof}

Since both $\tilde N^*$ and $P$ are obtained by extending maps $\g^*\to\g^*$ to zero, we deduce the following:
\begin{corollary}
\label{cor:cotangentnegative}
Let $\g$ be a Lie algebra with Nikolayevsky derivation $\tilde N$ and let $N$ denote the metric Nikolayevsky derivation of the cotangent $\g\ltimes\g^*$ relative to the canonical ad-invariant metric.
If $\tilde N$ has zero as an eigenvalue (respectively, a negative eigenvalue), then $N$ has  zero as an eigenvalue (respectively, has a negative eigenvalue).
\end{corollary}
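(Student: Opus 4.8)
The plan is to read the conclusion directly off the explicit formula for $N$ established in Proposition~\ref{prop:cotangentnik}, namely $N=a(\tilde N-\tilde N^*+2P)$ with $0<a<1$. The crucial point, already emphasized in the remark preceding the statement, is that both $\tilde N^*$ and $P$ are extensions by zero of endomorphisms of $\g^*$; hence they vanish identically on the subspace $\g\subset\g\ltimes\g^*$.

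First I would observe that, because $\tilde N$ is a derivation of $\g$, it maps $\g$ into $\g$, whereas $\tilde N^*$ and $P$ kill $\g$. Consequently $\g$ is invariant under $N$, and the restriction satisfies $N|_\g=a\,\tilde N$. In block form relative to the splitting $\g\ltimes\g^*=\g\oplus\g^*$ of~\eqref{eqn:grading_of_cotangent}, the derivation $N$ is block-diagonal with $\g$-block equal to $a\tilde N$, so the spectrum of $N$ contains $a$ times the spectrum of $\tilde N$.

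The final step is purely numerical: since $a>0$, multiplication by $a$ fixes $0$ and preserves the sign of negative numbers. Thus if $0$ is an eigenvalue of $\tilde N$, then $a\cdot 0=0$ is an eigenvalue of $N|_\g$, hence of $N$; and if $\lambda<0$ is an eigenvalue of $\tilde N$, then $a\lambda<0$ is an eigenvalue of $N|_\g$, hence of $N$.

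I do not anticipate any genuine obstacle here: the content is entirely contained in Proposition~\ref{prop:cotangentnik}, and the corollary is a one-line consequence of the block-diagonal shape of $N$ together with the positivity of $a$. The only point requiring a moment's care is the identification of $N|_\g$ with $a\tilde N$, which rests precisely on the vanishing of $\tilde N^*$ and $P$ on $\g$; once this is noted, the eigenvalue statement is immediate.
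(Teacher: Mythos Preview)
Your proposal is correct and follows exactly the paper's approach: the paper's entire argument is the sentence preceding the corollary, namely that $\tilde N^*$ and $P$ are extensions by zero of maps $\g^*\to\g^*$, from which $N|_\g=a\tilde N$ follows immediately. Your elaboration of this one-line observation---noting $N$-invariance of $\g$, the block-diagonal shape, and the sign-preserving effect of multiplication by $a>0$---is precisely what the paper leaves implicit.
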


\begin{example}
The $9$-dimensional nilpotent Lie algebra $\g$ given by
\[(0,0,0,e^{12},e^{14},e^{15}+e^{23},-e^{34}+e^{16},-e^{35}+e^{17},e^{28}-e^{47}+e^{56}+e^{13})\]
has Nikolayevsky derivation
\[\diag\left(\frac{8}{27},-\frac{4}{27},\frac{8}{9},\frac{4}{27},\frac{4}{9},\frac{20}{27},\frac{28}{27},\frac{4}{3},\frac{32}{27}\right).\]
One can check explicitly that every symmetric ad-invariant $\phi\colon\g\to\g^*$ is zero on $\g'$, so $\g$ is \adsolitary by Remark~\ref{rk:WWadsolitary}. By Corollary~\ref{cor:cotangentnegative}, its cotangent $\mg\ltimes \mg^*$ has a solitary ad-invariant metric, and its (metric) Nikolayevsky derivation $N$ has both positive and negative eigenvalues. Explicitly, using Proposition~\ref{prop:cotangentnik} one obtains
$a=\frac{243}{326}$ and the (metric) Nikolayevsky derivation on $\mg\ltimes \mg^*$ is given by:
\[N=\frac{1}{163}\diag(36,-18,108,18,54,90,126,162,144,207,261,135,225,189,153,117,81,99).\]
\end{example}

\section{Double extensions and uniqueness}\label{sec:DE}

The double extension introduced by Medina and Revoy \cite{MedinaRevoy} (see also \cite{Ovandosurvey}) is an inductive procedure which allows to construct Lie algebras with ad-invariant metrics. The starting data is the following:
\begin{enumerate}
\item a metric Lie algebra $(\md, g_\md)$, with $g_\md$ ad-invariant;
\item a Lie  algebra $\mh$ and a representation $\pi\colon\mh\to \mathrm{Der}(\md)\cap \so(\md,g_\md)$.
\end{enumerate}
Given this data we can construct a Lie algebra admitting ad-invariant metrics as follows.

Let $\mh^*$ denote the dual space of $\mh$ and consider the vector space $\mq:=\mh\oplus\md\oplus\mh^*$ together with the Lie bracket
\begin{multline}\label{eq:debr}
[(H,X,\alpha),(H',X',\alpha')]_\mq\\
=([H,H']_\mh,[X,X']_\md+\pi(H)X'-\pi(H')X,\beta(X,X')+\ad_\mh^*(H)\alpha'-\ad_\mh^*(H')\alpha),
\end{multline}
where $\ad_\mh^*\colon\mh\to \gl(\mh^*)$ is the coadjoint representation and $\beta$ is given by
\begin{equation}\label{eq:debeta}
\beta(X,X')(H)=g_\md(\pi(H)X,X'),\qquad \text{ for all  }X,X'\in \md, \, H\in \mh.
\end{equation}
One can verify that $(\mq,[\;,\,])$ is a Lie algebra. Moreover, from~\eqref{eq:debr} one can see that $\md\oplus\mh^*$ is an ideal of $\mq$ and $\mq$ is the semidirect product of $\mh$ and $\md\oplus\mh^*$ by the action $\mh\ni H\to (\pi(H),\ad_\mh^*(H))\in \End(\md\oplus\mh^*)$.

For any (possibly degenerate) ad-invariant symmetric bilinear form $g_\mh$ on $\mh$, the following bilinear form on $\mq$ is nondegenerate:
\begin{equation}\label{eq:deme}
\tilde g((H,X,\alpha),(H',X',\alpha'))=g_\mh(H,H')+g_\md(X,X')+\alpha(H')+\alpha'(H);
\end{equation}
this defines an ad-invariant metric on $(\mq,[\;,\,])$.

Notice that when $\md=0$ and $g_\mh=0$, this construction gives the cotangent Lie algebra $T^*\mh$ with its canonical metric. From \cite{MeRe93} we know that every nonsimple irreducible Lie algebra admitting an ad-invariant metric is isometrically isomorphic to a double extension. Moreover, every solvable Lie algebra admitting an ad-invariant metric has nontrivial center and can be written as a double extension with $\dim \mh=1$ and $\md$ solvable (see also \cite{FavreSantharoubane:adInvariant}).

Note that even though~\eqref{eq:deme} defines a family of ad-invariant metrics on $\mq$, one for each choice of $g_\mh$, Proposition~\ref{prop:rigid_independent} implies that one of them is \rigid if and only if so is the metric defined by $g_\mh=0$.

Next, we are going to use the double extension procedure to show that solitary Lie algebras are necessarily solvable. We start by showing the following:

\begin{proposition}\label{pro:solitDE}
If $\mh$ is not \adsolitary, then any double extension of the form $\mh\oplus\md\oplus\mh^*$ is not \rigid.
\end{proposition}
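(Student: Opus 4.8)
The plan is to transfer the obstruction on $\mh$ to the double extension $\mq=\mh\oplus\md\oplus\mh^*$. First I would reduce to the case $g_\mh=0$: by Proposition~\ref{prop:rigid_independent} together with the remark that all metrics~\eqref{eq:deme} are simultaneously solitary or not, it suffices to treat $\tilde g((H,X,\alpha),(H',X',\alpha'))=g_\md(X,X')+\alpha(H')+\alpha'(H)$. Since $\mh$ is not $T^*$-solitary, I fix, via Definition~\ref{def:adsolitary}, an ad-invariant symmetric $\phi\colon\mh\to\mh^*$ that is \emph{not} of the form $D+\tran D$ for any derivation $D\colon\mh\to\mh^*$, and define $\Phi\colon\mq\to\mq$ by $\Phi(H,X,\alpha)=(0,0,\phi(H))$. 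Using~\eqref{eq:debr}, the $\mh$-component of $[Z,W]$ is always $[H,H']_\mh$, so $\Phi[Z,W]=(0,0,\phi([H,H']_\mh))$, while $[Z,\Phi W]=(0,0,\ad^*_H\phi(H'))$; ad-invariance of $\phi$ gives $\phi([H,H']_\mh)=\ad^*_H\phi(H')$, so $\Phi$ is ad-invariant. Symmetry of $\phi$ and the shape of $\tilde g$ yield $\tilde g(\Phi Z,W)=\phi(H)(H')=\tilde g(Z,\Phi W)$, so $\Phi$ is self-adjoint. Thus $\Phi\in S_{\tilde g}$, and it remains to show $\Phi\notin D_{\tilde g}$.

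The crux is to argue that $\Phi=\mcD+\mcD^*$ for a derivation $\mcD$ of $\mq$ would contradict the choice of $\phi$. Let $\iota_\mh\colon\mh\to\mq$ and $\pi_{\mh^*}\colon\mq\to\mh^*$ denote the inclusion and projection, and set $D:=\pi_{\mh^*}\circ\mcD\circ\iota_\mh\colon\mh\to\mh^*$, the $(\mh^*,\mh)$-block of $\mcD$. Projecting the derivation identity $\mcD[Z,W]=[\mcD Z,W]+[Z,\mcD W]$ onto $\mh^*$ for $Z=(H,0,0)$ and $W=(H',0,0)$, and using that $\beta(\cdot,0)=\beta(0,\cdot)=0$ in~\eqref{eq:debeta} so that the $\md$-blocks of $\mcD$ produce no $\mh^*$-contribution, I obtain
\[D[H,H']_\mh=\ad^*_H DH'-\ad^*_{H'}DH,\]
which is exactly the condition for $D$ to be a derivation of the cotangent $\mh\ltimes\mh^*$. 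The vanishing of $\beta$ whenever one argument is zero is the decisive point that decouples the $\md$-directions from this computation; verifying it carefully is the main obstacle, since a priori the blocks $\mcD_{\md\mh}$ and $\mcD_{\mh\md}$ could have entered.

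Finally, taking the $(\mh^*,\mh)$-block of $\Phi=\mcD+\mcD^*$ gives $\phi=D+\pi_{\mh^*}\mcD^*\iota_\mh$. For $H,H'\in\mh$ I would compute the last term through the metric:
\[(\pi_{\mh^*}\mcD^*\iota_\mh\,H)(H')=\tilde g(\mcD^*(H,0,0),(H',0,0))=\tilde g((H,0,0),\mcD(H',0,0))=(DH')(H),\]
where $g_\mh=0$ kills every contribution except the $\mh$--$\mh^*$ pairing. Hence $\pi_{\mh^*}\mcD^*\iota_\mh=\tran D$, so $\phi=D+\tran D$ with $D$ a derivation of $\mh\ltimes\mh^*$, contradicting the choice of $\phi$. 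Therefore $\Phi\notin D_{\tilde g}$, the canonical metric on $\mq$ is not solitary, and by Proposition~\ref{prop:rigid_independent} neither is any metric~\eqref{eq:deme}.
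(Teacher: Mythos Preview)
Your proof is correct and follows essentially the same approach as the paper's own argument: you build the same ad-invariant self-adjoint map $\Phi$ on $\mq$ from the obstruction $\phi$ on $\mh$, and derive the contradiction by projecting a hypothetical derivation $\mcD$ back to a derivation $D\colon\mh\to\mh^*$ with $\phi=D+\tran D$. Your write-up is in fact slightly more explicit than the paper's in justifying why the $\md$-blocks of $\mcD$ do not contribute to the $\mh^*$-projection of the derivation identity (via $\beta(\cdot,0)=0$), a point the paper leaves to the reader.
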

\begin{proof}
The fact that $\mh$ is not \adsolitary implies that there is a linear map $\phi\colon\mh\to \mh^*$, ad-invariant and symmetric, which cannot be written as $D+D^t$, for any derivation $D\colon\mg\to\mg^*$.

Let $\mq=\mh\oplus\md\oplus\mh$ be a double extension of the Lie algebra $(\md,g_\md)$ by a representation \linebreak$\pi\colon\mh\to \Der(\md)\cap \so(\md,g_\md)$. One can easily check that the map $\tilde\phi\colon\mh\oplus\md\oplus\mh^*\to \mh\oplus\md\oplus\mh^*$, defined by $\tilde\phi(H,X,\alpha)=(0,0,\phi(H))$ for every $(H,X,\alpha)\in \mq$, is ad-invariant on $\mq$.

Assume for a contradiction that $\mq$ is \rigid. In particular the ad-invariant metric $\tilde g$ as in~\eqref{eq:deme} with $g_\mh=0$ is \rigid.

The map $\tilde\phi$  defined above is self-adjoint with respect to $\tilde g$. Indeed, since $\phi$ is symmetric, and thus verifies $\phi(H)(H')=\phi(H')(H)$ for every $H,H'\in \mh$, we have
\[
\tilde g(\tilde\phi(H,X,\alpha),(H',X',\alpha'))=\phi(H)(H')=\phi(H')(H)=\tilde g((H,X,\alpha),\tilde\phi(H',X',\alpha')),
\]
for all  $H,H'\in \mh$, $X,X'\in \md$ and $\alpha,\alpha'\in \mh^*$. Therefore, $\tilde\phi$ is ad-invariant and $\tilde g$-self-adjoint on $\mq$. By our assumption of $\mq$ being \rigid, there exists a derivation $ E$ of $\mq$ such that $\tilde\phi=E+ E^*$. In particular, for every $H,H'\in \mh$, $E$ verifies
\begin{equation}\label{eq:Eproy}
\mathrm{Pr}_{\mh^*}\circ E[H,H']=\ad_H^*(\mathrm{Pr}_{\mh^*}\circ EH')-\ad_Y^*(\mathrm{Pr}_{\mh^*}\circ EH),
\end{equation}
where $\mathrm{Pr}$ denotes the projection map.
This implies that the linear map $D\colon\mh \to\mh^*$ defined by $D= \Pr_{\mh^*}\circ E|_{\mh}$ on $\mh$ is a derivation of $T^*\mh$. In addition, since $\tilde g(\mh,\mh)=0$, for every $H,H'\in \mh$ we have
\begin{align*}
(D+D^t)(H)(H')&=\mathrm{Pr}_{\mh^*}( EH')(H)+\mathrm{Pr}_{\mh^*}(EH)(H')\\
& =\tilde g(H,EH')+\tilde g(EH,H')=\tilde g(\tilde \phi H,H')=\phi (H)(H').
\end{align*}
Therefore, $D+D^t=\phi$, which is a contradiction.
\end{proof}

\begin{remark}
If $\mh$ is \adsolitary and $(\md,g_{\md})$ is \rigid, it could still be the case that $\mh\oplus\md\oplus\mh^*$ is not \rigid. For example, the $12$-dimensional Lie algebra in  Example~\ref{ex:CharacteristicallyNilpotentDIM12} is the double extension by $\R$ of a $10$-dimensional metric nilpotent Lie algebra, which is \rigid.
\end{remark}

According to \cite[Theorem I]{MedinaRevoy}, every Lie algebra $\mg$ admitting an ad-invariant metric, which is irreducible as a metric Lie algebra, is either simple or it can be written as a double extension by a simple or a $1$-dimensional Lie algebra. More precisely, it is shown there that given a maximal ideal $\mm$ of $\mg$, the data to construct the double extension are $\mh=\mg/\mm$ and $\md=\mm/\mm^\bot$, and $\mh$ is either simple or it is of dimension $1$.

\begin{theorem}
\label{thm:solitaryimpliessolvable}
If a Lie algebra  is $T^*$-solitary, then it is solvable.
\end{theorem}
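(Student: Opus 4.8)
The plan is to derive the theorem from the analogous statement for metric Lie algebras: \emph{a metric Lie algebra $(\g,g)$ whose ad-invariant metric is \rigid is necessarily solvable}. Granting this, the theorem is immediate. Indeed, if $\g$ is \adsolitary, then by Corollary~\ref{cor:cotangentunique} the cotangent $T^*\g=\g\ltimes\g^*$, equipped with its canonical ad-invariant metric, is a \rigid metric Lie algebra; the italicized claim then forces $T^*\g$ to be solvable, and since $\g$ is a subalgebra of $T^*\g$ (equivalently, the quotient by the abelian ideal $\g^*$), $\g$ is solvable too. So all the work lies in the metric version.

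First I would argue the metric claim by contradiction, assuming $(\g,g)$ is \rigid but not solvable, and reduce to the irreducible case: decomposing $\g$ as an orthogonal sum of irreducible ideals, each summand is again \rigid by Proposition~\ref{prop:decomposablesolitary} and carries an induced ad-invariant metric, and since a direct sum of solvable ideals is solvable, some summand is non-solvable; I replace $\g$ by it. Now $\g$ is irreducible, \rigid and non-solvable, hence irreducible as a metric Lie algebra by Proposition~\ref{prop:irr_is_irr}, so the Medina--Revoy structure theorem \cite{MedinaRevoy} applies and presents $\g$ either as a simple Lie algebra or as a double extension $\mh\oplus\md\oplus\mh^*$ attached to a maximal ideal $\mm$, with $\mh=\g/\mm$ and $\md=\mm/\mm^\bot$.

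The hard part will be to steer the argument away from the branch where $\mh$ is one-dimensional: there $\mh$ is \adsolitary, so Proposition~\ref{pro:solitDE} yields nothing, and an induction on $\md$ does not obviously transport the \rigid property. My way around this is to use non-solvability to choose $\mm$ so that $\mh$ is simple. Since $\g$ is not solvable, $\g/\mr$ is semisimple and nonzero, where $\mr$ is the radical; pulling back the sum of all but one of its simple factors gives a maximal ideal $\mm$ with $\g/\mm$ simple. If $\mm=0$ then $\g$ itself is simple, and Example~\ref{ex:simple} shows $\g$ is not \rigid (all derivations are inner, hence traceless, so $\id$ cannot be written as $D+D^*$) --- a contradiction. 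If $\mm\neq0$, then $\g$ is the double extension of $\md=\mm/\mm^\bot$ by the \emph{simple} Lie algebra $\mh=\g/\mm$; as a simple Lie algebra is never \adsolitary by Corollary~\ref{cor:simplenotTsol}, Proposition~\ref{pro:solitDE} shows this double extension is not \rigid, again contradicting our hypothesis. Either way we reach a contradiction, establishing the metric claim and hence the theorem. The only point needing care is the verification that the maximal ideal just produced does fit the Medina--Revoy setup --- i.e. that $\mm^\bot\subseteq\mm$ --- which follows because $\mm^\bot$ is then a minimal ideal and $\g$ is irreducible as a metric Lie algebra.
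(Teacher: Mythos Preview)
Your proof is correct and follows essentially the same route as the paper's: reduce to the metric claim via Corollary~\ref{cor:cotangentunique}, pass to an irreducible factor via Proposition~\ref{prop:decomposablesolitary}, use non-solvability and Levi theory to produce a maximal ideal $\mm$ with $\g/\mm$ simple, invoke Medina--Revoy to exhibit $\g$ as a double extension by a simple $\mh$, and then combine Corollary~\ref{cor:simplenotTsol} with Proposition~\ref{pro:solitDE}. You are slightly more explicit than the paper in separating out the case $\mm=0$ and in flagging the verification $\mm^\bot\subseteq\mm$, but the argument is the same.
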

\begin{proof}
Let $\mg$ be a $T^*$-solitary Lie algebra. We first prove the statement for Lie algebras admitting an ad-invariant metric.

Suppose that $\mg$ admits an ad-invariant metric $g$, which is solitary by Theorem~\ref{thm:rigiffTrig}.

Assume first that $\mg$ is irreducible, so that $(\mg,g)$ is irreducible as a metric Lie algebra by Proposition~\ref{prop:irr_is_irr}. If $\mg$ is not solvable, then by Levi's theorem we can write $\mg=\ms\ltimes \mm$, where $\ms$ a simple Lie algebra and $\mm$ is a maximal ideal not necessarily solvable. Following \cite{MedinaRevoy}, $\mg$ is isometrically isomorphic to a double extension of $\md=\mm/\mm^\bot$, which admits an ad-invariant metric, by
$\mh=\mg/\mm=\ms$. We may write $\mg=\mh\oplus\md\oplus\mh^*$.

Since $\mh$ is simple, $\mh$ is not $T^*$-\rigid by Corollary~\ref{cor:simplenotTsol} and thus $\mg$ is not \rigid by Proposition~\ref{pro:solitDE}, which is absurd. So $\mg$ is solvable if it is irreducible.

Assume now that $\mg$ is not irreducible. Then it is reducible and each irreducible factor is \rigid by Proposition~\ref{prop:decomposablesolitary}. So each of these factors is solvable by the reasoning above. Therefore, $\mg$ is solvable too.

In the general case, $\g$ being $T^*$-solitary implies that $T^*\g$ (which admits an ad-invariant metric) is solitary by Corollary~\ref{cor:cotangentunique}. So we have that $T^*\mg$ is solvable and, being isomorphic to the quotient of $T^*\g$ by the ideal $\g^*$, the Lie algebra $\g$ is also solvable.
\end{proof}
\begin{remark}
The examples of nonsolitary solvable Lie algebras with ad-invariant metrics presented in Section~\ref{sec:uniqueness} show that the converse of Theorem~\ref{thm:solitaryimpliessolvable} does not hold.
\end{remark}

Finally, we prove that the metric Nikolayevsky derivation provides a decomposition of the Lie algebra as a double extension.

Let $(\mg,g)$ be a Lie algebra endowed with an ad-invariant metric such that a metric Nikolayevsky derivation has nonnegative integer eigenvalues. By Remark~\ref{rem:nilpgrad}, $\mg$ is isomorphic as a Lie algebra to $\mh\ltimes \mn$, where $\mh$ is the eigenspace corresponding to the zero eigenvalue of the metric Nikolayevsky derivation, and $\mn$ is the direct sum of the eigenspaces corresponding to the positive eigenvalues.
The following result describes the structure of $\mg$ as a metric Lie algebra.

\begin{proposition}
\label{prop:nonnegativeimpliesextension}
Let $\mg$ be a Lie algebra endowed with an ad-inva\-riant metric $g$ such that the metric Nikolayevsky derivation $N$ is nonzero, singular and with nonnegative eigenvalues. Then $\mg$ is isometric to a double extension $\mh\oplus \md\oplus\mh^*$ with $\mh=\ker N$ and $g_\mh=0$.
\end{proposition}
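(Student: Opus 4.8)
The plan is to read off the double extension data directly from the grading induced by $N$. By Corollary~\ref{cor:metnikgrad} the eigenspaces of $N$ give a grading $\mg=\bigoplus_i\mb_i$, which we may take over the integers and which satisfies $\mb_i\perp\mb_j$ unless $i+j=l$ for a fixed $l$. Since $N\neq0$ has nonnegative eigenvalues, $l>0$ is the largest eigenvalue; since $N$ is singular, $0$ is an eigenvalue, and then $\mb_i\perp\mb_j$ unless $i+j=l$ forces $l$ itself to be an eigenvalue, with every eigenvalue lying in $\{0,\dots,l\}$. I set $\mh=\mb_0=\ker N$, which is a subalgebra, and identify the top piece $\mb_l$ with $\mh^*$: indeed $\mb_0^\perp=\bigoplus_{j\neq l}\mb_j$, so $g$ restricts to a perfect pairing $\mb_0\times\mb_l\to\K$, giving an isomorphism $\mb_l\to\mh^*$, $\xi\mapsto\hat\xi$ with $\hat\xi(H)=g(\xi,H)$. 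The middle part $\md:=\bigoplus_{0<i<l}\mb_i$ is nondegenerate, because $l-i$ again lies strictly between $0$ and $l$, so I take $g_\md:=g|_\md$.

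Next I would produce the remaining data. The nilradical $\mn=\bigoplus_{i\geq1}\mb_i$ (Remark~\ref{rem:nilpgrad}) contains $\mb_l$ as a central ideal, since $[\mb_i,\mb_l]\subseteq\mb_{i+l}=0$ for $i\geq1$; hence $\md\cong\mn/\mb_l$ inherits a Lie bracket $[\cdot,\cdot]_\md=\pr_\md\circ[\cdot,\cdot]$, which automatically satisfies the Jacobi identity. The map $\pi(H):=\ad_H|_\md$ preserves $\md$ and is a derivation of $[\cdot,\cdot]_\md$, defining a representation $\mh\to\Der(\md)$. Ad-invariance of $g$ gives at once that $\pi(H)\in\so(\md,g_\md)$ and that $g_\md$ is ad-invariant, so $(\mh,\md,g_\md,\pi)$ is admissible double extension data.

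The core of the argument is then to check that, under the vector space identification $\mg\cong\mh\oplus\md\oplus\mh^*$, the bracket of $\mg$ coincides with~\eqref{eq:debr} and the metric $g$ with~\eqref{eq:deme} for $g_\mh=0$. Expanding $[H+X+\xi,H'+X'+\xi']$ and using $[\md,\mb_l]=[\mb_l,\mb_l]=0$ together with the grading, every cross term vanishes except those predicted by~\eqref{eq:debr}; the two nontrivial identifications are that $\ad_H|_{\mb_l}$ corresponds to the coadjoint action $\ad^*_\mh(H)$, which follows from $g([H,\xi],H')=-g(\xi,[H,H'])$, and that the $\mb_l$-component of $[X,X']$ reproduces the cocycle $\beta$ of~\eqref{eq:debeta}, which follows from $g([X,X'],H)=g(\pi(H)X,X')$, again by ad-invariance combined with $\mb_0\perp\mb_j$ for $j\neq l$. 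The metric identity is a similarly short computation, all cross terms being killed by $\mb_0\perp\mb_0$, $\mb_l\perp\mb_l$ and $\md\perp(\mh\oplus\mh^*)$.

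I expect the main obstacle to be purely organizational: keeping the orthogonality relations $\mb_i\perp\mb_j$ straight while matching the nine bilinear terms against the compact formula~\eqref{eq:debr}, and ensuring that the isomorphism $\mb_l\cong\mh^*$ converts $\ad_H$ into the coadjoint representation with the correct sign. No genuinely hard step arises, since the Lie algebra axioms for $\md$ are handed to us for free by realizing $\md$ as the quotient $\mn/\mb_l$, rather than having to verify a Jacobi identity by hand.
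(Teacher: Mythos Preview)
Your proposal is correct and follows essentially the same approach as the paper's proof: both exploit the grading $\mg=\bigoplus_{0\le i\le l}\mb_i$ coming from $N$, set $\mh=\mb_0$, identify $\mb_l\cong\mh^*$ via $g$, realize $\md$ as the middle part (equivalently the quotient $\mn/\mb_l$), and then verify termwise that the bracket and metric agree with~\eqref{eq:debr} and~\eqref{eq:deme}. The only cosmetic difference is that the paper phrases $\md$ as the quotient $\mn/\mb_l$ with the induced metric and then builds the isometry $\zeta$, whereas you take $\md$ as the subspace $\bigoplus_{0<i<l}\mb_i$ with the projected bracket; these are the same construction under the obvious identification.
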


\begin{proof}
Since the metric Nikolayevsky derivation is an element of $\co$ with positive trace, there exists $l>0$ such that if $\lambda$ is an eigenvalue of the metric Nikolayevsky derivation, then $l-\lambda$ is an eigenvalue too.

Let $\mg=\bigoplus_t \mb_{t}$ be the grading given by the eigenspaces of the metric Nikolayevsky derivation; the index $t$ ranges between $0$ and $l$, for some positive $l$. By Remark~\ref{rem:nilpgrad}, $\mh:=\mb_0$ is a subalgebra and $\mn:=\bigoplus_{t>0}\mb_{t}$ is an ideal of $\mg$. Moreover, $\mb_l$ is in the center of $\mn$ so we can consider the quotient Lie algebra $\md:=\mn/\mb_l$. The metric $g$ induces a nondegenerate ad-invariant metric $g_\md$ so that the quotient map $q\colon\mathfrak n\to \md$ is an isometry. Moreover, $q$ is injective when restricted to $\bigoplus_{0<t<l}\mb_{t}$.

Since $\mb_{i}$ and $\mb_{j}$ are orthogonal unless $i+j=l$, $\md$ inherits an ad-invariant metric $g_{\md}$, and $\mb_l$ can be identified with $\mh^*$ through  the linear map $\eta\colon\mb_l\to \mh^*$, $x\mapsto \eta(x)=g(x,\cdot)$.

For any $X\in \mh$, $\ad_X$ preserves $\mb_{t}$, for $t=0, \ldots, l$, so $\ad_X$ also preserves $\mathfrak n$ and it defines a skew-symmetric map $\overline{ \ad_X}$ on $\md$. Moreover, $\pi\colon\mh\to \mathrm{Der}(\mg)\cap \so(\md, g_{\md})$, $X\mapsto \pi(X)=\overline{ \ad_X}$ is a representation.

Let $Y_0,Y_0\in \mb_0$ and $Y,Y'\in  \bigoplus_{0<t<l}\mb_{t}$. Then
\[
\pi(Y_0)q(Y)=\overline{\ad_{Y_0}}q(Y)=q([Y_0,Y])
\]
and $\beta$ in~\eqref{eq:debeta} is given by,
\[
\beta(q(Y),q(Y'))(H)=g_\md(\pi(H)q(Y),q(Y'))=g_\md(q([H,Y]),q(Y'))=g([H,Y],Y'),
\]
for every $H\in \mb_0$.
This implies
\begin{equation}\label{eq:betaisom}
\beta(q(Y),q(Y'))=\eta( \mathrm{Pr}_{\mb_l}(\ad_YY')).
\end{equation}
Finally, by definition of $\eta$ we have
\begin{equation}\label{eq:adjointisom}
\ad_\mh^*(Y_0)\eta(Y_l)=\eta([Y_0,Y_l]), \text{ for every } Y_0\in \mb_{0}, Y_l\in \mb_{l}.
\end{equation}

Let $(\mq,\tilde g)$ be the double extension of $(\md, g_{\md})$  by  $\mh$, taking the trivial metric on $\mh$, and the Lie algebra homomorphism $\pi$. The Lie bracket in $\mq$ is given by~\eqref{eq:debr}, and the ad-invariant metric $\tilde g$ is as in~\eqref{eq:deme}, with $g_\mh=0$.
We shall prove that $\mg$ is isometrically isomorphic to $\mq$.

Since $\mg$ is graded, every element in $\mg$ can be written as $Y_0+Y+Y_l$, with $Y_0\in \mb_0$, $Y_l\in \mb_l$ and $Y\in \bigoplus_{0<t<l}\mb_{t}$. This allows us to define the linear map
\[
\zeta\colon\mg\to\mq,\qquad \zeta(Y_0+Y+Y_l)=(Y_0,q(Y),\eta(Y_l)).
\]

From the fact that $\mb_{i}\bot \mb_{j}$ unless $i+j=l$, one can easily check that $\zeta$ is an isometry.
Moreover, by~\eqref{eq:debr},~\eqref{eq:betaisom} and~\eqref{eq:adjointisom} one obtains that $\zeta$ is a Lie algebra homomorphism, which is clearly bijective.
\end{proof}

Under the hypotheses of Proposition~\ref{prop:nonnegativeimpliesextension}, and due to Proposition~\ref{prop:impliesadsolitary}, $\g$ is \rigid if and only if the zero eigenspace $\mh$ is \adsolitary.

\section{The solitary condition in low dimension}\label{sec:lowdim}
In this last section we apply the results obtained along the paper to prove uniqueness of the ad-invariant metric, in the sense of Theorem~\ref{thm:rigidimpliesuniqueness}, for solvable Lie algebras of dimension $\leq 6$ and real nilpotent Lie algebras of dimension $\leq 10$. Indeed, we prove that every Lie algebra in these two classes is $T^*$-solitary, regardless of whether an ad-invariant metric exists; when it does, it is necessarily solitary by Theorem~\ref{thm:rigiffTrig}.

To this purpose, we start by giving in the following lemma the classification of solvable Lie algebras admitting ad-invariant metrics, up to dimension $4$. The result can be proved following similar techniques as in Propositions~2.3 and~2.5, and Lemma~2.4 in \cite{dBOV}, and by computing skew-symmetric derivations.

\begin{lemma}\label{lm:solv4dim}
Let $\g$ be a solvable Lie algebra over $\K=\R,\C$ admitting an ad-invariant metric $g$, and assume $\dim\g\leq 4$. If $\mg$ is nonabelian, then $\g$ has dimension $4$, $\Der(\mg)\cap \so(\mg,g)=\ad(\mg)$, and there is an isometric isomorphism mapping $(\g,g)$ to one of
\begin{enumerate}
\item $\md_1=(0,13,-12,-23)$ with the ad-invariant metric $g_1=e^1\odot e^4+e^2\otimes e^2+e^3\otimes e^3$; or
\item $\md_2=(0,-12,13,-23)$ with the ad-invariant metric $g_2=e^1\odot e^4+e^2\odot e^3$.
\end{enumerate}
If $\K=\C$, the metric Lie algebras $(\md_1,g_1)$ and $(\md_2,g_2)$ are isometrically isomorphic.
\end{lemma}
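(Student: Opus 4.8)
The plan is to combine the structural description of solvable metric Lie algebras as double extensions with a direct computation of skew-symmetric derivations, so I will first reduce the problem to a single irreducible $4$-dimensional case. The key elementary tool is the identity $(\mg')^\bot=\mz(\mg)$ from Section~\ref{sec:Decomposable}, which forces $\dim\mz(\mg)=\dim\mg-\dim\mg'$. A nonabelian Lie algebra of dimension $2$ has $\dim\mg'=1$ and trivial center, so the required $\dim\mz(\mg)=1$ is impossible; hence no $2$-dimensional nonabelian Lie algebra admits an ad-invariant metric. For dimension $3$, if $\mg$ is reducible then by Proposition~\ref{pro:decomposable} one of its factors is a nonabelian $2$-dimensional Lie algebra carrying an ad-invariant metric, which we have just excluded; if $\mg$ is irreducible and nonabelian, then by the Medina--Revoy description of solvable metric Lie algebras (Section~\ref{sec:DE}, \cite{MedinaRevoy}) it is a double extension with $\dim\mh=1$ and $\dim\md=1$, forcing $\md$ abelian and the representation $\pi\colon\mh\to\so(\md,g_\md)=0$ to vanish, so that the bracket~\eqref{eq:debr} collapses and $\mg$ is abelian --- a contradiction. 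Thus no nonabelian example exists below dimension $4$, and the same argument rules out reducibility in dimension $4$ (a reducible decomposition would contain a nonabelian factor of dimension $\leq 3$ admitting an ad-invariant metric), so a nonabelian $\mg$ with $\dim\mg\le 4$ must be irreducible of dimension $4$.

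Being irreducible, nonabelian and solvable, $\mg$ is a double extension $\mh\oplus\md\oplus\mh^*$ with $\dim\mh=1$ and $\dim\md=2$; since $\md$ is a $2$-dimensional metric Lie algebra it is abelian, so $\md\cong\R^2$ and $\pi(e_1)$ is an element of $\so(\R^2,g_\md)$. If $\pi(e_1)=0$ the bracket~\eqref{eq:debr} is trivial and $\mg$ abelian, so $\pi(e_1)\neq 0$; as $\so(\R^2,g_\md)$ is one-dimensional, rescaling $e_1$ (and dually $e_4$, to preserve the pairing) normalizes $\pi(e_1)$ to a standard generator, and the resulting Lie algebra depends only on the signature of $g_\md$. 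A definite $g_\md$ makes $\pi(e_1)$ a rotation and yields $\md_1$, while a neutral $g_\md$ makes it a boost and yields $\md_2$; one checks directly that, up to an overall change of sign (in accordance with Theorem~\ref{thm:rigidimpliesuniqueness}\ref{item:uniquereal}), the metric is the listed representative $g_1$, respectively $g_2$. For $\K=\C$ all nondegenerate symmetric forms on $\C^2$ are equivalent, so there is a single case; rescaling $e_1$ by $i$ (and $e_4$ by $-i$) converts the eigenvalues $\pm i$ of $\ad_{e_1}$ into $\pm1$, i.e.\ a rotation generator into a boost generator, and this exhibits the isometric isomorphism $(\md_1,g_1)\cong(\md_2,g_2)$.

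It remains to establish $\Der(\mg)\cap\so(\mg,g)=\ad(\mg)$, which is where the bulk of the work lies. The inclusion $\ad(\mg)\subseteq\so(\mg,g)$ is immediate from ad-invariance of $g$, and $\ad(\mg)\cong\mg/\mz(\mg)$ is three-dimensional since $\mz(\mg)=\Span{e_4}$. The plan is to write a general $D\in\so(\mg,g)$ --- a six-parameter family --- impose the derivation identities $D[e_i,e_j]=[De_i,e_j]+[e_i,De_j]$ on the generating brackets, and solve the resulting linear system, organizing the computation around the $D$-invariant flag $\mz(\mg)\subset\mg'\subset\mg$. I expect the main obstacle to be exactly this verification: one must check that the derivation constraints cut the six-dimensional space $\so(\mg,g)$ down to precisely the three-dimensional $\ad(\mg)$, ruling out any ``outer'' skew-symmetric derivation (for instance a diagonal grading-type operator, which the structure relations force to be inner). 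Since $\md_1$ and $\md_2$ differ only in the real versus imaginary nature of the eigenvalues of $\ad_{e_1}$, the two derivation computations are formally identical, and over $\C$ they coincide after the rescaling above.
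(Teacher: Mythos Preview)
Your approach is correct and aligns with the paper's own treatment: the paper does not give a detailed proof either, but simply refers to the double-extension techniques of \cite{dBOV} together with a direct computation of skew-symmetric derivations, which is precisely the strategy you carry out. One minor caution: the parenthetical appeal to Theorem~\ref{thm:rigidimpliesuniqueness} is best dropped, since the normalization of the metric to $g_1$ or $g_2$ follows directly from rescalings in the double-extension data rather than from any solitariness statement, and over $\C$ the passage from $\md_1$ to $\md_2$ requires, in addition to rescaling $e_1$, a change of basis on $\Span{e_2,e_3}$ diagonalizing $\ad_{e_1}$.
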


Now we proceed with a structural result.

\begin{proposition}\label{pro:dim6solv}
Every solvable Lie algebra over $\K=\R,\C$ of dimension $\leq 6$ admitting ad-invariant metrics is either an orthogonal direct sum of $\K^2\oplus \md$, with $\md$ solvable, or isometric to a double extension of the form $\mh\oplus\md\oplus\mh^*$ with $\md$ abelian and $\mh$ of dimension $1$.
\end{proposition}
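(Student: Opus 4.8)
The plan is to combine the reduction of solvable metric Lie algebras to double extensions by a one-dimensional factor with the dimension-four classification of Lemma~\ref{lm:solv4dim}, and then to exploit that in the remaining case the defining action is inner.

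First I would invoke the structural fact recalled in Section~\ref{sec:DE} (from \cite{MeRe93,FavreSantharoubane:adInvariant}): a solvable Lie algebra $\mg$ with an ad-invariant metric is isometric to a double extension $\mg=\mh\oplus\md\oplus\mh^*$ with $\dim\mh=1$ and $\md$ solvable. As $\dim\mg\leq 6$ this gives $\dim\md\leq 4$, and $\md$ carries the ad-invariant metric $g_\md$. I would then apply Lemma~\ref{lm:solv4dim} to $(\md,g_\md)$. If $\md$ is abelian we are already in the second alternative (this also absorbs the case where $\mg$ itself is abelian, via the trivial action), so it remains to treat the case in which $\md$ is nonabelian; then $\dim\md=4$, $\dim\mg=6$, and the crucial input from Lemma~\ref{lm:solv4dim} is that $\Der(\md)\cap\so(\md,g_\md)=\ad(\md)$. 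Consequently the representation $\pi\colon\mh\to\Der(\md)\cap\so(\md,g_\md)$ that defines the extension is inner: writing $H_0$ for a generator of $\mh$, we have $\pi(H_0)=\ad_{X_0}$ for some $X_0\in\md$.

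The heart of the argument, which I expect to be the main obstacle, is to extract an orthogonal splitting $\mg\cong\K^2\oplus\md'$ from this inner action. The idea is to correct the generator of $\mh$ by $X_0$. Set
\[
Z=(H_0,-X_0,0),\qquad W=(0,0,H_0^*)\in\mh^*,
\]
where $H_0^*$ is the dual generator of $\mh^*$. Using the bracket~\eqref{eq:debr} I would check that both $Z$ and $W$ are central in $\mg$: for $W$ this is immediate because $\mh$ is one-dimensional, hence abelian, so $\ad_\mh^*(H_0)=0$; for $Z$ the $\md$-component of $[Z,\,\cdot\,]$ cancels precisely because $\pi(H_0)=\ad_{X_0}$, while its $\mh^*$-component equals $-\beta(X_0,\,\cdot\,)$, which vanishes since by~\eqref{eq:debeta}
\[
\beta(X_0,X')(H_0)=g_\md(\ad_{X_0}X_0,X')=g_\md([X_0,X_0],X')=0.
\]
Here I would use the standard consequence of ad-invariance that $(A,B,C)\mapsto g_\md([A,B],C)$ is totally antisymmetric, so it vanishes on repeated arguments.

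Finally I would record that $Z$ and $W$ are linearly independent and span an abelian (because central) ideal whose Gram matrix for the metric~\eqref{eq:deme} is
\[
\begin{pmatrix} g_\mh(H_0,H_0)+g_\md(X_0,X_0) & 1\\ 1 & 0\end{pmatrix},
\]
of determinant $-1$, hence nondegenerate. Thus $\Span{Z,W}\cong\K^2$ is a two-dimensional nondegenerate central ideal; its orthogonal complement $\md'$ is then a complementary ideal, necessarily solvable of dimension $4$, and $\mg=\K^2\oplus\md'$ is the orthogonal direct sum of the first alternative. The delicate points are exactly the centrality computation for $Z$ (where the inner form of $\pi$ and the antisymmetry of $\beta$ are both needed) and the nondegeneracy of the $2\times 2$ Gram matrix; once these are verified, the two cases coming from Lemma~\ref{lm:solv4dim} exhaust all possibilities.
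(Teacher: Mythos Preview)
Your proposal is correct and follows essentially the same route as the paper: reduce to a one-dimensional double extension, use Lemma~\ref{lm:solv4dim} to see that in the nonabelian case the defining derivation is inner, and then split off a nondegenerate two-dimensional central ideal. The only difference is that the paper cites \cite[Proposition~2.11]{FavreSantharoubane:adInvariant} for the last step (an inner double extension is isometric to one with zero derivation, hence splits as $\K^2\oplus\md$), whereas you carry out this step explicitly by exhibiting the central elements $Z=(H_0,-X_0,0)$ and $W=(0,0,H_0^*)$ and checking nondegeneracy; your construction is precisely the content of that cited proposition. One cosmetic remark: the vanishing of $\beta(X_0,X')(H_0)=g_\md([X_0,X_0],X')$ follows directly from $[X_0,X_0]=0$, so the appeal to total antisymmetry is unnecessary.
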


\begin{proof}
Let $\mg$ be a solvable Lie algebra of dimension $\leq 6$ and let $g$ be an ad-invariant metric on $\mg$. Since $\mz(\mg)\neq 0$, $(\mg,g)$ is isometric to a double extension of the form $\mg=\mh\oplus \md \oplus\mh^*$, where $(\md,g_\md)$ is solvable, $\mh=\K e$ and the representation $\pi\colon\mh\to \Der(\md)\cap \so(\md,g_\md)$ is determined by a skew-symmetric derivation  $D:=\pi(e)$ of $(\md,g_\md)$ (see Section~\ref{sec:DE}). Recall that, if $f$ denotes a generator of $\mh^*$, the Lie bracket of $\mg$ satisfies
\begin{equation}\label{eq:br}
[e,x]=Dx,\qquad [x,y]=[x,y]_{\md}+g_\md(Dx,y)f,\quad \text{ for all }x,y\in \md,
\end{equation}
and for every $x,y\in \md$,  the metric verifies $g(x,y)=g_\md(x,y)$, $g(e,x)=g(f,x)=0$ and $g(e, f)=1$.

If $D=0$ then $\mg$ is the orthogonal direct sum of $\K^2\simeq \mh\oplus\mh^*$ and $\md$. Moreover, if $\md$ is abelian, then the result follows trivially.

So from now on we assume that $\md$ is not abelian and  $D\neq 0$. By Lemma~\ref{lm:solv4dim}, $\dim \md=4$ and $D=\ad_w$ for some $w\in \md$. By \cite[Proposition 2.11]{FavreSantharoubane:adInvariant} we obtain that
$\mg$ is isometrically isomorphic to a double extension with zero derivation. Therefore, $\mg$ can be written as an orthogonal direct sum $\K^2\oplus\md$.
\end{proof}

Now we are ready to introduce the main results of the section.
\begin{proposition}\label{prop:solvablesolitary}
Every solvable Lie algebra of dimension $\leq 6$ is \adsolitary. In particular, every ad-invariant metric on a solvable Lie algebra of dimension $\leq 6$ is solitary.
\end{proposition}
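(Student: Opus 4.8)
The plan is to argue by contradiction, using Proposition~\ref{prop:quotientweak} as the inductive engine and Proposition~\ref{pro:dim6solv} as the structural input. Assuming that some solvable Lie algebra of dimension $\leq 6$ fails to be \adsolitary, Proposition~\ref{prop:quotientweak} produces a quotient $\mq$ that is again solvable of dimension $\leq 6$, admits an ad-invariant metric, and carries a nonsolitary ad-invariant metric — or, in the real case, is the underlying real Lie algebra $\mpp^\R$ of a complex Lie algebra $\mpp$ with a weakly solitary ad-invariant metric. The goal is to show that in every case $\mq$ is in fact \adsolitary: since $\mq$ admits an ad-invariant metric, Theorem~\ref{thm:rigiffTrig} then forces $\mq$ to be \rigid, and Proposition~\ref{prop:rigid_independent} forces every ad-invariant metric on $\mq$ to be \rigid, contradicting the existence of a nonsolitary one. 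Recall that being \adsolitary is a property of the Lie algebra alone, so the argument may ignore which metric is carried.

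The central observation I would isolate first is that any double extension $\mh\oplus\md\oplus\mh^*$ with $\md$ abelian and $\dim\mh=1$ is \adsolitary. Assigning degrees $0$, $1$, $2$ to $\mh$, $\md$, $\mh^*$ gives a grading over $\N_0$: brackets within $\md$ land in $\mh^*$ because $\md$ is abelian, and $\ad^*_\mh$ annihilates $\mh^*$ because $\mh$ is one-dimensional. Its degree-zero part is the one-dimensional, hence abelian, algebra $\mh$, which is \adsolitary (by Remark~\ref{rk:WWadsolitary} every linear map $\mh\to\mh^*$ is a derivation), so Proposition~\ref{prop:impliesadsolitary} yields the claim. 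The two four-dimensional algebras $\md_1$, $\md_2$ of Lemma~\ref{lm:solv4dim} are exactly such double extensions (with $\mh=\Span{e_1}$, middle factor $\Span{e_2,e_3}$, and $\mh^*=\Span{e_4}$), and so they are \adsolitary by the same grading.

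With this in hand the structural cases become routine. When $\mq$ is a double extension $\mh\oplus\md\oplus\mh^*$ with $\md$ abelian and $\dim\mh=1$, it is \adsolitary by the above. When $\mq=\K^2\oplus\md$ is an orthogonal direct sum, the factor $\md$ is solvable of dimension $\leq 4$ and inherits an ad-invariant metric, so Lemma~\ref{lm:solv4dim} identifies $\md$ as abelian or as $\md_1$ or $\md_2$; all of these are \adsolitary, as is $\K^2$, whence Proposition~\ref{prop:reducibleadsolitary} gives that $\mq$ is \adsolitary. Finally, if $\mq=\mpp^\R$ with $\mpp$ complex solvable, then $\dim_\C\mpp\leq 3$ forces $\mpp$ to be abelian by Lemma~\ref{lm:solv4dim}, so $\mq$ is abelian and \adsolitary. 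This gives the contradiction and proves that every solvable Lie algebra of dimension $\leq 6$ is \adsolitary; the ``in particular'' statement then follows from Theorem~\ref{thm:rigiffTrig}. The only genuinely delicate point is checking that the $0$/$1$/$2$ assignment really is a grading of the double extension — it is precisely here that the hypotheses ``$\md$ abelian'' and ``$\dim\mh=1$'' coming from Proposition~\ref{pro:dim6solv} are essential — whereas the reductions and the complex-form branch are bookkeeping.
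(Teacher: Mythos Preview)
Your proof is correct and follows essentially the same strategy as the paper: reduce via Proposition~\ref{prop:quotientweak} to a quotient $\mq$ carrying a nonsolitary ad-invariant metric, invoke Proposition~\ref{pro:dim6solv} for the structure of $\mq$, and use the $0/1/2$ grading on the double extension $\mh\oplus\md\oplus\mh^*$. The only difference is packaging: the paper first proves the metric on $\mq$ is \rigid\ (via Lemma~\ref{lemma:uniquenesslemma} and Proposition~\ref{prop:decomposablesolitary}) and then derives \adsolitary, whereas you prove \adsolitary\ directly throughout (via Propositions~\ref{prop:impliesadsolitary} and~\ref{prop:reducibleadsolitary}); Theorem~\ref{thm:rigiffTrig} makes these interchangeable. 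Your separate $\mpp^\R$ branch is harmless but redundant, since such a $\mq$ still has an ad-invariant metric and is already covered by Proposition~\ref{pro:dim6solv}.
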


\begin{proof}
We first prove the second part of the statement.

By Proposition~\ref{pro:dim6solv}, every solvable Lie algebra $\mg$ of dimension $\leq 6$ admitting ad-invariant metrics  is either an orthogonal direct sum $\K^2\oplus \md$ with $\md$ solvable or isometric to a double extension of an abelian Lie algebra.

Suppose that $\mg$ can be written as an orthogonal direct sum $\K^2\oplus \md$. Then $\md$ is either abelian or 4-dimensional and listed in Lemma~\ref{lm:solv4dim}. Straightforward computations show that $\md$ is solitary in either case. Since $\K^2$ is solitary, $\mg$ is solitary too by Proposition~\ref{prop:decomposablesolitary}.

Suppose that $\mg$ is isometric to a double extension $(\mh\oplus\md\oplus\mh^*,g)$, where $\md$ is abelian and $\mh$ is $1$-dimensional. Consider a complementary subspace $\mb_1$ of $\mh^*$  inside the ideal $\md\oplus\mh^*$, and denote $\mb_0=\mh$, $\mb_2=\mh^*$. It is easy to check that $\mg=\mb_0\oplus\mb_1\oplus\mb_2$ is a grading of $\mg$ which verifies~\eqref{eqn:co_gradation} with respect to $g$ and taking $l=2$. Furthermore, since $\dim\mh=1$, every map $\mb_0\to\mb_2$ is a derivation, so Lemma~\ref{lemma:uniquenesslemma} applies.

Now consider an arbitrary solvable Lie algebra $\g$ of dimension $\leq 6$. By Proposition~\ref{prop:quotientweak}, a solvable Lie algebra of dimension $\leq6$ which is not $T^*$-solitary has a quotient which has a nonsolitary ad-invariant metric, which is also a solvable Lie algebra of dimension $\leq6$; by the first part of the proof, this is absurd.
\end{proof}

Real nilpotent Lie algebras of dimension $\leq 10$ admitting ad-invariant metrics are classified by Kath in \cite{Kath:NilpotentSmallDim}. It is easy to show that all these Lie algebras are actually nice. Indeed, we report each one of them in Table~\ref{tbl:kath} by giving the structure coefficients with respect to a nice basis, and the parameters for their construction as listed by Kath; notice that we use the updated version of  \cite[Theorem 1.1]{Kath:NilpotentSmallDim} available at \texttt{arXiv:math/0505688v2}. In addition, one can check that in every case the ad-invariant metric is $\sigma$-diagonal.
The bases we give are obtained from Kath's list by reordering the elements, except in the cases~\ref{Numb:Table1Case4a24}) and~\ref{Numb:Table1Case4b31}), where a more complicated change of basis is needed.

Since Kath classifies metric Lie algebras, whilst Table~\ref{tbl:kath} only lists Lie algebras, a single entry $\g$ in our table may correspond to more entries in Kath's list.

{\setlength{\tabcolsep}{2pt}
\begin{table}[thp]
\centering
{ \caption{\label{tbl:kath} Nilpotent Lie algebras of dimension $\leq 10$ admitting an ad-invariant metric, as classified by Kath in \cite{Kath:NilpotentSmallDim}, written in terms of a nice basis such that the ad-invariant metric is $\sigma$-diagonal.}
\begin{small}
\begin{tabular}{ >{\stepcounter{rowno}\therowno)}r  L c L}
\toprule
\multicolumn{1}{r}{} & \textnormal{Lie algebra } \g & \multicolumn{2}{l}{Cases in Kath's list} \\
\midrule
& 0,0,e^{25},-e^{15},e^{12} & 8 &  \\ 
& 0,0,0,e^{23},-e^{13},e^{12} & 7(a) &  \\ 
& 0,0,e^{12},e^{26}+e^{37},-e^{16},-e^{17},e^{13} & 6(a) &  \\ 
& 0,0,e^{12},e^{26}+e^{37},-e^{16}+e^{38},-e^{17}-e^{28},e^{13},e^{23} & 6(b) & \lie{a}\in\{\R^2,\R^{0,2}\} \\ 
& 0,0,e^{12},e^{26}-e^{37},-e^{16}+e^{38},e^{17}-e^{28},e^{13},e^{23} & 6(b) & \lie{a}=\R^{1,1} \\
& 0,0,0,e^{27}+e^{38},-e^{17},-e^{18},e^{12},e^{13} & 7(b) & \lie{a}\in\{\R^2,\R^{0,2}\} \\
& 0,0,0,-e^{27}+e^{38},e^{17},-e^{18},e^{12},e^{13} & 7(b) & \lie{a}=\R^{1,1} \\
& 0,0,e^{14},e^{12},e^{47}+e^{28}+e^{39},-e^{18},-e^{19},-e^{17},e^{13} & 3(a) & \gamma=0 \\
\multicolumn{1}{r}{\multirow{2}{*}{\stepcounter{rowno}\therowno)}} & \multicolumn{1}{L}{0,0,e^{14},e^{12},e^{47}+e^{28}+e^{39},} & \multirow{2}{*}{3(a)} & \multirow{2}{*}{$\gamma=\sigma^2\wedge\sigma^Y\wedge\sigma^Z$} \\
\multicolumn{1}{r}{} &
\multicolumn{1}{R}{-e^{18}+e^{34},-e^{19}-e^{24},-e^{17}+e^{23},e^{13}} & & \\
& 0,0,e^{12},0,e^{27}+e^{39},-e^{17}+e^{34},-e^{19}-e^{24},e^{23},e^{13} & 4(c) & \\
& 0,0,0,0,e^{39},e^{34},-e^{19}-e^{24},e^{23},e^{13} & 5(c) & \\
& 0,0,0,e^{27}+e^{38},-e^{17}+e^{39},-e^{18}-e^{29},e^{12},e^{13},e^{23} & 7(c) & \lie{a}=\R^3 \text{ or } \R^{0,3} \\
& 0,0,0,-e^{27}+e^{38},e^{17}+e^{39},-e^{18}-e^{29},e^{12},e^{13},e^{23} & 7(c) & \lie{a}=\R^{2,1} \text{ or } \R^{1,2} \\
& 0,0,0,0,0,e^{25},-e^{15},e^{45},-e^{35},e^{12}+e^{34} & 1 & \\
& 0,0,0,e^{12},e^{13},e^{29}+e^{3,10}+e^{45},-e^{19},-e^{1,10},-e^{15},e^{14} & 2 & \gamma=\sigma^1\wedge\sigma^Y\wedge\sigma^Z \\
\multicolumn{1}{r}{\multirow{2}{*}{\stepcounter{rowno}\therowno)}} & \multicolumn{1}{L}{0,0,0,e^{12},e^{13},} & \multirow{2}{*}{2} & \multirow{2}{*}{$\gamma=\sigma^1\!\wedge\!\sigma^Y\!\wedge\!\sigma^Z\!+\!\sigma^2\!\wedge\!\sigma^3\!\wedge\!\sigma^Z$} \\
\multicolumn{1}{r}{} & \multicolumn{1}{R}{e^{29}+e^{3,10}+e^{45},-e^{19}+e^{35},-e^{1,10}-e^{25},-e^{15},e^{14}+e^{23}} & & \\
\multicolumn{1}{r}{\multirow{2}{*}{\stepcounter{rowno}\therowno)}} & \multicolumn{1}{L}{0,0,e^{14},e^{12},e^{47}+e^{28}+e^{39},} & \multirow{2}{*}{3(b)} & \\
\multicolumn{1}{r}{} & \multicolumn{1}{R}{-e^{18}+e^{4,10},-e^{19},-e^{17}-e^{2,10},e^{13},e^{24}} & & \\
\multicolumn{1}{r}{\multirow{2}{*}{\stepcounter{rowno}\therowno)}} & \multicolumn{1}{L}{0,0,e^{14},e^{12},e^{47}+e^{28}-e^{39},} & \multirow{2}{*}{3(c)} & \multicolumn{1}{L}{\alpha\!\in\!\{\sigma^1\!\wedge\!\sigma^Y\!\otimes\! A_1\!+\!\sigma^2\!\wedge\!\sigma^Z\!\otimes\! A_2,}
\\
\multicolumn{1}{r}{} & \multicolumn{1}{R}{-e^{18}+e^{4,10},e^{19},-e^{17}-e^{2,10},e^{13},e^{24}} & & \multicolumn{1}{R}{\sigma^2\!\wedge\!\sigma^Z\!\otimes\! A_1\!+\!\sigma^1\!\wedge\!\sigma^Y\!\otimes\! A_2\}}\\
\multicolumn{1}{r}{\multirow{2}{*}{\stepcounter{rowno}\therowno)}} & \multicolumn{1}{L}{0,0,e^{12},0,e^{27}+e^{39}+e^{4,10},-e^{17}+e^{49}+e^{3,10},} & \multirow{2}{*}{4(a)} & \multirow{2}{*}{$(\alpha, \gamma) = (\alpha_1,0)$} \\
\multicolumn{1}{r}{} & \multicolumn{1}{R}{-e^{19}-e^{2,10},-e^{29}-e^{1,10},e^{13}+e^{24},e^{14}+e^{23}} & & \\
& 0,0,e^{12},0,e^{27}+e^{39}+e^{4,10},-e^{17},-e^{19},-e^{1,10},e^{13},e^{14} & 4(a) & (\alpha,\gamma)=(\alpha_5,0) \\
\multicolumn{1}{r}{\multirow{2}{*}{\stepcounter{rowno}\therowno)}} & \multicolumn{1}{L}{0,0,e^{12},0,e^{27}+e^{39}+e^{4,10},} & \multirow{2}{*}{4(a)} & \multirow{2}{*}{$(\alpha,\gamma)=(\alpha_5,\gamma_0)$} \\
\multicolumn{1}{r}{} & \multicolumn{1}{R}{-e^{17}+e^{34},-e^{19}-e^{24},e^{23}-e^{1,10},e^{13},e^{14}} & & \\
& 0,0,e^{12},0,e^{27}+e^{39},-e^{17}+e^{4,10},-e^{19},-e^{2,10},e^{13},e^{24} & 4(a) & (\alpha,\gamma)=(\alpha_6,0)\\
\multicolumn{1}{r}{\refstepcounter{rowno}\therowno\label{Numb:Table1Case4a24})} & 0,0,0,e^{12},e^{13},e^{24},e^{15}, e^{26},e^{28}-e^{35},e^{18}+e^{46} & 4(a) & (\alpha,\gamma)=(\alpha_6,\gamma_0) \\
\multicolumn{1}{r}{\multirow{2}{*}{\stepcounter{rowno}\therowno)}} & \multicolumn{1}{L}{0,0,e^{12},0,e^{27}+e^{3,10}+e^{49},} & \multirow{2}{*}{4(b)} & \multirow{2}{*}{$(\alpha,\gamma)=(\alpha_1,0)$} \\
\multicolumn{1}{r}{} & \multicolumn{1}{R}{-e^{17}+e^{4,10}+e^{39},-e^{1,10}-e^{29},-e^{2,10}-e^{19},e^{13}+e^{24},e^{14}+e^{23}} & & \\
\multicolumn{1}{r}{\multirow{2}{*}{\stepcounter{rowno}\therowno)}} & \multicolumn{1}{L}{0,0,e^{12},0,e^{27}+e^{3,10}+e^{49},} & \multirow{2}{*}{4(b)} & \multirow{2}{*}{$(\alpha,\gamma)=(\alpha_2,0)$} \\
\multicolumn{1}{r}{} & \multicolumn{1}{R}{-e^{17}-e^{4,10}+e^{39},-e^{1,10}-e^{29},e^{2,10}-e^{19},e^{13}-e^{24},e^{14}+e^{23}} & & \\
\multicolumn{1}{r}{\multirow{2}{*}{\stepcounter{rowno}\therowno)}} & \multicolumn{1}{L}{0,0,e^{12},0,e^{27}+e^{3,10}+e^{49},} & \multirow{2}{*}{4(b)} & \multirow{2}{*}{$(\alpha,\gamma)=(\alpha_3,0)$} \\
\multicolumn{1}{r}{} & \multicolumn{1}{R}{-e^{17}+e^{39},-e^{1,10}-e^{29},-e^{19},e^{13},e^{14}+e^{23}} & & \\
& 0,0,e^{12},0,e^{27}-e^{39}+e^{4,10},-e^{17},e^{19},-e^{1,10},e^{13},e^{14} & 4(b) & (\alpha,\gamma)\in\{(\alpha_5,0),(\alpha_5',0)\} \\
\multicolumn{1}{r}{\multirow{2}{*}{\stepcounter{rowno}\therowno)}} & \multicolumn{1}{L}{0,0,e^{12},0,e^{27}-e^{39}+e^{4,10},} & \multirow{2}{*}{4(b)} & \multirow{2}{*}{$(\alpha,\gamma)\in\{(\alpha_5,\gamma_0),(\alpha_5',\gamma_0)\}$} \\
\multicolumn{1}{r}{} & \multicolumn{1}{R}{-e^{17}+e^{34},e^{19}-e^{24},-e^{1,10}+e^{23},e^{13},e^{14}} & & \\
& 0,0,e^{12},0,e^{27}-e^{39},-e^{17}+e^{4,10},e^{19},-e^{2,10},e^{13},e^{24} & 4(b) & (\alpha,\gamma)\in\{(\alpha_6,0),(\alpha_6',0)\} \\
\multicolumn{1}{r}{\refstepcounter{rowno}\therowno\label{Numb:Table1Case4b31})} & 0,0,0,e^{12},e^{13},e^{24},e^{15}, e^{26},e^{28}+e^{35},e^{18}+e^{46} & 4(b) & (\alpha,\gamma)\in\{(\alpha_6,\gamma_0),(\alpha_6',\gamma_0)\} \\
\multicolumn{1}{r}{\multirow{2}{*}{\stepcounter{rowno}\therowno)}} & \multicolumn{1}{L}{0,0,0,0,e^{39}+e^{4,10},} & \multirow{2}{*}{5(a)} & \multirow{2}{*}{$\alpha=\alpha_1$} \\
\multicolumn{1}{r}{} & \multicolumn{1}{R}{e^{49}+e^{3,10},-e^{19}-e^{2,10},-e^{29}-e^{1,10},e^{13}+e^{24},e^{14}+e^{23}} & & \\
&
0,0,0,0,e^{39}+e^{24},e^{3,10}-e^{14},-e^{19}-e^{2,10},e^{12},e^{13},e^{23} & 5(a) & \alpha=\alpha_4 \\
\multicolumn{1}{r}{\multirow{2}{*}{\stepcounter{rowno}\therowno)}} & \multicolumn{1}{L}{0,0,0,0,e^{3,10}+e^{39},e^{4,10}+e^{49},} & \multirow{2}{*}{5(b)} & \multirow{2}{*}{$\alpha=\alpha_1$} \\
\multicolumn{1}{r}{} & \multicolumn{1}{R}{-e^{1,10}-e^{29},-e^{2,10}-e^{19},e^{13}+e^{24},e^{23}+e^{14}}\\
\multicolumn{1}{r}{\multirow{2}{*}{\stepcounter{rowno}\therowno)}} & \multicolumn{1}{L}{0,0,0,0,e^{3,10}+e^{49},} & \multirow{2}{*}{5(b)} & \multirow{2}{*}{$\alpha=\alpha_2$} \\
\multicolumn{1}{r}{} & \multicolumn{1}{R}{e^{39}-e^{4,10},-e^{1,10}-e^{29},e^{2,10}-e^{19},e^{13}-e^{24},e^{14}+e^{23}} & & \\
& 0,0,0,0,e^{3,10}+e^{49},e^{39},-e^{1,10}-e^{29},-e^{19},e^{13},e^{23}+e^{14} & 5(b) & \alpha=\alpha_3 \\
& 0,0,0,0,e^{3,10}+e^{24},e^{39}-e^{14},-e^{1,10}-e^{29},e^{12},e^{13},e^{23} & 5(b) & \alpha=\alpha_4 \\
\bottomrule
\end{tabular}
\end{small}
}
\end{table}
}

\begin{proposition}\label{cor:nilpleq10solitary}
Every real nilpotent Lie algebra of dimension $\leq 10$ is \adsolitary. In particular, any ad-invariant metric on a real nilpotent Lie algebra of dimension $\leq10$ is \rigid.
\end{proposition}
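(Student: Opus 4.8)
The plan is to follow the template of Proposition~\ref{prop:solvablesolitary}: first show that every ad-invariant metric on a real nilpotent Lie algebra of dimension $\leq 10$ is \rigid, and then upgrade this to the full \adsolitary statement through the inductive mechanism of Proposition~\ref{prop:quotientweak}. For the upgrade I would argue by induction on $n=\dim\g$: assuming every nilpotent Lie algebra of dimension $<n$ is \adsolitary, let $\g$ be nilpotent with $\dim\g=n\leq 10$ and suppose it is not \adsolitary. Proposition~\ref{prop:quotientweak} then yields an ideal $U\subset\g$ such that $\g/U$ carries a nonsolitary ad-invariant metric, and $\g/U$ is again nilpotent of dimension $\leq 10$. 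If $U\neq0$, then $\dim(\g/U)<n$, so $\g/U$ is \adsolitary by the inductive hypothesis; since it admits an ad-invariant metric, that metric is \rigid by Theorem~\ref{thm:rigiffTrig}, contradicting nonsolitarity. The only surviving possibility is $U=0$, in which case $\g$ itself admits a nonsolitary ad-invariant metric, and ruling this out is precisely the first part of the statement.

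To prove that first part I would invoke Kath's classification, recorded in Table~\ref{tbl:kath}: every real nilpotent Lie algebra of dimension $\leq 10$ admitting an ad-invariant metric appears there in a nice basis for which the metric is $\sigma$-diagonal. After reducing to irreducible summands if necessary (Proposition~\ref{prop:decomposablesolitary}), so that Corollary~\ref{cor:metricNikIsNik} identifies the metric Nikolayevsky derivation with the ordinary Nikolayevsky derivation $\tilde N$, I observe that $\tilde N$ is diagonal in the nice basis and its eigenvalues are read off by solving the rational linear system attached to the root matrix $M_\Delta$. One then checks, entry by entry, that these eigenvalues are nonnegative.

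The conclusion follows from a dichotomy on the eigenvalues of $\tilde N$. If all eigenvalues are strictly positive, Theorem~\ref{thm:uniquenesstheorem} applies and the metric is \rigid. If $\tilde N$ is nonzero and singular with nonnegative eigenvalues, Proposition~\ref{prop:nonnegativeimpliesextension} exhibits $\g$ isometrically as a double extension $\mh\oplus\md\oplus\mh^*$ with $\mh=\ker\tilde N$; by the remark following that proposition, $\g$ is \rigid if and only if $\mh$ is \adsolitary. Since $\tilde N\neq0$ is singular, $\mh=\ker\tilde N$ is a nonzero proper nilpotent subalgebra of dimension $<n$, hence \adsolitary by the inductive hypothesis, and $\g$ is \rigid as required. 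Should an entry have $\tilde N=0$, every ad-invariant symmetric $\phi\colon\g\to\g^*$ would have to vanish on $\g'$, so that Remark~\ref{rk:WWadsolitary} would give \adsolitary directly; this degenerate case is excluded by inspection of the table.

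I expect the main obstacle to be the verification — necessarily case-by-case, and in practice computer-assisted — that all the Lie algebras of Table~\ref{tbl:kath} indeed have nonnegative Nikolayevsky eigenvalues and fall into the above dichotomy, with particular care for the two entries (\ref{Numb:Table1Case4a24} and \ref{Numb:Table1Case4b31}) requiring a nontrivial change of basis. Conceptually, however, all the work is carried by the alternative between the strictly positive case (Theorem~\ref{thm:uniquenesstheorem}) and the nonnegative-singular case (Proposition~\ref{prop:nonnegativeimpliesextension}), with the kernel $\ker\tilde N$ always pushed to a strictly lower dimension where the induction closes the argument.
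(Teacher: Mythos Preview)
Your approach is essentially the paper's: reduce via Proposition~\ref{prop:quotientweak} to showing that every ad-invariant metric on a nilpotent Lie algebra of dimension $\leq 10$ is \rigid, then use Kath's classification, nice bases, Corollary~\ref{cor:metricNikIsNik}, and Theorem~\ref{thm:uniquenesstheorem}. The paper's proof is simpler because the case-by-case computation shows that the Nikolayevsky derivation has \emph{strictly positive} eigenvalues for every entry of Table~\ref{tbl:kath}, so your nonnegative-singular branch (via Proposition~\ref{prop:nonnegativeimpliesextension}) and the $\tilde N=0$ fallback never arise, and no inductive bookkeeping is needed.
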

\begin{proof}
Arguing as in the proof of Proposition~\ref{prop:solvablesolitary}, we only need to prove the second part of the statement and apply  Proposition~\ref{prop:quotientweak} to complete the proof.

Every nilpotent Lie algebra of dimension $\leq 10$ with an ad-invariant metric has a nice basis such that the metric is $\sigma$-diagonal, as listed in Table~\ref{tbl:kath}. By Corollary~\ref{cor:metricNikIsNik}, the metric Nikolayevsky derivation coincides with the Nikolayevsky derivation in each case. On a nice Lie algebra, computing the Nikolayevsky derivation is straightforward, and we see that in each case all the eigenvalues of the Nikolayevsky derivation are positive, so Theorem~\ref{thm:uniquenesstheorem} applies.
\end{proof}

\begin{remark}
We do not know whether there exist nilpotent Lie algebras of dimension $11$ which are not $T^*$-solitary. However, Example~\ref{ex:CharacteristicallyNilpotentDIM12} shows that not all $12$-dimensional nilpotent Lie algebras are $T^*$-solitary.
\end{remark}

\FloatBarrier

\bibliographystyle{plain}

\bibliography{UniquenessAd.bib}

\begin{thebibliography}{10}

\bibitem{AC01}
J.~M. {Ancochea} and R.~{Campoamor}.
\newblock {Characteristically nilpotent Lie algebras: A survey}.
\newblock {\em {Extracta Math.}}, 16(2):153--210, 2001.

\bibitem{As79}
V.~V. {Astrakhantsev}.
\newblock {Decomposability of metrizable Lie algebras}.
\newblock {\em {Funct. Anal. Appl.}}, 12:210--212, 1979.

\bibitem{BaBe07}
I.~{Bajo} and S.~{Benayadi}.
\newblock {Lie algebras with quadratic dimension equal to 2}.
\newblock {\em {J. Pure Appl. Algebra}}, 209(3):725--737, 2007.

\bibitem{Bagl}
O.~{Baues} and W.~{Globke}.
\newblock {Rigidity of compact pseudo-Riemannian homogeneous spaces for
  solvable Lie groups}.
\newblock {\em {Int. Math. Res. Not.}}, 2018(10):3199--3223, 2018.

\bibitem{BaumKath}
H.~Baum and I.~Kath.
\newblock Doubly extended {L}ie groups---curvature, holonomy and parallel
  spinors.
\newblock {\em Differential Geom. Appl.}, 19(3):253--280, 2003.

\bibitem{Bo97}
M.~Bordemann.
\newblock Nondegenerate invariant bilinear forms on nonassociative algebras.
\newblock {\em Acta Math. Univ. Comenian. (N.S.)}, 66(2):151--201, 1997.

\bibitem{ContiDelBarcoRossi}
D.~Conti, V.~del Barco, and F.~A. Rossi.
\newblock {D}iagram involutions and homogeneous {R}icci-flat metrics.
\newblock {\em Manuscripta Math.}, 2020.

\bibitem{ContiRossi:Construction}
D.~Conti and F.~A. Rossi.
\newblock {C}onstruction of nice nilpotent {L}ie groups.
\newblock {\em J. Algebra}, 525:311--340, 2019.

\bibitem{ContiRossi:RicciFlat}
D.~Conti and F.~A. Rossi.
\newblock Ricci-flat and {E}instein pseudoriemannian nilmanifolds.
\newblock {\em Complex Manifolds}, 6(1):170--193, 2019.

\bibitem{dBO12}
V.~{del Barco} and G.~P. {Ovando}.
\newblock {Free nilpotent Lie algebras admitting ad-invariant metrics}.
\newblock {\em {J. Algebra}}, 366:205--216, 2012.

\bibitem{dBOV}
V.~{del Barco}, G.~P. {Ovando}, and F.~{Vittone}.
\newblock {On the isometry groups of invariant Lorentzian metrics on the
  Heisenberg group}.
\newblock {\em {Mediterr. J. Math.}}, 11(1):137--153, 2014.

\bibitem{FavreSantharoubane:adInvariant}
G.~Favre and L.~J. Santharoubane.
\newblock Symmetric, invariant, nondegenerate bilinear form on a {L}ie algebra.
\newblock {\em J. Algebra}, 105(2):451--464, 1987.

\bibitem{FiSt96}
J.~M. {Figueroa-O'Farrill} and S.~{Stanciu}.
\newblock {On the structure of symmetric self-dual Lie algebras}.
\newblock {\em {J. Math. Phys.}}, 37(8):4121--4134, 1996.

\bibitem{FisherGrayHydon}
D.~J. Fisher, R.~J. Gray, and P.~E. Hydon.
\newblock Automorphisms of real {L}ie algebras of dimension five or less.
\newblock {\em J. Phys. A}, 46(22):225204, 18, 2013.

\bibitem{Humphreys}
J.~E. Humphreys.
\newblock {\em Introduction to {L}ie algebras and representation theory},
  volume~9 of {\em Graduate Texts in Mathematics}.
\newblock Springer-Verlag, New York-Berlin, 1978.
\newblock Second printing, revised.

\bibitem{Karpi90}
G.~Karpilovsky.
\newblock {\em Frobenius and symmetric algebras}.
\newblock Springer Netherlands, Dordrecht, 1990.

\bibitem{Kath:NilpotentSmallDim}
I.~Kath.
\newblock Nilpotent metric {L}ie algebras of small dimension.
\newblock {\em J. Lie Theory}, 17(1):41--61, 2007.

\bibitem{Ka20}
I.~Kath.
\newblock {Existence of Cocompact Lattices in Lie Groups With a Bi-invariant
  Metric of Index 2}.
\newblock {\em {Int. Math. Res. Not.}}, 03 2020.

\bibitem{KathOlbrich:Metric}
I.~Kath and M.~Olbrich.
\newblock Metric {L}ie algebras with maximal isotropic centre.
\newblock {\em Math. Z.}, 246(1-2):23--53, 2004.

\bibitem{KO08}
I.~{Kath} and M.~{Olbrich}.
\newblock {The classification problem for pseudo-Riemannian symmetric spaces}.
\newblock In {\em {Recent developments in pseudo-Riemannian geometry}}, pages
  1--52. Z\"{u}rich: European Mathematical Society, 2008.

\bibitem{Kos56}
B.~{Kostant}.
\newblock {On differential geometry and homogeneous spaces. II}.
\newblock {\em {Proc. Natl. Acad. Sci. USA}}, 42:354--357, 1956.

\bibitem{LauretWill:EinsteinSolvmanifolds}
J.~Lauret and C.~Will.
\newblock Einstein solvmanifolds: existence and non-existence questions.
\newblock {\em Math. Ann.}, 350(1):199--225, 2011.

\bibitem{MedinaRevoy}
A.~Medina and P.~Revoy.
\newblock {Alg\`{e}bres de Lie et produit scalaire invariant.}
\newblock {\em {Ann. Sci. \'{E}cole Norm. Sup. (4)}}, 18:553--561, 1985.

\bibitem{MR85}
A.~Medina and P.~Revoy.
\newblock Les groupes oscillateurs et leurs r\'{e}seaux.
\newblock {\em Manuscripta Math.}, 52(1-3):81--95, 1985.

\bibitem{MeRe93}
A.~{Medina} and P.~{Revoy}.
\newblock Alg\`{e}bres de lie orthogonales modules orthogonaux.
\newblock {\em {Comm. Algebra}}, 21(7):2295--2315, 1993.

\bibitem{Mostow}
G.~D. Mostow.
\newblock Fully reducible subgroups of algebraic groups.
\newblock {\em Amer. J. Math.}, 78:200--221, 1956.

\bibitem{Nikolayevsky}
Y.~Nikolayevsky.
\newblock Einstein solvmanifolds and the pre-{E}instein derivation.
\newblock {\em Trans. Amer. Math. Soc.}, 363(8):3935--3958, 2011.

\bibitem{Ov11}
G.~P. {Ovando}.
\newblock {Naturally reductive pseudo-Riemannian spaces}.
\newblock {\em {J. Geom. Phys.}}, 61(1):157--171, 2011.

\bibitem{Ovandosurvey}
G.~P. Ovando.
\newblock {Lie algebras with ad-invariant metrics. A survey - guide.}
\newblock {\em {Rend. Semin. Mat., Univ. Politec. Torino}}, 74(1):243--268,
  2016.

\bibitem{Samelson}
H.~Samelson.
\newblock {\em Notes on {L}ie algebras}.
\newblock Universitext. Springer-Verlag, New York, second edition, 1990.

\bibitem{Will:RankOne}
C.~Will.
\newblock Rank-one {E}instein solvmanifolds of dimension 7.
\newblock {\em Differential Geom. Appl.}, 19(3):307--318, 2003.

\bibitem{ZhZh01}
F.~{Zhu} and L.~{Zhu}.
\newblock {The uniqueness of the decomposition of quadratic Lie algebras}.
\newblock {\em {Commun. Algebra}}, 29(11):5145--5154, 2001.

\end{thebibliography}

\small\noindent D.~Conti and F.~A.~Rossi: Dipartimento di Matematica e Applicazioni, Universit\`a di Milano Bicocca, via Cozzi 55, 20125 Milano, Italy.\\
\texttt{diego.conti@unimib.it}\\
\texttt{federico.rossi@unimib.it}\medskip

\small\noindent V.~del Barco: IMECC - Universidade Estadual de Campinas, Rua Sergio Buarque de Holanda, 651, Cidade Universitaria Zeferino Vaz, 13083-859, Campinas, São Paulo, Brazil and CONICET, Argentina.\\
\texttt{delbarc@ime.unicamp.br}

\end{document}